
%

\documentclass[preprint]{imsart}
\usepackage[reqno]{amsmath}
\usepackage{ogonek}
\usepackage{amsthm,amsfonts,amssymb}
\usepackage[usenames,dvipsnames]{xcolor}
\usepackage{graphicx}
\usepackage{subfigure}
\usepackage{url}
\RequirePackage{natbib}
\bibliographystyle{imsart-nameyear}
\RequirePackage[colorlinks,citecolor=blue,urlcolor=blue]{hyperref}
\def\R{\mathbb R}
\def\Cov{{\rm {cov}}}
\def\Z{\mathbb Z}
\def \E{{\rm {E}}}

\def \P{\Pr}
\def \mb{\mathbf}
\newcommand{\bb}{\boldsymbol}
\newtheorem{theorem}{Theorem}[section]
\newtheorem{lemma}[theorem]{Lemma}

\newtheorem{proposition}[theorem]{Proposition}
\theoremstyle{definition}

\theoremstyle{remark}
\newtheorem{remark}[theorem]{Remark}
\allowdisplaybreaks
\numberwithin{equation}{section}


\begin{document}
\begin{frontmatter}

\title{Joint Asymptotics for Estimating the Fractal Indices of Bivariate
Gaussian Processes\thanks{Research supported in part by NSF grants DMS-1612885 and DMS-1607089.}}

\runtitle{Joint Asymptotics for Bivariate Fractal Indices}


\author{\fnms{Yuzhen} \snm{Zhou}\thanksref{a}\ead[label=e1]{yuzhenzhou@unl.edu}}
\and
\author{\fnms{Yimin} \snm{Xiao}\thanksref{b}\ead[label=e2] {xiao@stt.msu.edu}}
\address[a]{University of Nebraska-Lincoln \printead{e1}}
\address[b]{Michigan State University \printead{e2}}

\runauthor{Y. Zhou and Y. Xiao}
\begin{abstract}
Multivariate (or vector-valued) processes  are important for modeling 
multiple variables. The fractal indices of the components of the underlying 
multivariate process play a key role in characterizing the dependence 
structures and statistical properties of the multivariate process. 

In this paper, under the infill asymptotics framework, we establish
joint asymptotic results for the increment-based estimators
of bivariate fractal indices. Our main results  quantitatively describe
the effect of the cross-dependence structure on the performance 
of the estimators.
\end{abstract}

\begin{keyword}
\kwd{Fractal Indices}
\kwd{Bivariate Gaussian Process}
\kwd{Bivariate Mat\'{e}rn Field}
\kwd{Joint Asymptotics}
\end{keyword}
\end{frontmatter}

\section{Introduction}
\label{Introduction}

The fractal index 
of a stochastic process is useful for measuring the roughness of its
sample paths (e.g., it determines the Hausdorff dimension of the trajectories
of the process), and it is an important parameter in geostatistical models.
The problem of estimating the fractal index of a real-valued Gaussian or
non-Gaussian process has attracted the attention of many authors in past decades. Hall and Wood \cite{Hall_Wood_1993} studied the asymptotic properties of
the box-counting estimator of the fractal index. Constantine and Hall 
\cite{Constantine_Hall_1994} constructed estimators of the effective fractal dimension 
based on the variogram. Kent and Wood \cite{Kent_Wood_1997}
developed increment-based estimators for stationary
Gaussian processes on $\R$, which can achieve improved performance under infill
asymptotics (namely,  asymptotic properties of statistical procedures as the sampling
points grow dense in a fixed domain, see, e.g., \cite{Chen_Simpson_Ying_2000, Cressie_1993}).
Chan and Wood \cite{Chan_Wood_2000, Chan_Wood_2004} extended the method to a class 
of stationary Gaussian random fields defined on $\R^2$ and their transformations, which are 
non-Gaussian in general.  Zhu and Stein \cite{Zhu_Stein_2002}
expanded the work of Chan and Wood \cite{Chan_Wood_2000} by considering the fractional
Brownian surface. More recently, Coeurjolly \cite{Coeurjolly_2008} introduced a new
class of consistent estimators of the fractal dimension of locally self-similar
Gaussian processes on $\mathbb{R}$ using sample quantiles and derived 
the almost sure convergence and asymptotic normality for these estimators.
Bardet and Surgailis \cite{Bardet_Surgailis_2011} provided estimators of the fractal index based
on increment ratios for several classes of real-valued processes with rough
sample paths, including Gaussian processes, and studied their asymptotic properties.
Loh \cite{Loh_2015} constructed estimators from irregularly spaced data on
$\mathbb{R}^d$ with $d=1$ or $2$ via higher-order quadratic variations.
We refer to \cite{Gneiting_Sevcikova_Percival2012} and the references
therein for further information on various types of estimators and their
assessments.

In recent years, multivariate (or vector-valued) Gaussian processes and random fields
have become popular in modeling multivariate spatial datasets
(see, e.g., \cite{Gelfand_Diggle_Fuentes_Guttorp_2010, Wackernagel_2003}).
Several classes of multivariate spatial models were introduced in
\cite{Apanasovich_Genton_Sun_2012, Daley_Porcu_Bevilacqua_2015,  Du_Ma_2013,  Gneiting_Kleiber_Schlather2010,  Kleiber_Nychka_2012, Moreva_Martin_2016, Porcu_Daley_Buhmann_2013}. Two of the challenges in multivariate
modeling are to specify the cross-dependence structures and to quantify the
effect of the cross-dependence on the estimation and prediction performance.
We refer to \cite{Genton_2014_cross}  for an excellent review of the recent
developments in multivariate covariance functions. They also raised many
open questions and called for theoretical development of estimation and
prediction methodology in the multivariate context. To the best of our knowledge, 
only a few authors have worked in this direction; see, for example, \cite{Furrer_Bachoc_Du_2016, lim2008properties, pascual2006estimation, Ruiz_Porcu_2015, zhang2015doesn}. While with respect to focusing on estimating the fractal indices 
of a multivariate Gaussian process, we are only aware of the work by  Amblard
and Coeurjolly \cite{Amblard_Coeurjolly_2011}, in which they constructed estimators for the fractal indices 
of a class of multivariate fractional Brownian motions using discrete filtering techniques and studied 
their joint asymptotic distribution. By estimating the fractal index of each component separately, they found that 
the quality of these estimates was almost independent of the cross correlation of the multivariate fractional 
Brownian motion.

In this work, we consider a class of bivariate stationary Gaussian processes 
$\mathbf{X} \triangleq \{(X_1(t),X_2(t))^\top, t\in \mathbb{R}\}$ (the operator 
$(\cdot)^\top$ means the transpose of a vector or a matrix)  and study the 
joint asymptotic properties of the estimators for the fractal indices of the components
$X_1$ and $X_2$ under the infill asymptotics framework.  Our main purpose 
is to clarify the effect of cross covariance on the performance of the joint estimators.

More specifically, we assume that  $\mathbf{X} $ has mean
${\rm E}\mathbf{X}(t)=\mb 0$ and matrix-valued covariance function
\begin{equation}\label{Cov}
\mb C(t)=\left(\begin{array}{ll}
C_{11}(t)&C_{12}(t)\\
C_{21}(t)&C_{22}(t)
\end{array}
\right),
\end{equation}
where $C_{ij}(t):={\rm E}[X_i(s)X_j(s+t)],\ i=1,2.$
Further, we assume that the following conditions are satisfied
\begin{align}
\label{covariance assumption}
\begin{split}
C_{11}(t)&=\sigma_1^2-c_{11}|t|^{\alpha_{11}}+o(|t|^{\alpha_{11}}),\\
C_{22}(t)&=\sigma_2^2-c_{22}|t|^{\alpha_{22}}+o(|t|^{\alpha_{22}}),\\
C_{12}(t)&=C_{21}(t)=\rho \sigma_1\sigma_2(1-c_{12}
|t|^{\alpha_{12}}+o(|t|^{\alpha_{12}})),
\end{split}
\end{align}
where $\alpha_{11},\alpha_{22}\in (0,2)$, $\sigma_1,\sigma_2>0$,
$|\rho|\in (0,1)$ and $c_{11},\, c_{22},c_{12}>0$ are constants. Under the
assumption (\ref{covariance assumption}),  in order for (\ref{Cov})
to be a valid covariance function, it is necessary to impose some
restrictions on the parameters $(\alpha_{11},\alpha_{22}, \alpha_{12})$.
In this paper, we assume
\begin{equation}\label{Eq:valid}
\begin{split}
&\frac{\alpha_{11}+\alpha_{22}}{2} < \alpha_{12}, \ \ \hbox{ or } \\
&\frac{\alpha_{11}+\alpha_{22}}{2} = \alpha_{12} \ \ \hbox{ and }\  \
c_{12}^2 \rho^2 \sigma_1^2 \sigma_2^2 < c_{11} c_{22}.
\end{split}
\end{equation}
This is a mild assumption. See Appendix A for justification.

Henceforth, we refer to (\ref{covariance assumption}) and (\ref{Eq:valid})
as Condition (${\mb A1}$). Under the assumption \eqref{covariance assumption},
it is well known (see, e.g., \cite{Adler_1981}, Theorem $8.4.1$) that
the fractal dimensions of the trajectories of each component $X_1$ and
$X_2$ are given by
\[
\dim {\rm Gr} X_1 ([0, 1]) = 2- \frac{\alpha_{11}} 2, \quad \hbox{ a.s.}
\]
and
\[
\dim {\rm Gr} X_2 ([0, 1]) = 2- \frac{\alpha_{22}} 2, \quad \hbox{ a.s.},
\]
respectively. Above, for $ i \in\{ 1, 2\}$, ${\rm Gr} X_i([0, 1]) =
\{(t, X_i(t)): t \in [0, 1]\}$ is the trajectory (or graph set)
of the real-valued process $X_i = \{X_i(t), t \in \mathbb{R}\}$ over the
interval $[0, 1]$. A bivariate stationary Gaussian process $\mathbf{X}$ with 
matrix-valued covariance
function \eqref{Cov} that satisfies Condition (${\mb A1}$)
has richer fractal properties. For example, we consider  the trajectory  of
$\mathbf{X}$ on $[0, 1]$, which is ${\rm Gr}\mathbf{X}([0,1]) = \{(t, X_1(t),\,
X_2(t))^\top: t \in [0, 1]\} \subseteq \mathbb R^3$.  For notational convenience, we further assume
that $\alpha_{11} \le \alpha_{22}$
(otherwise we may relabel the components of $\mathbf{X}$). Then, we can
apply Theorem 2.1 in \cite{Xiao_1995} to show that, with probability 1,
\begin{equation}\label{Xiao95}
\begin{split}
\dim {\rm Gr} \mathbf{X}([0, 1]) &= {\rm {min}}\bigg\{\frac{2 + \alpha_{22}
- \alpha_{11}} {\alpha_{22}} , \,3 - \frac{\alpha_{11} + \alpha_{22}} 2\bigg\}\\
&= \left\{\begin{array}{ll}
\frac{2 + \alpha_{22} - \alpha_{11}} {\alpha_{22}}, \ \
&\hbox{ if }\, \alpha_{11} + \alpha_{22}\ge 2,\\
3 - \frac{\alpha_{11} + \alpha_{22}} 2, \ \ &\hbox{ if }\,
\alpha_{11} + \alpha_{22}< 2.
\end{array}
\right.
\end{split}
\end{equation}
This result shows that the indices $\alpha_{11}$ and $\alpha_{22}$
determine the fractal dimension of the trajectory of the bivariate
Gaussian process $\mathbf{X}$. Furthermore, one can characterize
many other fractal properties of $\mathbf{X}$ explicitly in terms
of these indices. See \cite{xiao2013recent} for a recent overview. Hence,
analogous to the univariate case, it is natural to call $(\alpha_{11},
\alpha_{22})$ the fractal indices of $\mathbf{X}$. For the reader's convenience,
we include a proof of \eqref{Xiao95} in Appendix B.

Although the parameters $\alpha_{11}$ and $\alpha_{22}$ can
be estimated separately from observations of the coordinate
processes $X_1$ and $X_2$, \eqref{Xiao95} suggests that, in doing so,
one might miss some important information about the structures of the
bivariate process $\mathbf{X}$. For example, although the estimator of
$\dim {\rm Gr} \mathbf{X}([0, 1])$ can be obtained by plugging the
estimators of $\alpha_{11}$ and $\alpha_{22}$ into \eqref{Xiao95}, say
$(\hat \alpha_{11}, \hat \alpha_{22})$, we cannot evaluate the estimation
efficiency without  the joint asymptotic properties of $(\hat \alpha_{11},
\hat \alpha_{22})$.  Hence, it is necessary to study these estimators jointly
and to quantify the effect of the cross-covariance on their performance.

In this paper, we consider the increment-based estimators of
$\alpha_{11}$ and $\alpha_{22}$, denoted by $\hat \alpha_{11}$ and
$\hat \alpha_{22}$, respectively, and study the bias, mean square
error matrix and joint asymptotic distribution under the infill
asymptotics framework. The main results are given in Theorems \ref{thm: Bias}
$\sim$ \ref{thm: Joint normality of alpha_11_22}. In particular,
we prove that $\sqrt{n}\hat \alpha_{11}$ and $\sqrt{n}\hat \alpha_{22}$
are asymptotically uncorrelated if $(\alpha_{11}
+\alpha_{22})/2<\alpha_{12}$, while they are asymptotically correlated if
$(\alpha_{11}+\alpha_{22})/2=\alpha_{12}$. Our results are
applicable to a wide class of bivariate Gaussian processes,  including the bivariate  
Mat\'{e}rn model introduced by Gneiting, Kleiber and Schlather \cite{Gneiting_Kleiber_Schlather2010}, 
the bivariate powered exponential model and  bivariate Cauchy model of Moreva and Schlather
 \cite{Moreva_Martin_2016}, and a class of bivariate models introduced by Du and Ma 
 \cite{Du_Ma_2013}.

This paper raises several open questions. First, the method of joint asymptotics developed 
in this paper and the recent work by Loh \cite{Loh_2015} on constructing estimators for the
univariate fractal index given irregularly spaced data make it possible to study the joint 
asymptotics in estimating bivariate fractal indices when data are observed irregularly 
on $\mathbb{R}^2$. This problem is interesting from both theoretical and application viewpoints, 
but it appears to be challenging. Second, the work by Ruiz-Medina and Porcu 
\cite{Ruiz_Porcu_2015}, which established conditions for the equivalence of Gaussian measures 
of multivariate random fields, makes it promising to generalize the consistency and asymptotic 
normality results of maximum likelihood estimators for a univariate random field to the case of 
multivariate Gaussian fields. The existing results in the univariate case were established under 
the assumption that the smoothness parameter is known (see, e.g., 
\cite {Du_Zhang_Mandrekar_2009, Kaufman_Schervish_Nychka_2008, Zhang_2004}). It 
would be interesting to study whether the asymptotic properties hold  in either the univariate or multivariate 
case while plugging in estimators of the smooth parameters.

The rest of this paper is organized as follows. We follow Chan and Wood 
\cite{Chan_Wood_2000} and Kent and Wood \cite{Kent_Wood_1997} and 
formulate the increment-based estimators for
$(\alpha_{11}, \alpha_{22})$ in Section \ref{sec: the increment-based estimators}.
In Section \ref{Asymp-prop}, we state the main results of the joint
asymptotics of the bivariate estimators. An application to the non-smooth
bivariate Mat\'{e}rn processes is given in Section
\ref{sec: an example_bivariate Matern field}. In Section \ref{sec: simulation},
we present a simulation study on the efficiency of the estimators. The
proofs of our main results are given in Section \ref{sec_proof of main results}.
Finally, some auxiliary results and their proofs are included in the Appendix.

We end the introduction with some notation. $\Z^+$ denotes the
set of all positive integers, and $\mathcal{B}(\R)$ is the collection
of all Borel sets on $\R$. For any real-valued sequences $\{a_n\}_{n=1}^\infty$,
$\{b_n\}_{n=1}^\infty$, $a_n \sim b_n$ means $\lim_{n\rightarrow \infty}
{b_n}/{a_n}=1$,  $a_n \gtrsim b_n$ means that there exists a constant $c>0$
such that  $a_n \geq c\, b_n$ for all $n$ sufficiently large and $a_n \asymp b_n$
means  $a_n \gtrsim b_n$ and  $b_n \gtrsim a_n$. Similar notation is 
used for functions of continuous variables.

An unspecified positive and finite constant will
be denoted by $C_0$. More specific constants are numbered as $C_1,
\, C_2, \ldots.$

\section{The increment-based estimators}
\label{sec: the increment-based estimators}
Assume that the values of the bivariate process  $\mb X$ are observed regularly on an  interval $I$, say $I =[0,1]$.
More specifically, we have $n$ pairs of observations $(\mb X(1/n), \ldots, \mb X(1))^\top$.
By applying the  increment-based method introduced by Kent and Wood      \cite{Kent_Wood_1997}
for estimating the fractal index of a real-valued locally self-similar
Gaussian process (see also \cite{Chan_Wood_2000, Chan_Wood_2004} for
further development),
we can estimate the fractal indices $(\alpha_{11}, \alpha_{22})$ of $\mb X$.
Our emphasis is on studying the joint asymptotic properties of the estimators.
In particular, we study the effect of cross-covariance on their joint performance.

Let $m \ge 2$ be a fixed integer. For each component $X_i,\ i=1,2$ and  
integer $u\in \{1,\ldots, m\}$, we define the dilated filtered discretized process
with second difference (see, e.g., \cite{Kent_Wood_1997}),
\begin{align*}
&Y_{n,i}^u(j):=n^{\frac{\alpha_{ii}}{2}}\left(X_i\Big(\frac{j-u}{n}\Big)-2X_i
\Big(\frac{j}{n}\Big)+X_i\Big(\frac{j+u}{n}\Big)\right),\ j=1,\ldots,n.
\end{align*}
Denote by $a_{-1}=1,a_0=-2,a_1=1$. $Y_{n,i}^u$ can be rewritten as
\begin{align*}
&Y_{n,i}^u(j)=n^{\frac{\alpha_{ii}}{2}}\sum_{k=-1}^1 a_kX_i\bigg(\frac{j+ku}{n}\bigg).
\end{align*}
As in Kent and Wood \cite{Kent_Wood_1997}, one can verify that, under 
(\ref{covariance assumption}), $Y_{n,i}^u(j)$ is a Gaussian random 
variable with mean 0, and its variance converges to 
$c_{ii}(8 - 2^{\alpha_{ii}+1})u^{\alpha_{ii}}$
(this follows from (\ref{sigma_n,ii^uv(h)2}) below).
Let $Z_{n,i}^u(j):=(Y_{n,i}^u(j))^2$ and  define
\begin{align} \label{Def:Z}
\bar Z_{n,i}^u:=\frac{1}{n}\sum_{j=1}^n Z_{n,i}^u(j).
\end{align}
For $i= 1, 2$, it follows from \cite{Kent_Wood_1997} that, under certain
regularity conditions on the covariance function $C_{ii}(t)$, we have
\begin{align*}
\bar Z_{n,i}^u\xrightarrow{p} C_i u^{\alpha_{ii}},
\end{align*}
where $\xrightarrow{p}$ represents convergence in probability and
$C_i = c_{ii}(8 - 2^{\alpha_{ii}+1})$. Hence,
\begin{align*}
\ln \bar Z_{n,i}^u\xrightarrow{p}\alpha_{ii}\ln u+\ln C_i,\  \ i=1, 2,
\end{align*}
where $\ln$ represents natural logarithm. Consequently, the fractal indices $\alpha_{ii}$  ($i = 1, 2$) can be estimated
by linear regression of $\ln \bar Z_{n,i}^u$ on $\ln u$ for $u = 1, \ldots, m$.

In this paper,  we employ Chan and Wood \cite{Chan_Wood_2000}'s linear 
estimators for $\alpha_{ii}$ based on $\ln \bar Z_{n,i}^u$, that is,
\begin{align} \label{Def:alpha}
\hat\alpha_{ii}=\sum_{u=1}^m L_{u,i}\ln \bar Z_{n,i}^u,
\end{align}
where $\{L_{u,i}, u=1,\ldots,m\} \, (i=1,2)$ are finite sequences  
of real numbers such that
\begin{align}
\label{L_u,i assumption}
\sum_{u=1}^mL_{u,i}=0\ \ \ \ \text{ and }\ \ \  \sum_{u=1}^mL_{u,i}\ln u=1.
\end{align}
Both the ordinary least squares and generalized least squares
estimators introduced by Kend and Wood \cite{Kent_Wood_1997} are examples of
the above estimators. We remark that due to the first condition in (\ref{L_u,i assumption}), 
the estimators $\hat\alpha_{ii}$ ($i = 1, 2$) defined in (\ref{Def:alpha}) can be computed 
from the observed values $(\mb X(1/n), \ldots, \mb X(1))^\top$ and do not depend on 
the unknown indices $\alpha_{ii}$. 

\section{Joint asymptotic properties}
\label{Asymp-prop}
For $i = 1, 2$, let
\begin{align*}
\mb {\bar  Z}_{n,i}=(\bar Z_{n,i}^1,\ldots,
\bar Z_{n,i}^m)^\top
\end{align*}
and denote
\begin{align*}
\mb {\bar Z}_{n}=(\mb{\bar Z}_{n,1}^\top,\mb{\bar Z}_{n,2}^\top)^\top.
\end{align*}
Under the infill asymptotics framework, we first study the asymptotic
properties of $\mb{\bar Z}_n$ in Section
\ref{sec:variance of Zbar and asymptotic normality}. In Section
\ref{sec: asymptotic properties of alpha1 and alpha2}, the joint
asymptotic properties of the estimators $(\hat\alpha_{11},
\hat\alpha_{22})^\top$ are obtained.

\subsection{Variance of $\mb{\bar Z}_n$ and asymptotic normality}
\label{sec:variance of Zbar and asymptotic normality}
First, given $u,v=1,\ldots, m$, we consider the covariance matrix of
$(Y_{n,1}^u,\,Y_{n,2}^v)^\top$. For $i = 1, 2$, it follows from 
Kent and Wood \cite{Kent_Wood_1997} that the marginal covariance function for 
$Y_{n,i}^u$ and  $Y_{n,i}^v$ is
\begin{align}
\label{sigma_n,ii^uv(h)}
\sigma_{n,ii}^{uv}(h)&:={\rm E}[Y_{n,i}^u(\ell)Y_{n,i}^v(\ell+h)]\nonumber\\
&\rightarrow -c_{ii}\sum_{j,k=-1}^1a_ja_k|h+kv-ju|^{\alpha_{ii}}\triangleq
\sigma_{0,ii}^{uv}(h),
\end{align}
as $\ n\rightarrow \infty$. In particular, we derive that the variance
of $Y_{n,i}^u(\ell)$ satisfies
\begin{equation}\label{sigma_n,ii^uv(h)2}
\sigma_{n,ii}^{uu}(0) \rightarrow C_i\, u^{\alpha_{ii}},
\ \ \ \ \ \hbox{ as } \, n \to \infty,
\end{equation}
where $C_i = c_{ii} (8 - 2^{\alpha_{ii}+1})$.

Under the assumption \eqref{covariance assumption}, the
cross covariance between $Y_{n,1}^u$ and $Y_{n,2}^v$
can be derived as follows.
\begin{equation}
\label{sigma_n,12^uv(h)}
\begin{split}
&\sigma_{n,12}^{uv}(h):={\rm E}[Y_{n,1}^u(\ell)Y_{n,2}^v(\ell+h)]\\
&=n^{\frac{\alpha_{11}+\alpha_{22}}{2}}\sum_{j,k=-1}^1a_ja_kC_{12}
\bigg(\frac{h+kv-ju}{n}\bigg) \\
&\rightarrow
\left\{\begin{array}{ll}
0,&\text{if}\ \frac{\alpha_{11}+\alpha_{22}}{2}<\alpha_{12},\\
-\rho \sigma_1\sigma_2 c_{12}\sum_{j,k=-1}^{1}a_ja_k|h+kv-ju|^{\alpha_{12}}
,& \text{if}\ \frac{\alpha_{11}+\alpha_{22}}{2}=\alpha_{12}
\end{array}
\right.\\
& \triangleq \sigma_{0,12}^{uv}(h).
\end{split}
\end{equation}
Therefore, if $(\alpha_{11}+\alpha_{22})/2<\alpha_{12}$, the
covariance matrix of $(Y_{n,1}^u(\ell),Y_{n,2}^v(\ell+h))^\top$ satisfies
\begin{align*}
\text{Var}\left(
\begin{array}{l}
Y_{n,1}^u(\ell)\\
Y_{n,2}^v(\ell+h)
\end{array}
\right)\rightarrow \left(
\begin{array}{ll}
\sigma_{0,11}^{uu}(0)&0\\
0&\sigma_{0,22}^{vv}(0)
\end{array}
\right),\ \text{as}\ n\rightarrow \infty.
\end{align*}
If $(\alpha_{11}+\alpha_{22})/2=\alpha_{12}$,
the covariance matrix of $(Y_{n,1}^u(\ell),Y_{n,2}^v(\ell+h))^\top$
satisfies
\begin{align*}
\text{Var}\left(
\begin{array}{l}
Y_{n,1}^u(\ell)\\
Y_{n,2}^v(\ell+h)
\end{array}
\right)\rightarrow \left(
\begin{array}{ll}
\sigma_{0,11}^{uu}(0)&\sigma_{0,12}^{uv}(h)\\
\sigma_{0,12}^{uv}(h)&\sigma_{0,22}^{vv}(0)
\end{array}
\right),\ \ \ \text{ as }\ n \rightarrow \infty.
\end{align*}
We adapt the method of derivation in Section $3$ of Kent and Wood
\cite{Kent_Wood_1997} to find the covariance matrix of the
random vector $\mb{\bar Z}_n$. Using the fact that
if  $(U,V)\sim \mathcal{N}\bigg(\Big(\begin{array}{l}0\\0\end{array}\Big),\ 
 \Big(\begin{array}{ll}1 & \xi\\ \xi & 1\end{array}\Big)\bigg)$
is a bivariate normal random vector, 
then $\Cov(U^2,V^2)=2\xi^2$, we obtain
\begin{align*}
\Cov(Z_{n,i}^u(\ell),Z_{n,j}^v(\ell+h))
&=2(\sigma^{uv}_{n,ij}(h))^2,\ \ \  i,j=1,2;
\end{align*}
hence,
\begin{align*}
\phi_{n,ij}^{uv}:=\Cov(\bar Z_{n,i}^u,\bar Z_{n,j}^v)
=\frac{1}{n}\sum_{h=-n+1}^{n-1}\bigg(1-\frac{|h|}{n}\bigg)\times
2(\sigma_{n,ij}^{uv}(h))^2.
\end{align*}
Denote by $\mb \Phi_{n,ij}=(\phi_{n,ij}^{uv})_{u,v=1}^m$
the covariance matrix of $\mb{\bar Z}_{n,i}$ and $\mb {\bar Z}_{n,j}$.
Then,  the covariance matrix of $\mb{\bar Z}_n$ can be written as
\begin{align*}
\mb \Phi_n=\left(\begin{array}{ll}
\mb\Phi_{n,11}&\mb\Phi_{n,12}\\
\mb\Phi_{n,21}&\mb\Phi_{n,22}
\end{array}
\right).
\end{align*}
To study the asymptotic properties of $\mb\Phi_n$
and $\mathbf{\bar Z}_n$, we impose an additional
regularity condition on the fourth derivative of the functions $C_{ij}(t)$ in 
(\ref{Cov}) around the origin, which is analogous to the condition ($A_4$) in \cite{Kent_Wood_1997} 
and will be called Condition (${\mb A2}$):
\begin{equation*}
\label{derivative of covariance}
\begin{split}
C_{11}^{(4)}(t)&=-\frac{c_{11}\alpha_{11}!}{(\alpha_{11}-4)!}
|t|^{\alpha_{11}-4}+o(|t|^{\alpha_{11}-4}), \\
C_{22}^{(4)}(t)&=-\frac{c_{22}\alpha_{22}!}{(\alpha_{22}-4)!}
|t|^{\alpha_{22}-4}+o(|t|^{\alpha_{22}-4}),\\
C_{12}^{(4)}(t)&=C_{21}^{(4)}(t)=-\rho \sigma_1\sigma_2
\frac{c_{12}\alpha_{12}!}{(\alpha_{12}-4)!}
|t|^{\alpha_{12}-4}+o(|t|^{\alpha_{12}-4}).
\end{split}
\end{equation*}
Above, for any $\alpha > 0$, $\alpha!/(\alpha-4)! = \alpha (\alpha-1)(\alpha-2)(\alpha-3)$.


For $i, j = 1, 2$, let $\phi_{0,ij}^{uv}=2\sum_{h=-\infty}^{\infty}
(\sigma_{0,ij}^{uv}(h))^2$,  which is convergent,  $\mb\Phi_{0,ij}
=(\phi_{0,ij}^{uv})_{u,v=1}^m$, and let
\begin{align*}
\mb\Phi_0=\left(\begin{array}{ll}
\mb\Phi_{0,11}&\mb\Phi_{0,12}\\
\mb\Phi_{0,21}&\mb\Phi_{0,22}
\end{array}
\right).
\end{align*}

The following theorems describe the asymptotic properties of the
random vector $\mb{\bar Z}_n$.
\begin{theorem}
\label{thm: asymptotic variance of Z_1&Z_2}
If Conditions (${\mb A1}$) and (${\mb A2}$) hold, then
\begin{align}
\label{nPhi_n convergence}
n\bb\Phi_n\rightarrow \bb\Phi_0, \ \text{as}\ n\rightarrow \infty.
\end{align}
Moreover, if  $(\alpha_{11}
+\alpha_{22})/{2}<\alpha_{12}$, then
$\bb\Phi_{0,12}=\bb\Phi_{0,21}=\mathbf{0}$.
\end{theorem}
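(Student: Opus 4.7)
The plan is to reduce the matrix limit (\ref{nPhi_n convergence}) to the entry-wise assertion
\[
n\phi_{n,ij}^{uv} = \sum_{h=-n+1}^{n-1}\Big(1-\tfrac{|h|}{n}\Big)\cdot 2\big(\sigma_{n,ij}^{uv}(h)\big)^2 \longrightarrow 2\sum_{h=-\infty}^{\infty}\big(\sigma_{0,ij}^{uv}(h)\big)^2 = \phi_{0,ij}^{uv}
\]
for every $i,j\in\{1,2\}$ and $u,v\in\{1,\dots,m\}$, and then invoke the dominated convergence theorem for series. Pointwise convergence $\sigma_{n,ij}^{uv}(h)\to\sigma_{0,ij}^{uv}(h)$ for each fixed $h$ is already supplied by (\ref{sigma_n,ii^uv(h)}) and (\ref{sigma_n,12^uv(h)}), and the triangular weight $1-|h|/n$ tends to $1$; the task therefore reduces to producing an $n$-uniform, summable envelope $|\sigma_{n,ij}^{uv}(h)|^2\le g(h)$.

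The envelope comes from a Taylor argument exploiting the vanishing moments of the tensor-product filter $a_{j'}a_k$ with $a_{-1}=1,\, a_0=-2,\, a_1=1$: a direct check shows that $\sum_{j',k=-1}^{1} a_{j'}a_k\,(kv-j'u)^{r}=0$ for $r=0,1,2,3$, so the first non-vanishing term in a Taylor expansion is of fourth order. Starting from
\[
\sigma_{n,ij}^{uv}(h) = n^{(\alpha_{ii}+\alpha_{jj})/2}\sum_{j',k=-1}^{1} a_{j'}a_k\, C_{ij}\Big(\tfrac{h+kv-j'u}{n}\Big)
\]
and expanding $C_{ij}$ to fourth order about $h/n$, the double sum reduces to a term controlled by $C_{ij}^{(4)}$ evaluated at a point in the window $[(h-2m)/n,(h+2m)/n]$. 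Invoking Condition (${\mb A2}$), which gives $|C_{ij}^{(4)}(t)|\le C_0 |t|^{\beta_{ij}-4}$ near the origin with $\beta_{ii}=\alpha_{ii}$ and $\beta_{12}=\alpha_{12}$, together with standard boundedness of $C_{ij}^{(4)}$ away from the origin, yields for $|h|\ge 4m$ the $n$-uniform estimates
\[
|\sigma_{n,ii}^{uv}(h)| \le C_0 |h|^{\alpha_{ii}-4},\qquad |\sigma_{n,12}^{uv}(h)| \le C_0\, n^{(\alpha_{11}+\alpha_{22})/2-\alpha_{12}}|h|^{\alpha_{12}-4}.
\]
Under Condition (${\mb A1}$) we have $(\alpha_{11}+\alpha_{22})/2\le\alpha_{12}$, and in the admissible range of parameters the squared envelopes $|h|^{2(\alpha_{ii}-4)}$ and $|h|^{2(\alpha_{12}-4)}$ are summable in $h$; the finitely many indices with $|h|<4m$ contribute a bounded amount independent of $n$.

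With the envelope in hand, dominated convergence delivers (\ref{nPhi_n convergence}). The second assertion is then immediate: when $(\alpha_{11}+\alpha_{22})/2<\alpha_{12}$, the first case of (\ref{sigma_n,12^uv(h)}) forces $\sigma_{0,12}^{uv}(h)\equiv 0$ for every $h$, whence $\phi_{0,12}^{uv}=0$ for all $u,v$, so $\bb\Phi_{0,12}=\bb\Phi_{0,21}=\mathbf{0}$. The main technical obstacle is the $n$-uniform envelope estimate: one must split the sum over $h$ into the regime where $h/n$ is near the origin (so that Condition (${\mb A2}$) is applied directly) and the regime where $h/n$ is bounded away from zero (where a smoothness bound for $C_{ij}^{(4)}$ on a compact set is used), and verify in each regime that the $n$-dependence cancels cleanly so that only the $|h|^{\beta-4}$ decay survives.
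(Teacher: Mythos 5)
Your argument is correct and follows essentially the same route as the paper: pointwise convergence of $\sigma_{n,ij}^{uv}(h)$ from \eqref{sigma_n,ii^uv(h)} and \eqref{sigma_n,12^uv(h)}, plus an $n$-uniform summable envelope obtained by a fourth-order Taylor expansion about $h/n$ using the vanishing moments of the filter and Condition (${\mb A2}$), followed by dominated convergence; the envelope you derive is precisely the content of the paper's Lemma \ref{lem: asymptotic of Y on h}, with the $n$-factor $n^{(\alpha_{11}+\alpha_{22})/2-\alpha_{12}}$ absorbed via $n>|h|$ to give the summable bound $|h|^{\alpha_{11}+\alpha_{22}-8}$. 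The concluding observation that $\sigma_{0,12}^{uv}\equiv 0$ forces $\bb\Phi_{0,12}=\bb\Phi_{0,21}=\mathbf{0}$ is likewise the paper's argument.
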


\begin{theorem}
\label{thm: asymptotic normality of bar Z}
If Conditions (${\mb A1}$) and (${\mb A2}$) hold, then
\begin{align*}
n^{1/2}(\mb{\bar Z}_n-{\rm E}[\mb{\bar Z}_n])
\xrightarrow{d} \mathcal{N}_{2m}(\mb 0,\bb\Phi_0), \ \text{as}\
n\rightarrow \infty,
\end{align*}
where $\mathcal{N}_{2m}(\mb 0,\bb\Phi_0)$ is the $(2m)$-dimensional normal distribution
with mean $\mb 0$ and covariance matrix $\bb\Phi_0$.
\end{theorem}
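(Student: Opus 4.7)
The plan is to reduce this multivariate statement to a univariate central limit theorem by the Cramér--Wold device, and then to establish the latter by the method of cumulants, exploiting that $S_n := n^{1/2}\mathbf{c}^\top(\mb{\bar Z}_n-\mathrm{E}[\mb{\bar Z}_n])$ is, up to a constant, a centered quadratic form in a Gaussian vector. Fixing $\mathbf{c}=(\mathbf{c}_1^\top,\mathbf{c}_2^\top)^\top\in\mathbb{R}^{2m}$ with $\mathbf{c}_i=(c_{1,i},\dots,c_{m,i})^\top$, I would write
\begin{align*}
S_n
= \frac{1}{n^{1/2}}\sum_{j=1}^{n}\sum_{i=1}^{2}\sum_{u=1}^{m} c_{u,i}\bigl\{(Y_{n,i}^u(j))^2-\sigma_{n,ii}^{uu}(0)\bigr\}.
\end{align*}
Stacking all $Y_{n,i}^u(j)$ into one centered Gaussian vector $\mathbf{Y}\in\mathbb{R}^{2mn}$ with covariance $\Sigma_n$, this becomes $S_n = n^{-1/2}\bigl(\mathbf{Y}^\top A_n\mathbf{Y}-\mathrm{E}[\mathbf{Y}^\top A_n\mathbf{Y}]\bigr)$ for the diagonal matrix $A_n$ with entries $c_{u,i}$. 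The classical cumulant formula for Gaussian quadratic forms then gives
\begin{align*}
\kappa_k(S_n)=2^{k-1}(k-1)!\,n^{-k/2}\,\mathrm{tr}\bigl((A_n\Sigma_n)^k\bigr),\qquad k\ge 2.
\end{align*}

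The essential analytic input I would need is the uniform-in-$n$ summability
\begin{align*}
\sup_{n\ge 1}\ \max_{i,j\in\{1,2\}}\ \max_{1\le u,v\le m}\ \sum_{h\in\mathbb{Z}}|\sigma_{n,ij}^{uv}(h)|<\infty.
\end{align*}
To prove this, observe that $\sigma_{n,ij}^{uv}(h)$ equals $n^{(\alpha_{ii}+\alpha_{jj})/2}$ times a fourth-order finite difference of $C_{ij}$ with step $O(1/n)$ centred at $h/n$. For $|h|\ge 2m$ a Taylor expansion together with Condition~(\textbf{A2})'s $|C_{ij}^{(4)}(t)|\asymp|t|^{\alpha_{ij}-4}$ yields
\begin{align*}
|\sigma_{n,ij}^{uv}(h)|\ \lesssim\ n^{(\alpha_{ii}+\alpha_{jj})/2-\alpha_{ij}}\,|h|^{\alpha_{ij}-4},
\end{align*}
and Condition~(\textbf{A1}) guarantees that the prefactor is bounded by $1$, so summing over $|h|\le n$ stays bounded independently of $n$. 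The finitely many small-$h$ terms are controlled by the uniform bound $|\sigma_{n,ij}^{uv}(h)|\le(\sigma_{n,ii}^{uu}(0)\sigma_{n,jj}^{vv}(0))^{1/2}$ coming from Cauchy--Schwarz and \eqref{sigma_n,ii^uv(h)2}.

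With this bound in hand, $\mathrm{tr}((A_n\Sigma_n)^k)$ unfolds into a cyclic sum $\sum_{j_1,\dots,j_k\in[n]}\prod_{\ell=1}^{k}\sigma_{n,i_\ell i_{\ell+1}}^{u_\ell u_{\ell+1}}(j_\ell-j_{\ell+1})$ over the finitely many choices of the discrete indices $(i_\ell,u_\ell)$. Fixing $j_1$ (at most $n$ choices) and summing each of the remaining $k-1$ cyclic increments freely over $\mathbb{Z}$ gives $|\kappa_k(S_n)|\lesssim n^{-k/2}\cdot n\cdot C^{k-1}=O(n^{1-k/2})\to 0$ for every $k\ge 3$, while Theorem~\ref{thm: asymptotic variance of Z_1&Z_2} gives $\kappa_2(S_n)=n\mathbf{c}^\top\bb\Phi_n\mathbf{c}\to\mathbf{c}^\top\bb\Phi_0\mathbf{c}$. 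Since a centered Gaussian law is determined by its cumulants, the method of cumulants yields $S_n\xrightarrow{d}\mathcal{N}(0,\mathbf{c}^\top\bb\Phi_0\mathbf{c})$, and Cramér--Wold completes the proof.

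The main obstacle will be the uniform decay estimate. Condition~(\textbf{A2}) only describes $C_{ij}^{(4)}$ near the origin, so the Taylor-expansion argument must be executed carefully when $h/n$ is not small; one needs to split the range of $h$ into a near regime, where the fourth-difference approximation is sharp, and a far regime where the behaviour of $C_{ij}^{(4)}$ away from zero enters. Keeping precise track of the prefactor $n^{(\alpha_{ii}+\alpha_{jj})/2-\alpha_{ij}}$ is also delicate, since it is exactly Condition~(\textbf{A1}) that prevents the cross-term contribution from blowing up and thereby secures the uniform-in-$n$ summability needed for the cumulant bound.
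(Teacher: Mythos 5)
Your proposal is correct and follows essentially the same route as the paper: Cram\'er--Wold reduction, representation of $S_n$ as a centered Gaussian quadratic form, and trace estimates on powers of the (weight matrix)$\times$(covariance matrix) product, all resting on the same uniform decay bound $|\sigma_{n,ij}^{uv}(h)|\lesssim |h|^{(\alpha_{ii}+\alpha_{jj})/2-4}$ that the paper isolates as Lemma \ref{lem: asymptotic of Y on h}. The only (minor) divergence is in how the limit theorem is closed --- you bound every cumulant $\kappa_k$, $k\ge 3$, via the cyclic-sum argument, whereas the paper controls only $\mathrm{trace}(\bb\Lambda_n^4)$ and then handles all higher-order terms at once through the Taylor expansion of the cumulant generating function on a fixed interval; both are standard and valid ways to finish.
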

\setcounter{theorem}{0}

\begin{remark}
Theorem \ref{thm: asymptotic normality of bar Z} extends Theorem $2$ in Kent and Wood 
\cite{Kent_Wood_1997} to the bivariate case, and shows that $\sqrt{n}\mb{\bar Z}_{n,1}$
and $\sqrt{n}\mb{\bar Z}_{n,2}$ are asymptotically independent when
$(\alpha_{11}+\alpha_{22})/2<\alpha_{12}$. The proofs of Theorems
\ref{thm: asymptotic variance of Z_1&Z_2} and \ref{thm: asymptotic normality of bar Z}
are given in Appendix D.
\end{remark}

\begin{remark}
The class of matrix-valued covariance functions whose properties around the origin 
satisfy ($\mb A1$) and ($\mb A2$) is large, including such significant examples as 
the bivariate  Mat\'{e}rn model of Gneiting, Kleiber and Schlather \cite{Gneiting_Kleiber_Schlather2010}, 
the bivariate powered exponential model and  bivariate Cauchy model of Moreva and Schlather 
\cite{Moreva_Martin_2016}, the bivariate Wendland-Gneiting covariance function of Daley, Procu and Bevilacqua \cite{Daley_Porcu_Bevilacqua_2015} and a class of bivariate models introduced by Du and Ma 
\cite{Du_Ma_2013}, such as Example 3. Since the matrix-valued covariance functions in these references have 
explicit closed forms,  Conditions ($\mb A1$) and ($\mb A2$) can be verified directly by using Taylor's expansion or 
L'Hospital's rule.
\end{remark}

\begin{remark}
Another way to verify Conditions ($\mb A1$) and ($\mb A2$) is to make use of the spectral 
representation of $C_{ij}$:
\[
C_{ij}(t) = \int_{\mathbb R} \cos (t \xi) F_{ij}(d \xi),
\]
where  $F_{ij}$ is the spectral measure of $C_{ij}$. Writing 
\[
C_{ij}(0) - C_{ij}(t) = \int_{\mathbb R} (1 - \cos (t \xi)) F_{ij}(d \xi),
\]
one can  see that Condition ($\mb A1$)  may follow from an Abelian-type 
theorem and the tail behavior of the spectral measure $F_{ij}$ at infinity (see, for example, \cite{pitman1968}).  

To verify Condition ($\mb A2 $),  we may assume that $F_{ij}$ has a
density function $f_{ij}(\xi)$ which decays faster than certain polynomial rate as  $|\xi| \to \infty$. 
A change of variable yields that for $t \ne 0$,
\[
C_{ij}(0) -  C_{ij}(t) = \frac 1 t \int_{\mathbb R}\big(1 -  \cos \xi\big)\, f_{ij}\big(\frac{\xi}{t}\big) \, d\xi.
\]
Then we can differentiate $C_{ij}(t)$ as follows:
\[
\begin{split}
C_{ij}'(t) &=  \frac 1 {t^2} \int_{\mathbb R} \big(1 - \cos \xi \big)\, f_{ij}\big(\frac{\xi}{t}\big) \, d\xi + \frac 1 {t}  
\int_{\mathbb R} \big(1 - \cos \xi \big)\, f_{ij}'\big(\frac{\xi}{t}\big) \, \frac \xi {t^2}\, d\xi\\
&=  \frac {C_{ij}(0)- C_{ij}(t)} t + \frac 1 {t}  
\int_{\mathbb R} \big(1 - \cos \xi \big)\, f_{ij}'\big(\frac{\xi}{t}\big) \, \frac \xi {t^2}\, d\xi.
\end{split}
\]
Consequently, the asymptotic behavior of $C_{ij}'(t) $ as $t \to 0$ can be derived from ($\mb A1$) 
and another application of the Abelian-type theorem in \cite{pitman1968} to the second integral. 
Iterating this procedure  three more times, we can verify Condition ($\mb A2 $).
\end{remark}

\setcounter{theorem}{2}

\subsection{Asymptotic properties of $(\hat \alpha_{11},\,
\hat\alpha_{22})^\top$}
\label{sec: asymptotic properties of alpha1 and alpha2}
This section contains the main results of this paper.
We make a stronger assumption by specifying the remainder
terms in Assumption \eqref{covariance assumption}.
Suppose that for some constants $\beta_{11},\,\beta_{22},\,\beta_{12}>0$,
\begin{equation}
\label{covariance assumption 2}
\begin{split}
C_{11}(t)&=\sigma_1^2-c_{11}|t|^{\alpha_{11}}+O(|t|^{\alpha_{11}+\beta_{11}}), \\
C_{22}(t)&=\sigma_2^2-c_{22}|t|^{\alpha_{22}}+O(|t|^{\alpha_{22}+\beta_{22}}), \\
C_{12}(t)&=C_{21}(t)=\rho \sigma_1\sigma_2(1-c_{12}|t|^{\alpha_{12}}+O(|t|^{\alpha_{12}+\beta_{12}})).
\end{split}
\end{equation}
We label the three conditions in (\ref{covariance assumption 2}),
together with (\ref{Eq:valid}), as Condition (${\mb A3}$).

Let $\hat {\bb \alpha} =(\hat \alpha_{11},\, \hat \alpha_{22})^\top$
be the estimators of the fractal indices $\bb\alpha =(\alpha_{11},
\alpha_{22})^\top$, as defined in (\ref{Def:alpha}). The theorems
below establish the asymptotic properties of $\hat {\bb \alpha}$,  including  the bias, mean
square error matrix and their joint asymptotic distribution.
\begin{theorem}[Bias]
\label{thm: Bias}
Assume Conditions ($\mb{A2}$) and ({$\mb A3$}) hold. Then, for
the estimators $\hat \alpha_{ii}, i=1,2$, we have
\begin{align*}
{\rm E} \big[ \hat \alpha_{ii}-\alpha_{ii} \big]=O(n^{-1})
+O(n^{-\beta_{ii}}),\ i=1,2.
\end{align*}
\end{theorem}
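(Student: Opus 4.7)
The plan is to reduce the bias calculation to an expansion of $\mathrm{E}[\ln\bar Z_{n,i}^u]$ for each fixed $u\in\{1,\dots,m\}$ and then use the defining properties of the weights $L_{u,i}$. By linearity and the identities $\sum_{u=1}^m L_{u,i}=0$ and $\sum_{u=1}^m L_{u,i}\ln u=1$,
\[
\mathrm{E}[\hat\alpha_{ii}]-\alpha_{ii}=\sum_{u=1}^m L_{u,i}\Bigl(\mathrm{E}[\ln\bar Z_{n,i}^u]-\alpha_{ii}\ln u-\ln C_i\Bigr),
\]
so it suffices to show, for each fixed $u$, that $\mathrm{E}[\ln\bar Z_{n,i}^u]=\alpha_{ii}\ln u+\ln C_i+O(n^{-1})+O(n^{-\beta_{ii}})$.

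The first step is to expand the mean $\mu_{n,i}^u:=\mathrm{E}[\bar Z_{n,i}^u]=\mathrm{E}[(Y_{n,i}^u(1))^2]$ as a double sum $n^{\alpha_{ii}}\sum_{p,q=-1}^1 a_p a_q C_{ii}((p-q)u/n)$ (by stationarity of $\mathbf{X}$). Inserting the sharp expansion from Condition ($\mathbf{A3}$), the constant $\sigma_i^2$ drops out because $\sum_p a_p=0$, the $|\cdot|^{\alpha_{ii}}$ term produces $-c_{ii}u^{\alpha_{ii}}\sum_{p,q}a_p a_q|p-q|^{\alpha_{ii}}=C_i u^{\alpha_{ii}}$, and the remainder $O(|t|^{\alpha_{ii}+\beta_{ii}})$ contributes $O(n^{-\beta_{ii}})$ after multiplication by $n^{\alpha_{ii}}$. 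Hence $\mu_{n,i}^u=C_i u^{\alpha_{ii}}+O(n^{-\beta_{ii}})$ and $\ln\mu_{n,i}^u=\ln C_i+\alpha_{ii}\ln u+O(n^{-\beta_{ii}})$.

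The second step is to compare $\mathrm{E}[\ln\bar Z_{n,i}^u]$ with $\ln\mu_{n,i}^u$. Writing $\bar Z_{n,i}^u=\mu_{n,i}^u(1+\tilde W)$ with $\tilde W:=(\bar Z_{n,i}^u-\mu_{n,i}^u)/\mu_{n,i}^u$, we have $\mathrm{E}[\tilde W]=0$ and, by Theorem \ref{thm: asymptotic variance of Z_1&Z_2}, $\mathrm{Var}(\tilde W)=O(n^{-1})$. Splitting the expectation on $A=\{|\tilde W|\le 1/2\}$ and $A^c$, on $A$ I expand $\ln(1+\tilde W)=\tilde W-\tfrac12\tilde W^2+O(|\tilde W|^3)$, so that the contribution is $O(\mathrm{E}[\tilde W^2])=O(n^{-1})$ (the linear term is $-\mathrm{E}[\tilde W\mathbf{1}_{A^c}]$, which is also $O(n^{-1})$ by Cauchy–Schwarz and exponential tail control on $A^c$). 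On $A^c$, since $\bar Z_{n,i}^u$ is a quadratic form in jointly Gaussian variables with eigenvalues summed to $\mu_{n,i}^u$ of order $1$ and operator norm of order $1/n$, a Hanson–Wright style concentration inequality gives $\mathrm{P}(A^c)\le C\exp(-cn)$, while the positivity of $\bar Z_{n,i}^u$ together with Gaussian moment bounds ensures $\mathrm{E}[|\ln(1+\tilde W)|^2]$ grows at most polynomially in $n$, so this part is $o(n^{-1})$. Combining, $\mathrm{E}[\ln\bar Z_{n,i}^u]=\ln\mu_{n,i}^u+O(n^{-1})$, which together with Step 1 yields the claim.

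The main obstacle is not the algebraic expansion but the Taylor control in Step 2: $\ln$ is unbounded near $0$, while $\bar Z_{n,i}^u$ is only almost surely positive, so one has to rule out rare events on which $\bar Z_{n,i}^u$ is very small. The chi-squared-like (nonnegative Gaussian quadratic form) structure of $\bar Z_{n,i}^u$ is what makes this tractable: its Laplace transform at small negative arguments is finite, yielding the exponential lower-tail bound that ultimately controls $\mathrm{E}[(\ln\bar Z_{n,i}^u)^2\mathbf{1}_{A^c}]$. Once this concentration step is established, the remainder of the proof is a short bookkeeping exercise that mirrors, in the univariate setting, the bias analysis of Kent and Wood \cite{Kent_Wood_1997}.
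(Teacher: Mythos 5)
Your proposal is correct and follows essentially the same route as the paper: the same decomposition into $\sum_u L_{u,i}\ln(1+T_{n,i}^u)$ plus $\sum_u L_{u,i}\ln\mathrm{E}\bar Z_{n,i}^u$, the same Taylor expansion of $\ln(1+T)$ with $\mathrm{E}T=0$ and $\mathrm{E}T^2=O(n^{-1})$, and the same Hanson--Wright tail bound combined with lower-tail control of the Gaussian quadratic form to handle the unboundedness of $\ln$ near zero (the paper's Lemma \ref{lem_1} does this via the density of ${\rm {min}}_j U_j^2$, where you invoke the Laplace transform, a cosmetic difference). No gaps.
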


\begin{theorem}[Mean square error matrix]
\label{thm: Mean Suare Error Matrix}
Assume ($\mb{A2}$) and ({$\mb A3$}) hold.
If $(\alpha_{11}+\alpha_{22})/2=\alpha_{12}$, then
\begin{align} \label{Eq:16}
&{\rm E}[(\bb{\hat\alpha}-\bb\alpha)(\bb{\hat \alpha}-\bb\alpha)^\top]
\nonumber\\
&=\left(
\begin{array}{ll}
O(n^{-1}))&O(n^{-1})\\
O(n^{-1})&O(n^{-1})
\end{array}
\right)+\left(
\begin{array}{ll}
O(n^{-\psi(\beta_{11},\beta_{11})})&O(n^{-\psi(\beta_{11},\beta_{22})})
\\
O(n^{-\psi(\beta_{11},\beta_{22})})
&O(n^{-\psi(\beta_{22},\beta_{22})})
\end{array}
\right).
\end{align}
Here and below,  $\psi(x_1,x_2):={\rm {min}}\{1+x_1,1+x_2,x_1+x_2\}$.

If $(\alpha_{11}+\alpha_{22})/2<\alpha_{12}$, then
\begin{align} \label{Eq:17}
&{\rm E}[(\hat{\bb\alpha}-\bb\alpha)(\hat{\bb \alpha}-\bb\alpha)^\top]
\nonumber\\
&=\left(
\begin{array}{ll}
O(n^{-1})&o(n^{-1})\\
o(n^{-1})&O(n^{-1})
\end{array}
\right)+\left(
\begin{array}{ll}
O(n^{-\psi(\beta_{11},\beta_{11})})&O(n^{-\psi(\beta_{11},\beta_{22})})
\\
O(n^{-\psi(\beta_{11},\beta_{22})})
&O(n^{-\psi(\beta_{22},\beta_{22})})
\end{array}
\right).
\end{align}
\end{theorem}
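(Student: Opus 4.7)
The strategy is the standard bias-variance decomposition
\begin{equation*}
\E\big[(\hat{\bb\alpha}-\bb\alpha)(\hat{\bb\alpha}-\bb\alpha)^\top\big] = \Cov(\hat{\bb\alpha}) + (\E[\hat{\bb\alpha}]-\bb\alpha)(\E[\hat{\bb\alpha}]-\bb\alpha)^\top,
\end{equation*}
and to estimate the two matrices separately. The second summand (the outer product of biases) is handled directly by Theorem \ref{thm: Bias}: each factor has the form $O(n^{-1}) + O(n^{-\beta_{kk}})$, so the $(i,j)$ entry is the sum of four contributions of orders $n^{-2}$, $n^{-1-\beta_{ii}}$, $n^{-1-\beta_{jj}}$ and $n^{-\beta_{ii}-\beta_{jj}}$. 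Since $\psi(\beta_{ii},\beta_{jj}) = \min(1+\beta_{ii}, 1+\beta_{jj}, \beta_{ii}+\beta_{jj}) \le 2$ in every case, the dominant exponent is exactly $\psi(\beta_{ii},\beta_{jj})$, reproducing the second matrix in both (\ref{Eq:16}) and (\ref{Eq:17}).

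To control $\Cov(\hat{\bb\alpha})$, I would linearize each logarithm around its mean. Writing $\mu_{n,i}^u := \E[\bar Z_{n,i}^u] \to C_i u^{\alpha_{ii}} > 0$ and Taylor-expanding, one obtains
\begin{equation*}
\ln \bar Z_{n,i}^u - \E[\ln \bar Z_{n,i}^u] = \frac{\bar Z_{n,i}^u - \mu_{n,i}^u}{\mu_{n,i}^u} + R_{n,i}^u,
\end{equation*}
where $R_{n,i}^u$ is quadratic in $\bar Z_{n,i}^u - \mu_{n,i}^u$ on the set where $\bar Z_{n,i}^u$ stays near $\mu_{n,i}^u$. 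Substituting into $\hat\alpha_{ii} = \sum_u L_{u,i} \ln \bar Z_{n,i}^u$ and using bilinearity, the leading contribution to $\Cov(\hat\alpha_{ii},\hat\alpha_{jj})$ equals $\sum_{u,v=1}^m L_{u,i} L_{v,j} (\mu_{n,i}^u \mu_{n,j}^v)^{-1} \phi_{n,ij}^{uv}$. By Theorem \ref{thm: asymptotic variance of Z_1&Z_2}, $n\,\phi_{n,ij}^{uv}$ converges to a finite limit, so this is $O(n^{-1})$ for every $(i,j)$, yielding the $O(n^{-1})$ entries of the first matrix. Moreover, when $(\alpha_{11}+\alpha_{22})/2 < \alpha_{12}$, the same theorem forces $\bb\Phi_{0,12} = \bb\Phi_{0,21} = \mathbf{0}$, so the off-diagonal block entries improve to $o(n^{-1})$, as asserted in (\ref{Eq:17}).

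The main technical obstacle is making the logarithmic linearization rigorous in the $L^2$-sense, since $\ln$ has a singularity at $0$ while $\bar Z_{n,i}^u$ admits no deterministic positive lower bound. My plan is to split expectations across the event $A_n := \{|\bar Z_{n,i}^u/\mu_{n,i}^u - 1| \le 1/2 \text{ for all } i, u\}$ and its complement. On $A_n$, the quadratic remainder bound for $\ln$ applies, and combining $\Var(\bar Z_{n,i}^u) = \phi_{n,ii}^{uu} = O(n^{-1})$ (from Theorem \ref{thm: asymptotic variance of Z_1&Z_2}) with Gaussian fourth-moment estimates for the quadratic forms $\bar Z_{n,i}^u$ yields $\E[(R_{n,i}^u)^2 \mathbf{1}_{A_n}] = O(n^{-2})$; Cauchy--Schwarz then bounds every cross-term involving an $R$ by $O(n^{-3/2})$, which is absorbed into the $O(n^{-1})$ or $o(n^{-1})$ leading orders. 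On $A_n^c$, Gaussian concentration for quadratic forms gives $\P(A_n^c)$ decaying faster than any polynomial in $n$, while a preliminary lower truncation of $\bar Z_{n,i}^u$ ensures polynomial moment control on $\ln \bar Z_{n,i}^u$, making the tail contribution negligible. Assembling the two matrices produces (\ref{Eq:16}) and (\ref{Eq:17}).
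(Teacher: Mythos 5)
Your proposal is correct and follows essentially the same route as the paper: the paper likewise reduces everything to the expansion $\hat\alpha_{ii}-\alpha_{ii}=\sum_u L_{u,i}\ln(1+T_{n,i}^u)+O(n^{-\beta_{ii}})$ with $T_{n,i}^u=(\bar Z_{n,i}^u-\E\bar Z_{n,i}^u)/\E\bar Z_{n,i}^u$, controls the stochastic product $\E[\ln(1+T_{n,i}^u)\ln(1+T_{n,j}^v)]$ by splitting on $\{|T|\le 1/2\}$ and combining $n\bb\Phi_n\to\bb\Phi_0$ (Theorem \ref{thm: asymptotic variance of Z_1&Z_2}, which also gives the $o(n^{-1})$ off-diagonal improvement when $\bb\Phi_{0,12}=\mathbf 0$) with the exponential-tail/log-moment bound of Lemma \ref{lem_1} (Hanson--Wright plus a density estimate for $\min_{1\le j\le n}U_j^2$ to control $\E[\ln^{2k}\bar Z_{n,i}^u]$) --- exactly the architecture you describe, merely reorganized as bias-variance rather than a direct four-term expansion. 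Two small caveats: your claim that $\psi(\beta_{ii},\beta_{jj})\le 2$ ``in every case'' fails when both $\beta$'s exceed $1$, but the resulting $n^{-2}$ term is then absorbed into the $O(n^{-1})$ block so the stated bound is unaffected; and the ``preliminary lower truncation'' yielding polynomial moments of $\ln\bar Z_{n,i}^u$ is the one step that genuinely requires an argument, namely the small-ball estimate the paper obtains from $\bar Z_{n,i}^u\gtrsim u^{\alpha_{ii}}\min_j U_j^2$.
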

\setcounter{theorem}{3}

\begin{remark}
The constants $\beta_{11}, \beta_{22}$ and $\beta_{12}$ from
(\ref{covariance assumption 2}) appear in both the bias and mean
square error matrix (Theorems \ref{thm: Bias} and
\ref{thm: Mean Suare Error Matrix}) because the remainder
terms $O(|t|^{\alpha_{ii}+\beta_{ii}})$ in the covariance
function are ignored in the estimation procedure,
which might strongly affect the efficiency of the estimators
(see, e.g., \cite{Kent_Wood_1997}). The statistical performance
of the estimators $(\hat \alpha_{11},\, \hat \alpha_{22})^\top$
can be significantly improved if more detailed information
on the remainder term is available. In Section
\ref{sec: an example_bivariate Matern field}, we show
that this is indeed the case when $\mathbf{X}$ is a nonsmooth bivariate Mat\'{e}rn process.
\end{remark}
\setcounter{theorem}{4}

Finally, we study the asymptotic distribution of $\hat {\bb\alpha}$
by applying multivariate delta methods (see, e.g.,
\cite{Amblard_Coeurjolly_2011, Lehmann_2006}). By \eqref{Def:Z},
\eqref{sigma_n,ii^uv(h)}, and \eqref{covariance assumption 2}, we have
\begin{equation}\label{Eq:Ztau}
{\rm E}\bar Z^u_{n,i}= {\rm E}[(Y_{n,i}^u(0))^2]= \tau_{u,i}
(1+O(n^{-\beta_{ii}})),
\end{equation}
where $\tau_{u,i}=c_{ii} (8 - 2^{\alpha_{ii}+1})u^{\alpha_{ii}}$.  Let
$$
\widetilde{\mathbf L}_i=(L_{1,i}/\tau_{1,i},\ldots,
L_{m,i}/\tau_{m,i})^\top,
\ \ \ i=1,2.
$$

The following theorem provides the joint asymptotic distribution of
$\hat {\bb\alpha}$.
\begin{theorem}[Asymptotic distribution]
\label{thm: Joint normality of alpha_11_22}
Assume ($\mb{A2}$) and ({$\mb A3$}) hold with $\beta_{11},
\beta_{22}>1/2$. Then, $\sqrt{n} (\hat {\bb\alpha}-\bb\alpha)$
follows the asymptotic properties below.
\begin{align*}
\sqrt{n}\left(
\begin{array}{l}
\hat\alpha_{11}-\alpha_{11}\\
\hat \alpha_{22}-\alpha_{22}
\end{array}
\right)\xrightarrow d \mathcal{N}\left(\left(\begin{array}{l}
0\\
0
\end{array}
\right),
\left(
\begin{array}{ll}
\widetilde {\mathbf L}_1^\top \mb\Phi_{0,11}
\widetilde{\mathbf L}_1&\widetilde {\mb L}_1^\top
\mb\Phi_{0,12}\widetilde {\mathbf L}_2\\
\widetilde {\mathbf L}_2^\top \mb\Phi_{0,21}\widetilde{\mathbf L}_1
&\widetilde {\mathbf L}_2^\top
 \mb\Phi_{0,22}\widetilde {\mathbf L}_2
\end{array}
\right)
\right).
\end{align*}
Specifically, if  $(\alpha_{11}+\alpha_{22})/{2}<\alpha_{12}$, then $\sqrt n \hat \alpha_{11}$
and $\sqrt n\hat \alpha_{22}$ are asymptotically independent.
\end{theorem}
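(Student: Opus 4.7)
The plan is to linearize $\hat\alpha_{ii}$ about $\alpha_{ii}$ by a multivariate delta-method argument and then invoke Theorem \ref{thm: asymptotic normality of bar Z}. Since $\tau_{u,i} = c_{ii}(8 - 2^{\alpha_{ii}+1})u^{\alpha_{ii}}$, we have $\ln \tau_{u,i} = \ln[c_{ii}(8-2^{\alpha_{ii}+1})] + \alpha_{ii} \ln u$, so the normalization conditions \eqref{L_u,i assumption} force $\sum_{u=1}^m L_{u,i} \ln \tau_{u,i} = \alpha_{ii}$. Consequently
\begin{equation*}
\sqrt n\bigl(\hat\alpha_{ii}-\alpha_{ii}\bigr) = \sqrt n \sum_{u=1}^m L_{u,i}\bigl(\ln \bar Z_{n,i}^u - \ln {\rm E}[\bar Z_{n,i}^u]\bigr) + \sqrt n \sum_{u=1}^m L_{u,i}\bigl(\ln {\rm E}[\bar Z_{n,i}^u] - \ln \tau_{u,i}\bigr).
\end{equation*}
By \eqref{Eq:Ztau}, the deterministic second sum is $O(\sqrt n\, n^{-\beta_{ii}}) = o(1)$ under the standing hypothesis $\beta_{ii}>1/2$, so only the stochastic first sum needs attention.

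Next, set $\Delta_{n,i}^u := (\bar Z_{n,i}^u - {\rm E}[\bar Z_{n,i}^u])/{\rm E}[\bar Z_{n,i}^u]$. Theorem \ref{thm: asymptotic variance of Z_1&Z_2} gives $\Var(\bar Z_{n,i}^u) = \phi_{n,ii}^{uu} = O(n^{-1})$, hence $\Delta_{n,i}^u = O_p(n^{-1/2})$, and a Taylor expansion of the logarithm yields
\begin{equation*}
\ln \bar Z_{n,i}^u - \ln {\rm E}[\bar Z_{n,i}^u] = \Delta_{n,i}^u - \tfrac12(\Delta_{n,i}^u)^2 + O_p\bigl((\Delta_{n,i}^u)^3\bigr),
\end{equation*}
so that the quadratic and higher-order remainders together contribute $\sqrt n\cdot O_p(n^{-1}) = o_p(1)$. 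Moreover, \eqref{Eq:Ztau} lets me swap $1/{\rm E}[\bar Z_{n,i}^u]$ for $1/\tau_{u,i}$ up to a factor $1+O(n^{-\beta_{ii}}) = 1+o(1)$. Stacking the resulting expansion for $i=1,2$ gives
\begin{equation*}
\sqrt n\bigl(\hat{\bb\alpha}-\bb\alpha\bigr) = \mathbf M^\top \sqrt n\bigl(\mb{\bar Z}_n - {\rm E}[\mb{\bar Z}_n]\bigr) + o_p(\mathbf 1),
\end{equation*}
where $\mathbf M$ is the $2m\times 2$ block-diagonal matrix with blocks $\widetilde{\mathbf L}_1$ and $\widetilde{\mathbf L}_2$. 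Theorem \ref{thm: asymptotic normality of bar Z} combined with Slutsky's theorem then produces the limit law $\mathcal N\bigl(\mathbf 0, \mathbf M^\top \bb\Phi_0 \mathbf M\bigr)$, and block multiplication reproduces exactly the four entries displayed in the theorem. When $(\alpha_{11}+\alpha_{22})/2 < \alpha_{12}$, Theorem \ref{thm: asymptotic variance of Z_1&Z_2} forces $\bb\Phi_{0,12}=\bb\Phi_{0,21}=\mathbf 0$, so the off-diagonal blocks of the limiting covariance vanish and joint Gaussianity upgrades uncorrelation to independence.

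The main technical hurdle is justifying the logarithmic Taylor expansion, since $\ln$ is singular at $0$ and one cannot expand a random variable that is not bounded away from zero. The standard remedy is to localize on the event $E_n := \bigcap_{i,u}\{|\Delta_{n,i}^u|\le 1/2\}$, whose complement has vanishing probability: since each $\bar Z_{n,i}^u$ is a quadratic form in a centered Gaussian vector, Hanson--Wright type concentration (or simply Chebyshev together with the $O(n^{-1})$ variance bound) drives $\P(E_n^c) \to 0$ fast enough to absorb the singular contribution into the $o_p(1)$ remainder. Once this localization is in hand, every remaining step reduces to routine algebra built on \eqref{Eq:Ztau}, Theorem \ref{thm: asymptotic variance of Z_1&Z_2} and Theorem \ref{thm: asymptotic normality of bar Z}.
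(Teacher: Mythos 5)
Your proposal is correct and follows essentially the same route as the paper: both reduce the problem to Theorem \ref{thm: asymptotic normality of bar Z} after observing that $\beta_{ii}>1/2$ makes the deterministic bias term $o(n^{-1/2})$, and then linearize the logarithm at $\bb\tau$. The only difference is that the paper invokes the multivariate delta method as a cited black box (Lehmann, Theorem 8.22) applied to the map $\mathbf f(\mathbf x)=(\sum_u L_{u,1}\ln x_{u,1},\sum_u L_{u,2}\ln x_{u,2})^\top$, whereas you carry out the Taylor expansion and the localization on $\{|\Delta_{n,i}^u|\le 1/2\}$ by hand; this is a legitimate unpacking of the same argument, not a different one.
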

\setcounter{theorem}{4}

\begin{remark}
The current estimation procedure and asymptotic properties are derived for nonsmooth bivariate Gaussian models, 
that is, the smoothness parameters $\alpha_{ii} \in (0,2)$ for $i=1,2$. If the sample function of the 
component $X_i$  is almost surely differentiable, then the corresponding index $\alpha_{ii} \ge 2$ in 
\eqref{covariance assumption}. 
In this case, one may extend the idea of Kent and Wood \cite{Kent_Wood_1995} and consider the covariance 
functions with the following local properties
\begin{align*}
\label{covariance assumption-extend}
\begin{split}
C_{11}(t)&=\sigma_1^2-\sum_{k=1}^{q} b_{1,k} t^{2j}-c_{11}|t|^{\alpha_{11}}+o(|t|^{\alpha_{11}}),\\
C_{22}(t)&=\sigma_2^2-\sum_{k=1}^{q} b_{2,k} t^{2j}-c_{22}|t|^{\alpha_{22}}+o(|t|^{\alpha_{22}}),\\
C_{12}(t)&=C_{21}(t)=\rho \sigma_1\sigma_2\Big(1-\sum_{k=1}^{q} b_{12,k} t^{2j}-c_{12}
|t|^{\alpha_{12}}+o(|t|^{\alpha_{12}})\Big),
\end{split}
\end{align*}
where $q$ is a positive integer and $\alpha_{ii} \in (2q, 2q+2)$. 
Then, the $q$th derivative process $\mathbf{X}^{(q)}:=(X_1^{(q)},X_2^{(q)})^\top$ would satisfy Condition ($\mb A1$) with smoothness parameters $(\alpha_{11}-2q, \alpha_{22}-2q)^\top$. Thus, the framework proposed in our paper can be extended to smooth bivariate Gaussian fields via estimating the fractal indices of their derivative processes.  
\end{remark}

\section{An example: nonsmooth bivariate Mat\'{e}rn processes on $\mathbb{R}$}
\label{sec: an example_bivariate Matern field}
The Mat\'{e}rn correlation function $M(h|\nu,a)$ on $\mathbb R^N$,
where $a>0,\nu>0$ are scale and smoothness parameters, is widely used
to model covariance structures in spatial statistics. It is defined as
\begin{equation}
M(h|\nu,a):=\frac{2^{1-\nu}}{\Gamma(\nu)}(a|h|)^\nu K_\nu(a|h|),
\quad h \in \mathbb R^N,\nonumber
\end{equation}
where $K_\nu$ is a modified Bessel function of the second kind.
Recently, Gneiting, Kleiber and Schlather \cite{Gneiting_Kleiber_Schlather2010} introduced
the full bivariate Mat\'{e}rn field $\mathbf{X}
=\{(X_1(s),X_2(s))^\top,\,  \, s\in \R^N\}$, which is an
$\mathbb{R}^2$-valued Gaussian random field on $\mathbb{R}^N$
with zero mean and matrix-valued covariance function:
\begin{equation}\label{4.2}
\mb C(h)=\left(\begin{array}{ll}
C_{11}(h)&C_{12}(h)\\
C_{21}(h)&C_{22}(h)
\end{array}
\right),
\end{equation}
where $C_{ij}(h):=\E[X_i(s+h)X_j(s)]$ are specified by
\begin{eqnarray}
\begin{split}
C_{11}(h)&=\sigma_1^2M(h|\nu_{11},a_{11}),\\
C_{22}(h)&=\sigma_2^2M(h|\nu_{22},a_{22}),\\
C_{12}(h)&=C_{21}(h)=\rho\sigma_1\sigma_2M(h|\nu_{12},a_{12}).
\end{split}\nonumber
\end{eqnarray}
A necessary and sufficient condition for $C(h)$ in (\ref{4.2})
to be valid is given by \cite{Gneiting_Kleiber_Schlather2010}.
We assume that the parameters $\nu_{ij}, a_{ij}, \sigma_i$, $
(i, j = 1, 2)$ and $\rho$ satisfy the condition in Theorem 3 of \cite{Gneiting_Kleiber_Schlather2010}, as well as our condition
(\ref{Eq:valid}).

To apply the results in Section 3, we focus on the case of $N=1$
and $0<\nu_{11},\nu_{22}<1$. Then, $\mathbf{X}
=\{(X_1(s),X_2(s))^\top,\,  \, s\in \R\}$ is a stationary
bivariate Gaussian process with nonsmooth sample functions.
For simplicity, we call $\mathbf{X}$ a bivariate Mat\'{e}rn process.

Recall that the Mat\'{e}rn correlation function has the following asymptotic expansion at $h=0$,
\begin{align} \label{Eq:Matern}
M(h|\nu,a)=1-b_1|h|^{2\nu}+b_2|h|^2+O(|h|^{2+2\nu}), \ \ \ \ \hbox{ as }\ |h|\to 0,
\end{align}
where $b_1$ and $b_2$ are explicit constants depending only on $\nu$ and $a$
(Eq. (\ref{Eq:Matern}) follows from (9.6.2) and (9.6.10) in
\cite{Abramowitz_Stegun_1972}). Therefore, ({$\mb A3$}) is
satisfied with $\beta_{ij} = 2 - \nu_{ij}$ for $i, j = 1, 2$.
Moreover, one can check that the regularity condition (${\mb A2}$)
regarding the fourth derivatives of the covariance function
is also satisfied (see the proof in Appendix C).

According to (\ref{Eq:Matern}) and the fact that 
$\sum_{j,k=-1}^1a_ja_k|k-j|^2 = 0$, we have
\begin{align}
\label{sigma_n,12^uv(h)-matern}
&\sigma_{n,ii}^{uu}(0):={\rm E}(Y_{n,i}^u(0))^2=n^{2\nu_{ii}}
\sum_{j,k=-1}^1a_ja_k C_{ii}\bigg(\frac{(k-j)u}{n}\bigg)\nonumber\\
&=-b_1\sigma_{ii}^2\sum_{j,k=-1}^1a_ja_k|k-j|^{2\nu_{ii}}u^{2\nu_{ii}}+O(n^{-2}).
\end{align}
Observe that, unlike (\ref{Eq:Ztau}), the constants $\beta_{ij}
= 2 - \nu_{ij}$ do not appear in
\eqref{sigma_n,12^uv(h)-matern} because the related terms sum to 0.
Consequently, we can prove the following results, which are stronger 
than what can be obtained by directly applying  Theorems 
\ref{thm: Bias} $\sim$ \ref{thm: Joint normality of alpha_11_22} 
to bivariate Mat\'{e}rn processes. Their proofs are modifications 
of those of Theorems \ref{thm: Bias} $\sim$
\ref{thm: Joint normality of alpha_11_22} in Section 6 and 
will be omitted.

\begin{proposition}[Bias] For the bivariate Mat\'{e}rn process $\mb X$ with 
$0<\nu_{11},\nu_{22}<1$, the bias of $\hat \nu_{ii}$ is
\label{coro: Bias-matern}
\begin{align*}
{\rm E}\big[\hat \nu_{ii}-\nu_{ii}\big]= O(n^{-1}),\ \  i=1,2.
\end{align*}
\end{proposition}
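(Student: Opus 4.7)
The plan is to specialize the proof of Theorem \ref{thm: Bias} to the bivariate Matérn process and exploit the two special features of the expansion \eqref{Eq:Matern}: first, the ``next order'' correction is the even-power term $b_2|h|^2$ rather than a general $O(|h|^{\alpha_{ii}+\beta_{ii}})$ remainder, and second, the second-difference filter annihilates $|h|^2$. Write $\hat\nu_{ii} = \hat\alpha_{ii}/2$ in terms of the linear estimator \eqref{Def:alpha}, and decompose
\begin{equation*}
{\rm E}[\hat\alpha_{ii}] - \alpha_{ii} = \sum_{u=1}^m L_{u,i}\bigl({\rm E}[\ln \bar Z_{n,i}^u] - \ln {\rm E}[\bar Z_{n,i}^u]\bigr) + \sum_{u=1}^m L_{u,i}\ln {\rm E}[\bar Z_{n,i}^u] - \alpha_{ii},
\end{equation*}
where $\alpha_{ii} = 2\nu_{ii}$. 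The first sum is the stochastic bias from logarithm, and the second is a purely deterministic approximation error.

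For the deterministic part, I would plug \eqref{Eq:Matern} into ${\rm E}[Y_{n,i}^u(0)^2] = n^{2\nu_{ii}}\sum_{j,k} a_j a_k C_{ii}((k-j)u/n)$ and collect terms by order. The leading $-b_1\sigma_i^2 |h|^{2\nu_{ii}}$ contribution yields exactly $\tau_{u,i}$; the $b_2|h|^2$ contribution contributes $b_2\sigma_i^2 n^{2\nu_{ii}-2} u^2 \sum_{j,k} a_j a_k |k-j|^2$, which vanishes because $\sum_{j,k=-1}^1 a_j a_k |k-j|^2 = 0$ (the same algebraic identity flagged in \eqref{sigma_n,12^uv(h)-matern}); the residual $O(|h|^{2+2\nu_{ii}})$ contributes $O(n^{2\nu_{ii}}\cdot n^{-2-2\nu_{ii}}) = O(n^{-2})$. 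Hence ${\rm E}[\bar Z_{n,i}^u] = \tau_{u,i} + O(n^{-2})$, so $\ln {\rm E}[\bar Z_{n,i}^u] = \ln \tau_{u,i} + O(n^{-2})$. Using the constraints \eqref{L_u,i assumption} on $L_{u,i}$ together with $\ln \tau_{u,i} = \ln[c_{ii}(8-2^{\alpha_{ii}+1})] + \alpha_{ii}\ln u$, the second sum reduces to $\alpha_{ii} + O(n^{-2})$.

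For the stochastic part, I would Taylor-expand $\ln x$ about $\mu_u := {\rm E}[\bar Z_{n,i}^u]$ to obtain
\begin{equation*}
{\rm E}[\ln \bar Z_{n,i}^u] - \ln \mu_u = -\frac{\Var(\bar Z_{n,i}^u)}{2\mu_u^2} + O\bigl({\rm E}|\bar Z_{n,i}^u - \mu_u|^3\bigr),
\end{equation*}
which is valid because $\bar Z_{n,i}^u$ is a normalized sum of squared Gaussians and concentrates around its strictly positive mean. Theorem \ref{thm: asymptotic variance of Z_1&Z_2} gives $\Var(\bar Z_{n,i}^u) = O(n^{-1})$, and standard moment bounds for Gaussian chaos yield ${\rm E}|\bar Z_{n,i}^u - \mu_u|^3 = O(n^{-3/2})$, so each stochastic bias term is $O(n^{-1})$. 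Multiplying by $L_{u,i}$ and summing gives a total stochastic contribution of $O(n^{-1})$. Combining both pieces and dividing by $2$ yields ${\rm E}[\hat\nu_{ii} - \nu_{ii}] = O(n^{-1})$.

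The main subtlety is handling the Taylor remainder for $\ln \bar Z_{n,i}^u$ uniformly in $n$ — one must rule out contributions from the (exponentially unlikely) event that $\bar Z_{n,i}^u$ is close to zero. This can be done exactly as in Kent and Wood \cite{Kent_Wood_1997}, by splitting the expectation on the event $\{|\bar Z_{n,i}^u - \mu_u| \le \mu_u/2\}$ and its complement, and using Gaussian concentration on the complement; this step mirrors the treatment already invoked in the proof of Theorem \ref{thm: Bias}, so only the improved deterministic rate $O(n^{-2})$ from the $|h|^2$ cancellation constitutes genuinely new content for the Matérn case.
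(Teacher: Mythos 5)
Your proposal is correct and takes essentially the same route the paper intends: the authors omit the proof, stating it is a modification of the proof of Theorem \ref{thm: Bias}, with the key new ingredient being exactly the cancellation $\sum_{j,k=-1}^1 a_j a_k|k-j|^2=0$ recorded in \eqref{sigma_n,12^uv(h)-matern}, which upgrades the deterministic error from $O(n^{-\beta_{ii}})$ to $O(n^{-2})$ while the stochastic $O(n^{-1})$ term is handled via Lemma \ref{lem_1} as in Section \ref{sec_proof of main results}. Your decomposition, the treatment of the Taylor remainder for the logarithm away from and near zero, and the final $O(n^{-1})$ conclusion all match.
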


For the next proposition, we write $\bb\nu=(\nu_{11},\nu_{22})^\top$ 
and $\hat {\bb\nu}
=(\hat \nu_{11},\hat \nu_{22})^\top$.
\begin{proposition}[Mean square error matrix]
\label{coro: Mean Square Error Matrix-matern}
For the bivariate Mat\'{e}rn process $\mb X$ with 
$0<\nu_{11},\nu_{22}<1$, 
if $(\nu_{11} +\nu_{22})/2=\nu_{12}$, then
\begin{align*}
&{\rm E}[(\hat {\bb\nu}-\bb\nu)(\hat {\bb\nu}-\bb\nu)]^\top
=\left(
\begin{array}{ll}
O(n^{-1})&O(n^{-1})\\
O(n^{-1})&O(n^{-1})
\end{array}
\right);
\end{align*}
if $(\nu_{11}+\nu_{22})/2<\nu_{12}$, we have
\begin{align*}
&{\rm E}[(\hat{\bb\nu}-\bb\nu)(\hat {\bb\nu}-\bb\nu)]^\top
=\left(
\begin{array}{ll}
O(n^{-1})&o(n^{-1})\\
o(n^{-1})&O(n^{-1})
\end{array}
\right).
\end{align*}
\end{proposition}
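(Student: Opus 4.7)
The plan is to mirror the proof of Theorem~\ref{thm: Mean Suare Error Matrix} from Section~\ref{sec_proof of main results} and to substitute in the two places where the Mat\'{e}rn structure gives sharper control than the generic Condition~$(\mathbf{A3})$. I start with the standard decomposition
\begin{align*}
{\rm E}[(\hat{\bb\nu}-\bb\nu)(\hat{\bb\nu}-\bb\nu)^\top] = {\rm Cov}(\hat{\bb\nu}) + ({\rm E}\hat{\bb\nu}-\bb\nu)({\rm E}\hat{\bb\nu}-\bb\nu)^\top.
\end{align*}
By Proposition~\ref{coro: Bias-matern} the second summand is entry-wise $O(n^{-2}) = o(n^{-1})$, so the remaining task is to bound the covariance matrix of $\hat{\bb\nu}$.

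For the covariance I apply the first-order delta method to the log-regression definition of $\hat\nu_{ii}$ (which, using $\alpha_{ii}=2\nu_{ii}$, is proportional to the $\hat\alpha_{ii}$ of (\ref{Def:alpha})). Using ${\rm E}\bar Z_{n,i}^u = \tau_{u,i} + O(n^{-2})$ from (\ref{sigma_n,12^uv(h)-matern}) together with the normalisations (\ref{L_u,i assumption}), Taylor expansion of $\ln$ about $\tau_{u,i}$ gives
\begin{align*}
\hat\nu_{ii} - \nu_{ii} = \tfrac12\, \widetilde{\mathbf L}_i^\top\bigl(\mb{\bar Z}_{n,i}-{\rm E}\mb{\bar Z}_{n,i}\bigr) + r_{n,i},
\end{align*}
where $r_{n,i}$ absorbs the $O(n^{-2})$ bias of $\bar Z_{n,i}^u$ and the quadratic-plus-higher Taylor remainder of $\ln$. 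Consequently,
\begin{align*}
{\rm Cov}(\hat\nu_{ii},\hat\nu_{jj}) = \tfrac14\, \widetilde{\mathbf L}_i^\top\bb\Phi_{n,ij}\widetilde{\mathbf L}_j + \bigl(\text{cross-terms in } r_{n,\cdot}\bigr).
\end{align*}
By Theorem~\ref{thm: asymptotic variance of Z_1&Z_2} we have $n\bb\Phi_n\to\bb\Phi_0$, so the leading term is $O(n^{-1})$; when $(\nu_{11}+\nu_{22})/2<\nu_{12}$ the block $\bb\Phi_{0,12}$ vanishes, so the $(1,2)$ and $(2,1)$ entries are in fact $o(n^{-1})$.

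The principal technical step is controlling the cross-terms in $r_{n,\cdot}$ so that they do not spoil the rate. Since each $Z_{n,i}^u(j) = (Y_{n,i}^u(j))^2$ lies in the second Wiener chaos, Gaussian hypercontractivity gives $\|\bar Z_{n,i}^u - {\rm E}\bar Z_{n,i}^u\|_{L^k} \lesssim \|\bar Z_{n,i}^u - {\rm E}\bar Z_{n,i}^u\|_{L^2} = O(n^{-1/2})$ for every $k \ge 2$; combined with $|{\rm E}\bar Z_{n,i}^u - \tau_{u,i}| = O(n^{-2})$, this forces every cross-term to be $o(n^{-1})$, and the two claims follow. The essential Mat\'{e}rn input beyond the generic theory is the identity $\sum_{j,k=-1}^1 a_ja_k(k-j)^2 = 0$ exploited in (\ref{sigma_n,12^uv(h)-matern}); without it the bias of $\bar Z_{n,i}^u$ would only be $O(n^{-(2-2\nu_{ii})})$, and one would recover the weaker $O(n^{-\psi(\beta_{ii},\beta_{jj})})$ corrections of Theorem~\ref{thm: Mean Suare Error Matrix} instead of the sharper rates stated here.
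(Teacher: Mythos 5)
Your overall skeleton is right, and you correctly isolate the one genuinely Mat\'ern-specific input: the cancellation $\sum_{j,k=-1}^{1}a_ja_k|k-j|^{2}=0$ in \eqref{sigma_n,12^uv(h)-matern}, which upgrades the bias of $\bar Z_{n,i}^u$ from $O(n^{-\beta_{ii}})$ to $O(n^{-2})$ and thereby removes the $O(n^{-\psi(\beta_{ii},\beta_{jj})})$ corrections of Theorem~\ref{thm: Mean Suare Error Matrix}. The paper's (omitted) proof is exactly a rerun of the proof of that theorem with this substitution, organized as a direct four-term expansion of ${\rm E}[(\hat\alpha_{ii}-\alpha_{ii})(\hat\alpha_{jj}-\alpha_{jj})]$ rather than your ${\rm Cov}+{\rm bias}\cdot{\rm bias}^\top$ split; that bookkeeping difference is immaterial.

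The gap is in your treatment of the remainder $r_{n,i}$. You claim that hypercontractivity of the second Wiener chaos, i.e. $\|\bar Z_{n,i}^u-{\rm E}\bar Z_{n,i}^u\|_{L^k}\lesssim n^{-1/2}$ for all $k$, ``forces every cross-term to be $o(n^{-1})$.'' But $r_{n,i}$ contains the Taylor remainder of $\ln(1+T_{n,i}^u)$, and $\ln(1+x)$ is singular at $x=-1$: on the event $\{\bar Z_{n,i}^u \text{ near } 0\}$ the remainder is not controlled by any polynomial moment of $T_{n,i}^u$ --- it is unbounded. Polynomial moments of $\bar Z-{\rm E}\bar Z$ say nothing about ${\rm E}[\ln^{2}\bar Z_{n,i}^u;\,\bar Z_{n,i}^u\le \varepsilon]$. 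This is precisely why the paper needs Lemma~\ref{lem_1}: one must (i) truncate at $|T_{n,i}^u|\le \xi$, where the remainder is genuinely $O((T_{n,i}^u)^2)$ and your moment bound applies, and (ii) on the complementary event combine the Hanson--Wright exponential tail bound $\P(|T_{n,i}^u|>\xi)\le e^{-C_0\xi\sqrt n}$ with the separate estimate ${\rm E}[\ln^{2k}(1+T_{n,i}^u)]\le C_0 n$, which is proved by bounding $\bar Z_{n,i}^u$ from below by a multiple of ${\rm {min}}_j U_j^2$ and integrating $\ln^{2k}$ against its density near the origin. Without some version of step (ii) your bound on the cross-terms does not follow; with it, your argument closes and reproduces the paper's conclusion. (A cosmetic point: since $\alpha_{ii}=2\nu_{ii}$, the factors of $\tfrac12$ and $\tfrac14$ you carry are harmless but should be stated consistently with how $\hat\nu_{ii}$ is defined from $\hat\alpha_{ii}$.)
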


\begin{proposition}[Asymptotic distribution]
\label{coro: Joint normality of alpha_11_22-matern}
For the bivariate Mat\'{e}rn process $\mb X$ with 
$0<\nu_{11},\nu_{22}<1$,
\begin{align*}
\sqrt{n}\left(
\begin{array}{l}
\hat\nu_{11}-\nu_{11}\\
\hat \nu_{22}-\nu_{22}
\end{array}
\right)\xrightarrow d \mathcal{N}\left(\left(\begin{array}{l}
0\\
0
\end{array}
\right),
\left(
\begin{array}{ll}
\widetilde {\mathbf L}_1^\top \mb\Phi_{0,11}\widetilde 
{\mathbf L}_1&\widetilde {\mb L}_1^\top \mb\Phi_{0,12}
\widetilde {\mathbf L}_2\\
\widetilde {\mathbf L}_2^\top \mb\Phi_{0,21} \widetilde{\mathbf L}_1
&\widetilde {\mathbf L}_2^\top \mb\Phi_{0,22}\widetilde {\mathbf L}_2
\end{array}
\right)
\right).
\end{align*}
Specifically, if  $(\nu_{11}+\nu_{22})/{2}<\nu_{12}$,
$\sqrt{n} \hat \nu_{11}$ and
$\sqrt{n}\hat \nu_{22}$
are asymptotically independent.
\end{proposition}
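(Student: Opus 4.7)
The plan is to derive Proposition \ref{coro: Joint normality of alpha_11_22-matern} by the same delta-method route that proves Theorem \ref{thm: Joint normality of alpha_11_22}, but with a sharper, Matérn-specific bias control that replaces the general $O(n^{-\beta_{ii}})$ by $O(n^{-2})$ and so removes any need for an additional restriction on $\nu_{11}, \nu_{22}$ beyond $\nu_{ii}\in(0,1)$. The first task is to verify Conditions (${\mb A1}$) and (${\mb A2}$) for the bivariate Matérn covariance from the expansion (\ref{Eq:Matern}) together with the asymptotic expansion of $K_\nu$; this is a Taylor-expansion computation carried out in Appendix C. Condition (${\mb A3}$) then holds with $\beta_{ij}=2-\nu_{ij}$, which exceeds $1/2$ since $\nu_{ij}<1$. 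At this point Theorem \ref{thm: asymptotic normality of bar Z} yields
\[
\sqrt{n}\bigl(\mb{\bar Z}_n - \E[\mb{\bar Z}_n]\bigr)\xrightarrow{d}\mathcal{N}_{2m}(\mb 0,\bb\Phi_0).
\]

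The second step sharpens the mean estimate to $\E[\bar Z_{n,i}^u]=\tau_{u,i}+O(n^{-2})$ using equation (\ref{sigma_n,12^uv(h)-matern}). The key observation is that, when the Matérn expansion $M(h)=1-b_1|h|^{2\nu}+b_2|h|^2+O(|h|^{2+2\nu})$ is plugged into $\sigma_{n,ii}^{uu}(0)=n^{2\nu_{ii}}\sum_{j,k=-1}^1 a_j a_k C_{ii}((k-j)u/n)$, the contribution of the $b_2|h|^2$ term vanishes because $\sum_{j,k=-1}^1 a_j a_k |k-j|^2 = 0$, so the residual is of order $n^{2\nu_{ii}}\cdot n^{-(2+2\nu_{ii})} = n^{-2}$. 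Since $\sqrt n\cdot n^{-2} = o(1)$, the CLT can be re-centered at the deterministic vector $\bb\tau=(\tau_{u,i})$, upgrading to $\sqrt{n}(\mb{\bar Z}_n-\bb\tau)\xrightarrow{d}\mathcal{N}_{2m}(\mb 0,\bb\Phi_0)$.

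The third step is a routine application of the multivariate delta method to the map $\bb\psi(\mb z)=\bigl(\sum_u L_{u,1}\ln z_{u,1},\ \sum_u L_{u,2}\ln z_{u,2}\bigr)^\top$ (incorporating the factor $1/2$ that converts the $\alpha$-estimator into the $\nu$-estimator $\hat{\bb\nu}$). Its Jacobian at $\bb\tau$ is block-diagonal with the $i$th block essentially equal to $\widetilde{\mb L}_i^\top$, and the normalization conditions (\ref{L_u,i assumption}) ensure that $\bb\psi(\bb\tau)$ identifies with $\bb\nu$. Applying the delta method to $\sqrt{n}(\mb{\bar Z}_n-\bb\tau)$ then produces the asymptotic normality
\[
\sqrt{n}(\hat{\bb\nu}-\bb\nu)\xrightarrow{d}\mathcal{N}\bigl(\mb 0,\,(\widetilde{\mb L}_i^\top\bb\Phi_{0,ij}\widetilde{\mb L}_j)_{i,j=1,2}\bigr),
\]
which is the claimed limit. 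The asymptotic independence under $(\nu_{11}+\nu_{22})/2<\nu_{12}$ follows immediately from Theorem \ref{thm: asymptotic variance of Z_1&Z_2}, which gives $\bb\Phi_{0,12}=\bb\Phi_{0,21}=\mb 0$ in this regime, so that the off-diagonal blocks of the limiting covariance matrix vanish.

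The main obstacle is the verification of Condition (${\mb A2}$) for the Matérn covariance, which requires controlling the fourth derivative of $C_{ij}(t)$ near $t=0$ through the series representation of $K_\nu$; this is the content of Appendix C. Once that regularity input is available, the remainder is essentially a re-run of the proof of Theorem \ref{thm: Joint normality of alpha_11_22}, with the critical improvement being the combinatorial cancellation $\sum_{j,k=-1}^1 a_j a_k |k-j|^2 = 0$ that upgrades the bias bound to $O(n^{-2})$ and thereby avoids any smoothness threshold of the form $\beta_{ii}>1/2$ becoming binding.
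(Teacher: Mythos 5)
Your argument is correct and is essentially the modification of the proof of Theorem~\ref{thm: Joint normality of alpha_11_22} that the paper indicates (and omits): the cancellation $\sum_{j,k=-1}^1 a_ja_k|k-j|^2=0$ removes the $|t|^2$ term of the Mat\'ern expansion, so $\E \bar Z^u_{n,i}=\tau_{u,i}+O(n^{-2})$, the recentering at $\bb\tau$ is legitimate for all $\nu_{ii}\in(0,1)$, and the delta method together with Theorem~\ref{thm: asymptotic variance of Z_1&Z_2} yields the stated limit and the asymptotic independence when $(\nu_{11}+\nu_{22})/2<\nu_{12}$. One small correction: since $\alpha_{ij}=2\nu_{ij}$ and the dominant remainder is $O(|t|^2)$, the exponent in Condition (${\mb A3}$) is really $\beta_{ij}=2-2\nu_{ij}$ (not $2-\nu_{ij}$), which drops below $1/2$ when $\nu_{ii}>3/4$ --- this is precisely why the cancellation step you carry out is needed rather than a direct appeal to Theorem~\ref{thm: Joint normality of alpha_11_22}.
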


\section{Simulation Study}
\label{sec: simulation}
In this section, we simulate data from a nonsmooth bivariate Mat\'ern 
process and illustrate that when $(\nu_{11}+\nu_{22})/{2} 
= \nu_{12}$, the decay rates of the bias and mean square error matrix for 
$\hat \nu_{11}$ and $\hat \nu_{22}$ are $n^{-1}$. Then, we compare 
with the case when $(\nu_{11}+\nu_{22})/{2} < \nu_{12}$.

We take $\nu_{11}=0.2$, $\nu_{22}=0.7$, $\nu_{12}=0.45$, $\rho =0.5$, 
$\sigma_1^2=\sigma_2^2=1$ and $a_{11}=a_{22}=a_{12}=1$. We simulated 
the corresponding bivariate Mat\`ern process on regular grids within the 
interval $[0,1]$, where the length of the grid  was set to $1/n$ with 
$n=200,210,220,\ldots,1000$. For each $n$, we used generalized least squares (\textit{abbr. GLS}) to obtain the estimators 
of the fractal indices, say $(\hat\nu_{11}$, $\hat\nu_{22})$ (see, e.g., \cite{Kent_Wood_1997}). Here,  we fixed the number of dilations to $m=50$. The  weight matrix $\Omega_i = (\omega_i^{uv})_{u,v=1}^m$ of  the GLS estimator with a Mat\'ern covariance function is given by
\begin{align*}
&\omega^{uv}_i=\frac{2}{(n-2u+1)(n-2v+1)}\frac{\sum_{h=u}^{n-u}\sum_{\ell=v}^{n-v}(\sum_{j,k=-1}^1 a_ja_k|h-\ell+kv-ju|^{2\nu_{ii}})^2}{(\sum_{j,k=-1}^1 a_ja_k|k-j|^{2\nu_{ii}})^2u^{2\nu_{ii}}v^{2\nu_{ii}}},
\end{align*} 
which can be approximated by plugging in the ordinary least squares estimators of $\nu_{ii}, i =1,2$. To evaluate the efficiency of the estimators, we repeated the above procedure $1000$ times independently.

The $95\%$ confidence intervals for $(\nu_{11},\nu_{22})^\top$ with varying $n$ are 
shown in FIG \ref{Fig1} (a). FIG \ref{Fig1} (c) and (e) show how the 
bias,  marginal variances and cross covariance decrease when $n$ increases 
from $200$ to $1000$.  By fitting the natural logarithm of the absolute value of the bias, marginal 
variances and absolute values of the cross covariance with respect to $\ln n$, we find the power of the decay rate for each is very 
close to $-1$. This is consistent with the conclusions in Proposition
\ref{coro: Bias-matern} and Proposition \ref{coro: Mean Square Error Matrix-matern} when $(\nu_{11}+\nu_{22})/2 
= \nu_{12}$.

Further, we show how the decay rate changes if $(\nu_{11}+\nu_{22})/2 
< \nu_{12}$.  Fixing all previously assigned parameters but 
setting $\nu_{12}$ to $0.6$, we rerun the simulation and repeat the 
estimation procedures. The results are shown on the right side of 
FIG \ref{Fig1}, where we can see that the results are mostly the same,
but the cross covariance decays much faster than $n^{-1}$. Indeed, the power of the decay rate is approximately $-1.5$, which is consistent 
with the conclusion in Proposition \ref{coro: Mean Square Error Matrix-matern}.

\begin{figure}[!htbp]
\centering
\subfigure[]{\includegraphics[width=0.48\textwidth]{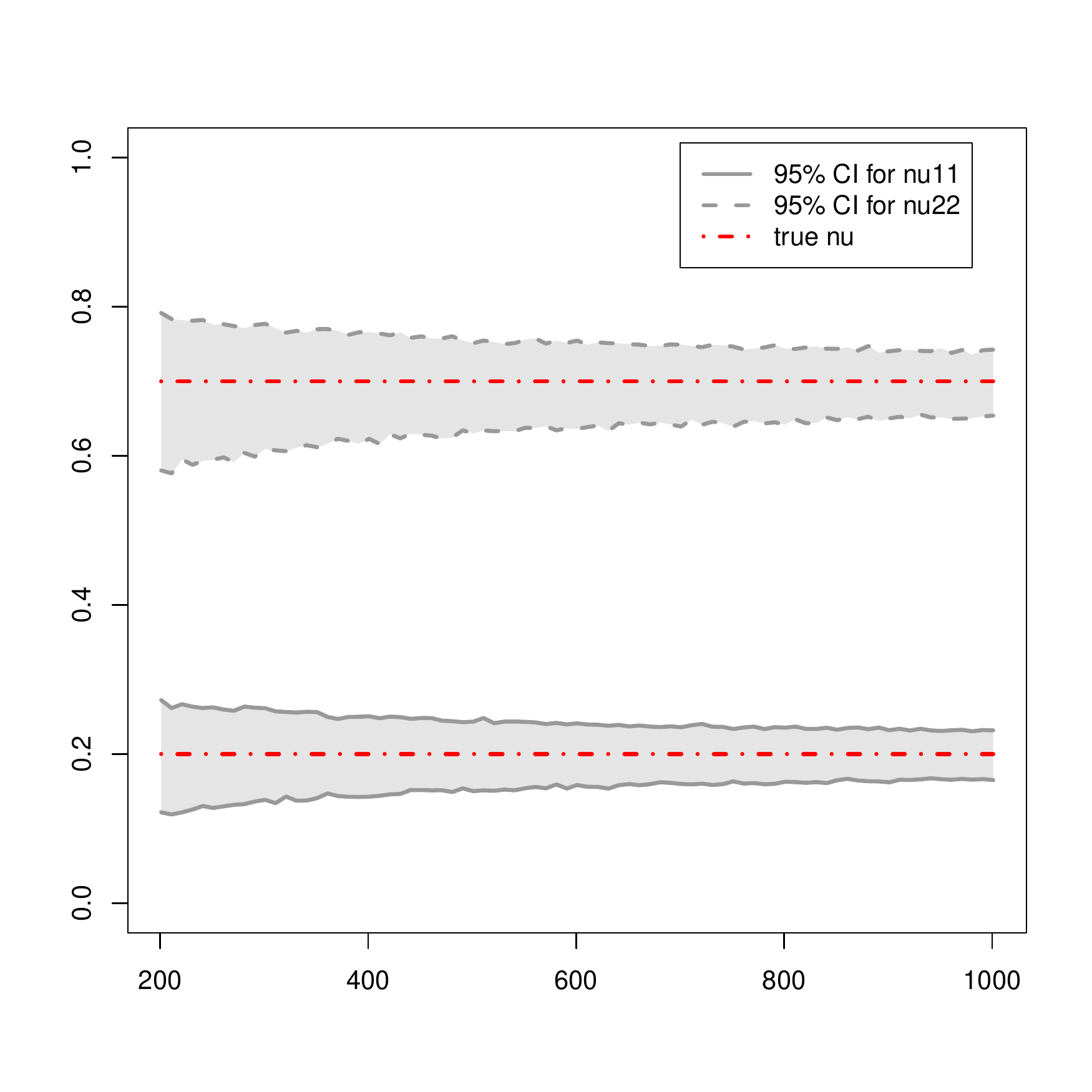}}
\subfigure[]{\includegraphics[width=0.48\textwidth]{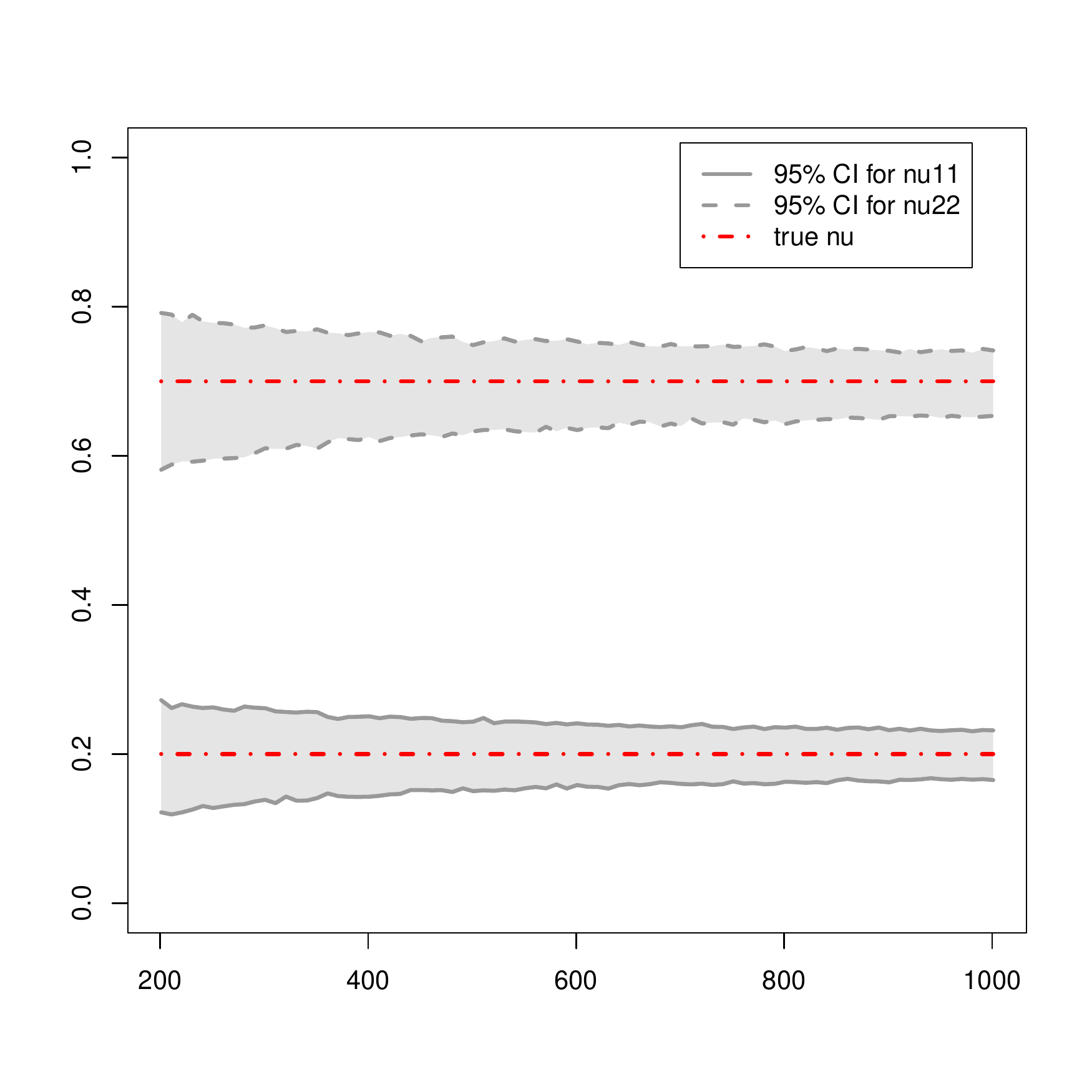}}\\
\subfigure[]{\includegraphics[width=0.48\textwidth]{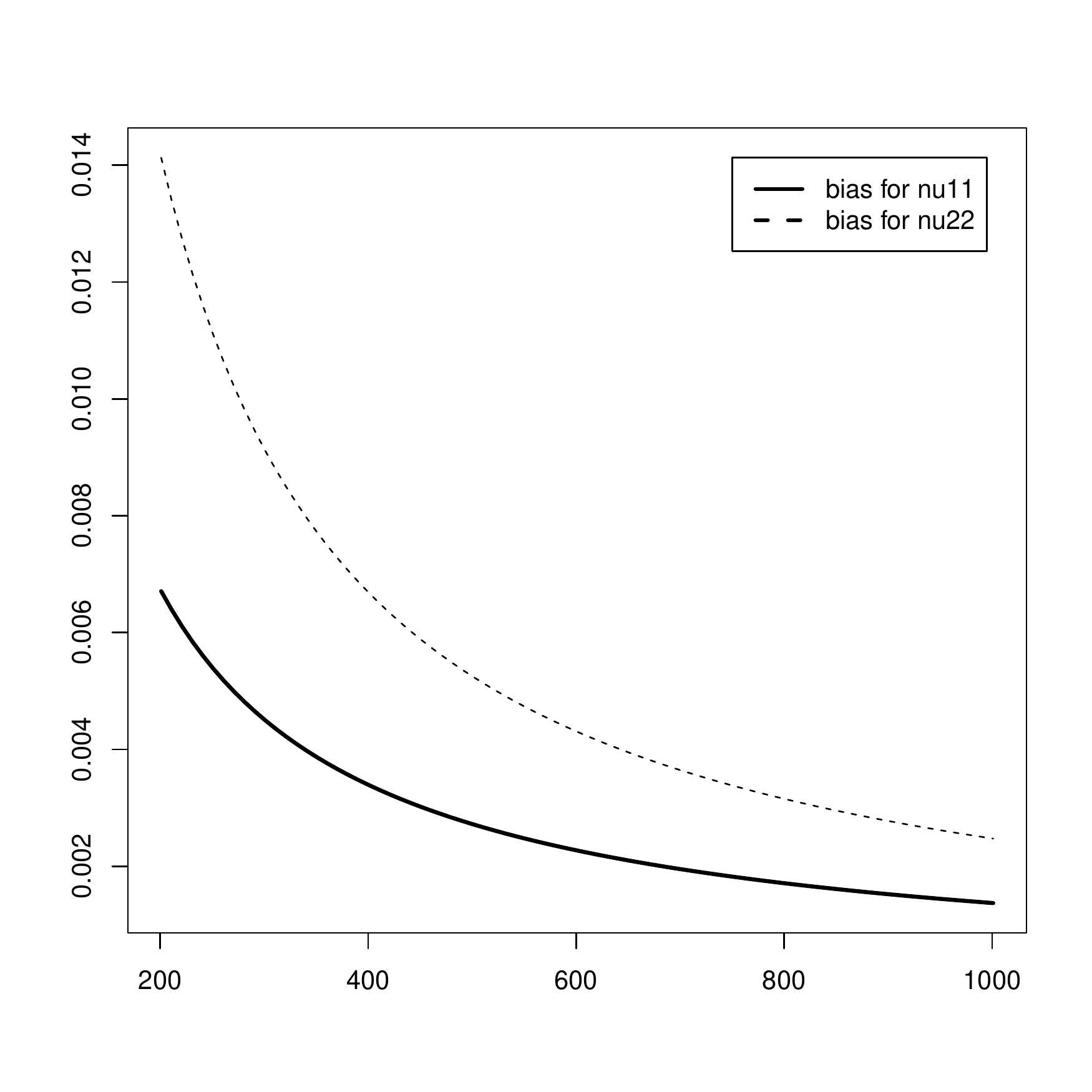}}
\subfigure[]{\includegraphics[width=0.48\textwidth]{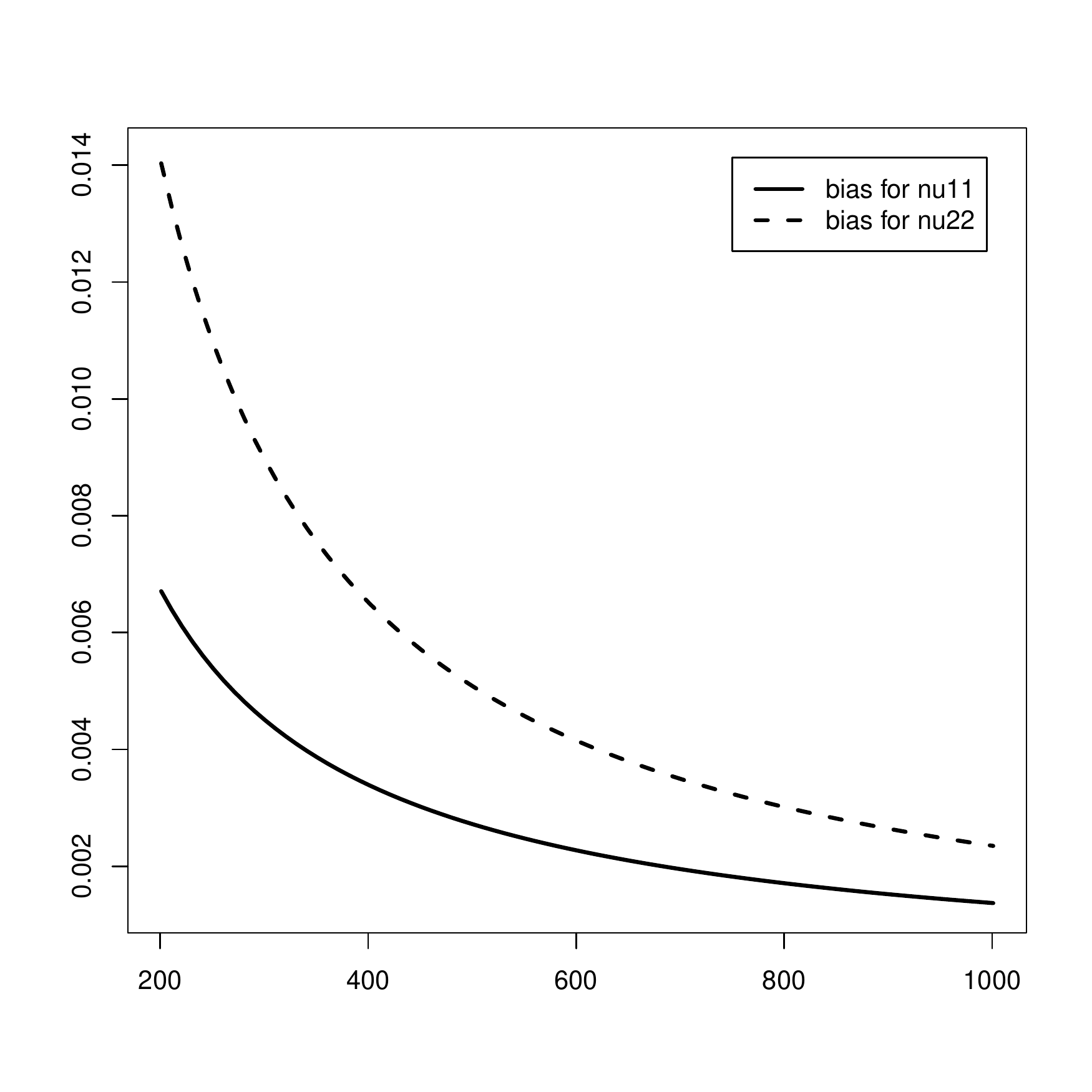}}\\
\subfigure[]{
\includegraphics[width=0.48\textwidth]{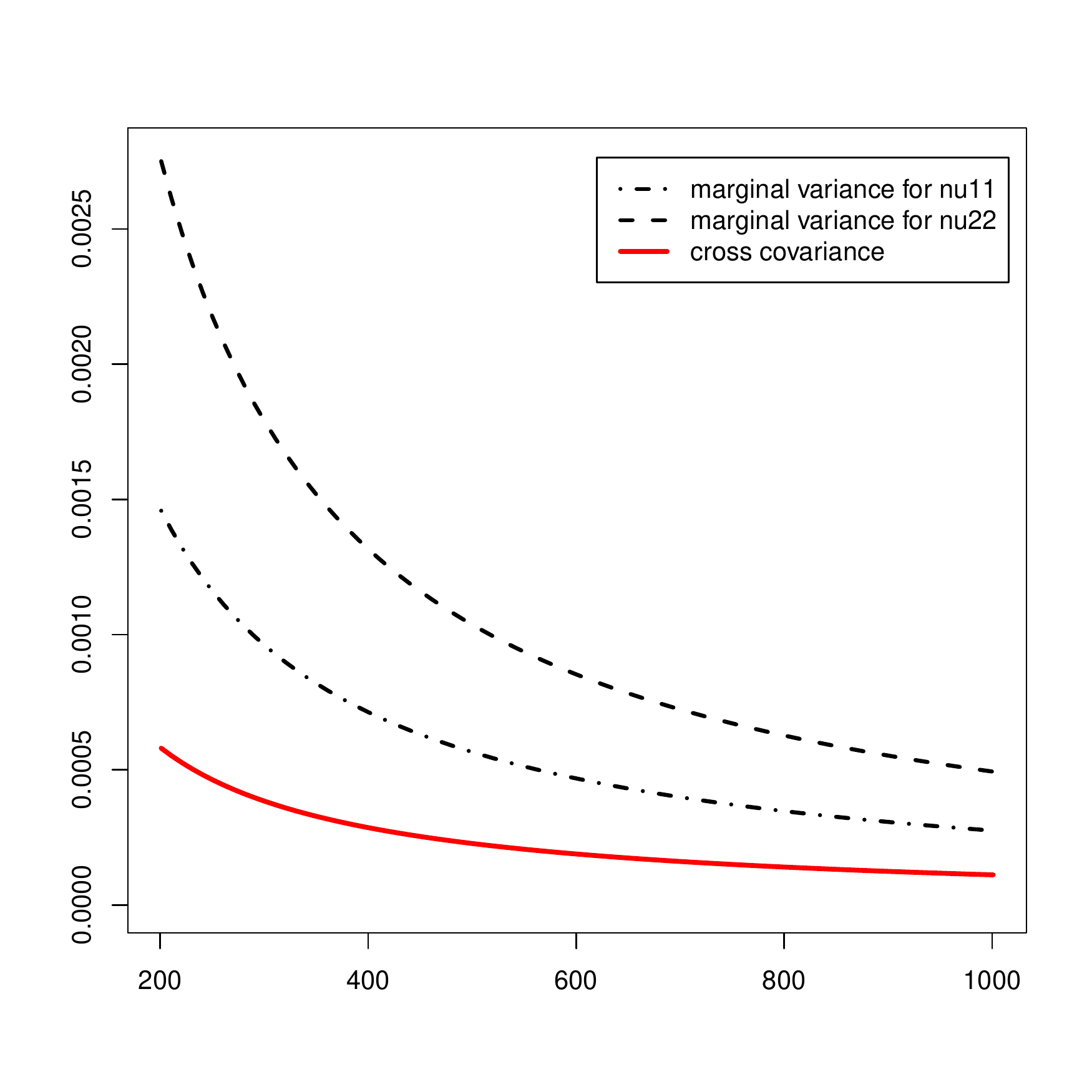}}
\subfigure[]{
\includegraphics[width=0.48\textwidth]{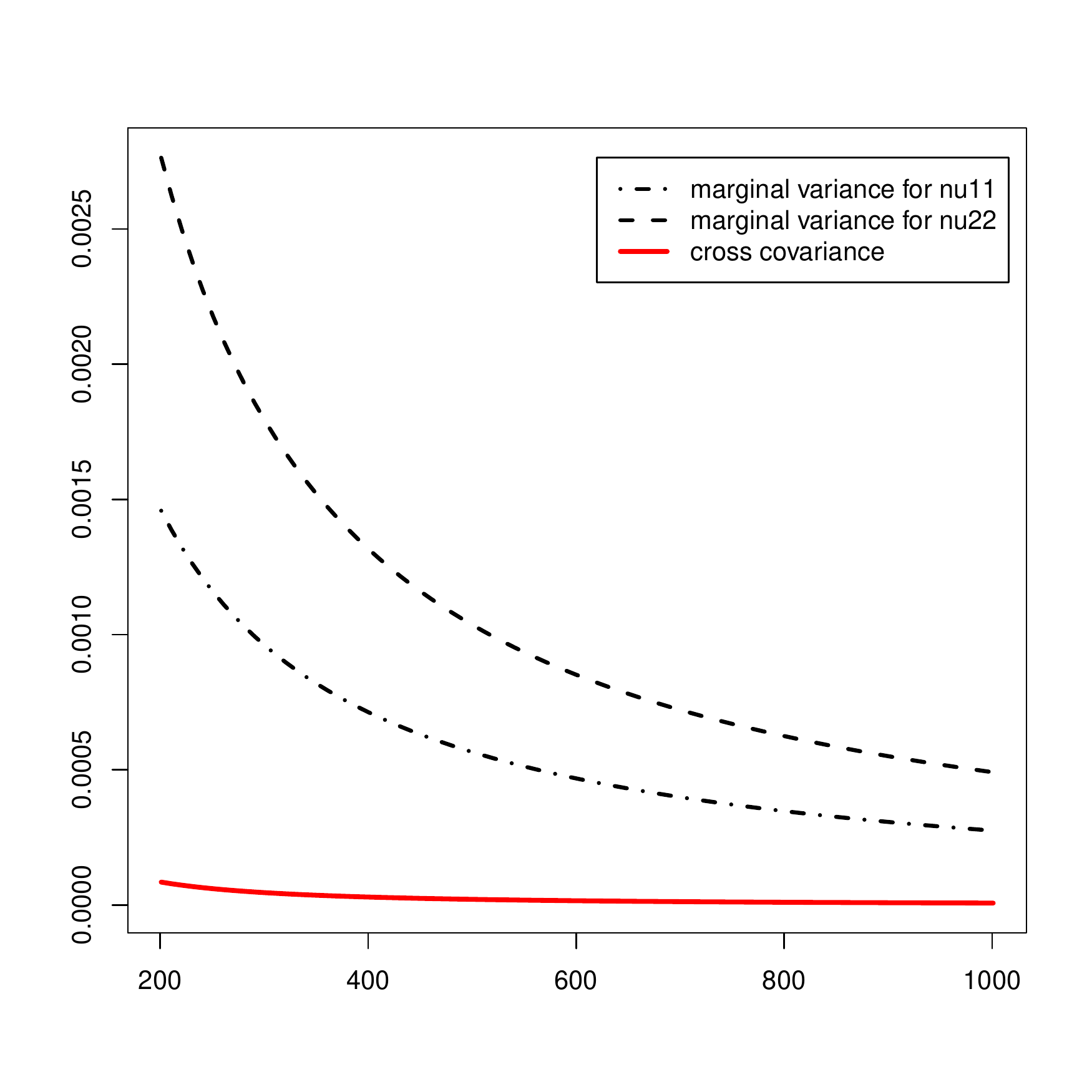}}\\
\caption{Confidence intervals, absolute value of the bias, marginal 
variances and absolute value of the cross covariance for $(\hat \nu_{11},
\hat \nu_{22})$ with varying $n$. The plots on the left side
(i.e., a, c, e) show the results  for $(\nu_{11}
+\nu_{22})/2=\nu_{12}$, whereas those on the right side (i.e., 
b, d, f) correspond to the situation where $(\nu_{11}+\nu_{22})/{2}
<\nu_{12}$.} \label{Fig1}
\end{figure}

\section{Proof of the main results}
\label{sec_proof of main results}
To prove Theorems \ref{thm: Bias} $\sim$
\ref{thm: Joint normality of alpha_11_22}, we make
use of the following key lemma.
\begin{lemma}
\label{lem_1}
For $u =1,\ldots,m$ and $ i=1,2$, let $T_{n,i}^u=(\bar Z_{n,i}^u
-{\rm E}\bar Z_{n,i}^u)/{\rm E}\bar Z_{n,i}^u$.
Then, for any $k\in \mathbb{Z}^+$, there exist positive and finite constants
$C_3$ and $C_4$ (which may depend on $u$ and $k$) such that for all $n \ge 1$ and 
 $\xi>0$,
\begin{align*}
{\rm E} \big[\big|\ln(1+T_{n,i}^u)\big|^k;\, |T_{n,i}^u|> \xi]
\leq C_3\, e^{-C_4\xi\sqrt{n}}.
\end{align*}
\end{lemma}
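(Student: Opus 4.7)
The plan is to produce a subgaussian-style tail bound for $T_{n,i}^u$ and a uniform-in-$n$ bound on the moments of $\ln(1+T_{n,i}^u)$, then combine them by Cauchy--Schwarz with a case split on $\xi$. Writing $\mathbf{Y}=(Y_{n,i}^u(1),\dots,Y_{n,i}^u(n))^\top \sim \mathcal{N}(\mathbf{0},\Sigma_n)$, one has $\bar Z_{n,i}^u = n^{-1}\mathbf{Y}^\top\mathbf{Y}=n^{-1}\mathbf{W}^\top\Sigma_n\mathbf{W}$ for $\mathbf{W}\sim\mathcal{N}(\mathbf{0},I_n)$, so $\bar Z_{n,i}^u$ is a positive Gaussian quadratic form. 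The standard second-difference decay $\sigma_{0,ii}^{uu}(h)=O(|h|^{\alpha_{ii}-4})$, together with $\sigma_{n,ii}^{uu}(h)\to\sigma_{0,ii}^{uu}(h)$ coming from Conditions $(\mathbf{A1})$--$(\mathbf{A2})$, yields the uniform-in-$n$ bounds $\|\Sigma_n\|_{\mathrm{op}}\leq M$ and $\|\Sigma_n\|_F^2\leq M' n$, and also $\mathrm{E}\bar Z_{n,i}^u\asymp 1$.

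The Hanson--Wright inequality for Gaussian quadratic forms then gives
\begin{equation*}
P\bigl(|\bar Z_{n,i}^u-\mathrm{E}\bar Z_{n,i}^u|>t\bigr)\leq 2\exp\!\bigl(-c_1\min(t^2 n^2/\|\Sigma_n\|_F^2,\,tn/\|\Sigma_n\|_{\mathrm{op}})\bigr).
\end{equation*}
Setting $t=\xi\mathrm{E}\bar Z_{n,i}^u$ and using the norm bounds, one checks that $\min(\xi^2 n,\xi n)\geq \xi\sqrt n$ whenever $\xi\sqrt n\geq 1$ and the inequality is trivial otherwise, so
\begin{equation*}
P(|T_{n,i}^u|>\xi)\leq C_5 e^{-C_6\xi\sqrt n}, \qquad \xi>0,\ n\geq 1.
\end{equation*}

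For the moment side, split at $\{|T|\leq\tfrac12\}$ versus $\{|T|>\tfrac12\}$. On the first event $|\ln(1+T)|\leq 2|T|$, and standard sub-exponential moment bounds for the centered quadratic form give $\mathrm{E}[|\ln(1+T_{n,i}^u)|^{2k}\mathbf{1}_{|T|\leq 1/2}]\leq 2^{2k}\mathrm{E}|T_{n,i}^u|^{2k}=O(n^{-k})$. The potentially singular left tail of $\ln(1+T)$ at $T=-1$ is controlled by the pointwise lower bound $\bar Z_{n,i}^u\geq Y_{n,i}^u(1)^2/n$, which yields
\begin{equation*}
P(\bar Z_{n,i}^u<\epsilon)\leq P\bigl(|Y_{n,i}^u(1)|<\sqrt{n\epsilon}\bigr)\leq C_0\sqrt{n\epsilon};
\end{equation*}
a layer-cake integration against the polynomial weight $v^{2k-1}$ on $\{V:=-\ln(1+T_{n,i}^u)>\ln 2\}$ then produces $\mathrm{E}[|\ln(1+T_{n,i}^u)|^{2k}\mathbf{1}_{|T|>1/2}]\leq C_7\sqrt n$. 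Summing, $\mathrm{E}|\ln(1+T_{n,i}^u)|^{2k}\leq C_7\sqrt n$, and applying Cauchy--Schwarz once more on the $|T|>\tfrac12$ piece (together with the exponentially small tail bound at $\xi=\tfrac12$) also yields $\mathrm{E}|\ln(1+T_{n,i}^u)|^k = O(n^{-k/2})$.

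Finally, combine the two ingredients via a case split. For $\xi\sqrt n\leq c_0\ln n$, use the moment bound directly,
\begin{equation*}
\mathrm{E}[|\ln(1+T_{n,i}^u)|^k;\,|T_{n,i}^u|>\xi]\leq \mathrm{E}|\ln(1+T_{n,i}^u)|^k\leq C_8 n^{-k/2}\leq C_8 e^{-C_4\xi\sqrt n},
\end{equation*}
which works provided $C_4 c_0\leq k/2$. For $\xi\sqrt n>c_0\ln n$, apply Cauchy--Schwarz,
\begin{equation*}
\mathrm{E}[|\ln(1+T_{n,i}^u)|^k;\,|T_{n,i}^u|>\xi]\leq \sqrt{C_7\sqrt n}\sqrt{C_5 e^{-C_6\xi\sqrt n}}=C_9 n^{1/4} e^{-C_6\xi\sqrt n/2},
\end{equation*}
and absorb $n^{1/4}=e^{(\ln n)/4}$ into the exponential by choosing $c_0$ large enough relative to $C_6/2-C_4$. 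The main obstacle is precisely this bookkeeping: the moment bound decays only polynomially in $n$ while the tail bound is exponential in $\xi\sqrt n$, so producing constants $(C_3,C_4)$ that depend on neither $n$ nor $\xi$ forces the logarithmic-scale split $\xi\asymp\ln n/\sqrt n$. A consistent choice such as $C_4=kC_6/(2(1+2k))$ with $c_0=(1+2k)/(2C_6)$ makes both regimes succeed simultaneously, completing the proof.
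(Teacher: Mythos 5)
Your proof is correct and follows the same skeleton as the paper's: Cauchy--Schwarz to separate the moment of $\ln(1+T_{n,i}^u)$ from the tail probability, Hanson--Wright (fed by the trace/Frobenius-norm asymptotics from Theorem \ref{thm: asymptotic variance of Z_1&Z_2}) for the tail, and a polynomial-in-$n$ bound on ${\rm E}\,\ln^{2k}(1+T_{n,i}^u)$ obtained by controlling the left tail of $\bar Z_{n,i}^u$ through a pointwise lower bound. The two places where you genuinely diverge are worth recording. First, for the left tail you use $\bar Z_{n,i}^u \ge Y_{n,i}^u(1)^2/n$ and the bounded Gaussian density, giving ${\rm P}(\bar Z_{n,i}^u<\epsilon)\le C\sqrt{n\epsilon}$ and hence ${\rm E}\,\ln^{2k}(1+T_{n,i}^u)\le C\sqrt{n}$; the paper instead uses $\bar Z_{n,i}^u \ge n^{-1}{\rm trace}(\bb\Lambda_{n,i})\,U_{\min}^2$ and the exact density of $U_{\min}^2$, which yields the weaker bound $C_0\,n$. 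Your route is both simpler and sharper. Second, and more substantively, you make explicit the final bookkeeping: the paper's one-line absorption $C_7 n^{1/2}e^{-C_8\xi\sqrt n}\le C_7e^{-C_9\xi\sqrt n}$ is valid only when $\xi\sqrt n\gtrsim \ln n$, whereas the lemma is claimed uniformly over all $\xi>0$; your split at $\xi\sqrt n \asymp \ln n$, using the separate estimate ${\rm E}|\ln(1+T_{n,i}^u)|^k=O(n^{-k/2})$ in the small-$\xi$ regime, is what actually closes this case (in the paper's applications only fixed $\xi$, e.g. $\xi=1/2$, is ever used, so nothing downstream is affected, but your version proves the statement as written). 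One small caveat: your use of $tn/\|\Sigma_n\|_{\mathrm{op}}$ in Hanson--Wright requires the uniform bound $\sup_n\|\Sigma_n\|_{\mathrm{op}}<\infty$, which does hold here via $\|\Sigma_n\|_{\mathrm{op}}\le\sum_h|\sigma_{n,ii}^{uu}(h)|$ and the uniform decay in Lemma \ref{lem: asymptotic of Y on h}, but you should cite that lemma rather than only the pointwise convergence $\sigma_{n,ii}^{uu}(h)\to\sigma_{0,ii}^{uu}(h)$.
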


The proof of Lemma \ref{lem_1} is given at the end of this section.
Now, we proceed to prove our main theorems.
\begin{proof}[Proof of Theorem \ref{thm: Bias}]
Recall that $T_{n,i}^u=(\bar Z_{n,i}^u-{\rm E}\bar Z_{n,i}^u)/{\rm E}\bar Z_{n,i}^u.$ 
Then,
\begin{align}
\label{alpha_ii div two}
\hat \alpha_{ii} = \sum_{u=1}^m L_{u,i}\ln(1+T_{n,i}^u)+
\sum_{u=1}^m L_{u,i} \ln \E{\bar Z_{n,i}^u}.
\end{align}
It follows from (\ref{Eq:Ztau}) that
\begin{align*}
{\rm E}\bar Z^u_{n,i}=\sigma_{n,ii}^{uu}(0)=C_i\, u^{\alpha_{ii}}
(1+O(n^{-\beta_{ii}})),
\end{align*}
Hence, using the conditions
on $L_{u,i}$ in \eqref{L_u,i assumption}, we conclude
\begin{align}
\label{sum_L_E_Z}
\sum_{u=1}^mL_{u,i}\ln{\rm E}\bar Z^u_{n,i}=\alpha_{ii}+O(n^{-\beta_{ii}}).
\end{align}

Next, we estimate the first sum in \eqref{alpha_ii div two}.  By Taylor's expansion, 
we obtain
\begin{align*}
\ln(1+T_{n,i}^u)=T_{n,i}^u-\frac{1}{2}(T_{n,i}^u)^2+R_{n,i}^u,
\end{align*}
where $R_{n,i}^u$ is the residual term.
Hence,
\begin{align*}
{\rm E}[\ln(1+T_{n,i}^u)]={\rm E}\Big[T_{n,i}^u-\frac{1}{2}(T_{n,i}^u)^2+R_{n,i}^u\Big]
=-\frac{1}{2}{\rm E}(T_{n,i}^u)^2+\E R_{n,i}^u.
\end{align*}
By Theorem \ref{thm: asymptotic variance of Z_1&Z_2}, it is easy to verify that
\begin{align}
\label{ZTsquare_order}
{\rm E}(T_{n,i}^u)^2 = O(n^{-1}).
\end{align}
Using the fact that if $|T_{n,i}^u|\leq \xi\leq 1/2$, then $|R_{n,i}^u|\leq \xi (T_{n,i}^u)^2$, we have
\begin{align}
\label{ZR_T}
{\rm E}[|R_{n,i}^u|;\, |T_{n,i}^u|\leq \xi]\leq \xi {\rm E}[(T_{n,i}^u)^2;\,|T_{n,i}^u|\leq \xi]=O(n^{-1}),
\end{align}
where the last equality follows from \eqref{ZTsquare_order}.
On the other hand, by applying Lemma \ref{lem_1}, 
the Cauchy-Schwarz inequality and \eqref{ZTsquare_order}, we obtain
\begin{align}
\label{ZR_T2}
&{\rm E} [|R_{n,i}^u|;\, |T_{n,i}^u|> \xi]\nonumber\\
&\leq {\rm E} [|\ln(1+T_{n,i}^u)|+|T_{n,i}^u|+\frac{1}{2}(T_{n,i}^u)^2; \, |T_{n,i}^u|> \xi]=O(n^{-1}).
\end{align}
By combining \eqref{ZTsquare_order}, \eqref{ZR_T} and \eqref{ZR_T2}, we obtain
\begin{align}
\label{sum_L_E_log(1+T)}
{\rm E} [\ln(1+T_{n,i}^u)]=O(n^{-1}).
\end{align}
This, together with \eqref{alpha_ii div two} and \eqref{sum_L_E_Z}, proves Theorem \ref{thm: Bias}.
\end{proof}

\begin{proof}[Proof of Theorem \ref{thm: Mean Suare Error Matrix}] 
For $i = 1, 2$, we expand ${\rm E}\big[(\hat \alpha_{ii}
-\alpha_{ii})^2\big]$ as follows.
\begin{align}
\label{mean square error of alpha_ii}
{\rm E}\big[(\hat \alpha_{ii}-\alpha_{ii})^2\big]
&=\sum_{u=1}^m\sum_{v=1}^mL_{u,i}L_{v,i}{\rm E}\Big[\big(\ln (1+T_{n,i}^u)
+\ln{\rm E}\bar Z_{n,i}^u-\alpha_{ii}\ln u \big)\nonumber\\
&\quad \qquad \times \big(\ln (1+T_{n,i}^v)
+\ln{\rm E}\bar Z_{n,i}^v-\alpha_{ii}\ln v\big)\Big]\nonumber\\
&=\sum_{u=1}^m\sum_{v=1}^mL_{u,i}L_{v,i}{\rm E} 
\big[\ln (1+T_{n,i}^u) \ln (1+T_{n,i}^v)\big]\nonumber\\
&\quad+\sum_{u=1}^m\sum_{v=1}^mL_{u,i}L_{v,i}
{\rm E} \big[\ln (1+T_{n,i}^u)\big] 
\big(\ln{\rm E}\bar Z_{n,i}^v-\alpha_{ii}\ln v\big)\nonumber\\
&\quad+\sum_{u=1}^m\sum_{v=1}^mL_{u,i}L_{v,i} 
\big(\ln{\rm E}\bar Z_{n,i}^u-\alpha_{ii}\ln u \big)
{\rm E}\big[\ln (1+T_{n,i}^v)\big] \nonumber\\
&\quad+\sum_{u=1}^m\sum_{v=1}^mL_{u,i}L_{v,i} 
\big(\ln{\rm E}\bar Z_{n,i}^u-\alpha_{ii}\ln u \big)
\big(\ln{\rm E}\bar Z_{n,i}^v-\alpha_{ii}\ln v\big)\nonumber\\
&\triangleq \text{I}+\text{II}+\text{III}+\text{IV}.
\end{align}
By  \eqref{sum_L_E_Z} and \eqref{sum_L_E_log(1+T)}, we have
\begin{align}
\label{sum_log(1+T)_E(Z)-alpha}
&\text{II}=O(n^{-1-\beta_{ii}}),\ \ \text{III}=O(n^{-1-\beta_{ii}}),\ \ \text{IV}=O(n^{-2\beta_{ii}}).
\end{align}
To bound the first term I in \eqref{mean square error of alpha_ii}, 
we take $\xi= 1/2$ and decompose the probability space into the union 
of the following four disjoint events,  
$\{|T_{n,i}^u|\leq\xi,\, |T_{n,i}^v|\leq \xi\}$, 
$\{|T_{n,i}^u|> \xi,\, |T_{n,i}^v|\leq \xi\}$, 
$\{|T_{n,i}^u|\leq \xi,\, |T_{n,i}^v|> \xi\}$, and 
$\{|T_{n,i}^u|> \xi,\, |T_{n,i}^v|>\xi\}$.
\begin{itemize}
\item[i).] For the event $\{|T_{n,i}^u|\leq\xi,\, |T_{n,i}^v|\leq\xi\}$, 
we use the elementary inequality $|\ln(1+x)|  \le 2 |x|$ for all 
$|x|\le \xi$  to derive
\begin{align*}
&|\ln (1+T_{n,i}^u)\ln (1+T_{n,i}^v)|
\leq 4\,|T_{n,i}^u||T_{n,i}^v|.
\end{align*}
It follows from the Cauchy-Schwarz inequality and Theorem 
\ref{thm: asymptotic variance of Z_1&Z_2} that
\begin{align*}
{\rm E}\big[\ln (1+T_{n,i}^u)\ln (1+T_{n,i}^v);\, |T_{n,i}^u|\leq\xi,|T_{n,i}^v|\leq\xi\big]=O(n^{-1}).
\end{align*}
\item[ii).] By Lemma \ref{lem_1}, we have
\begin{align*}
&{\rm E}\big[\big|\ln (1+T_{n,i}^u)\ln (1+T_{n,i}^v)\big|;\, |T_{n,i}^u|>\xi,|T_{n,i}^v|\leq\xi \big]\\
&\leq \ln 2\, {\rm E}[|\ln (1+T_{n,i}^u)|;\, |T_{n,i}^u|>\xi]\\
&=o(n^{-1}).
\end{align*}

\item[iii).] As in ii), we have
\begin{align*}
&{\rm E}[\ln (1+T_{n,i}^u)\ln (1+T_{n,i}^v);\, |T_{n,i}^u|\leq\xi,|T_{n,i}^v|>\xi]\nonumber\\
&\leq \ln 2\, {\rm E}[|\ln (1+T_{n,i}^v)|;\, |T_{n,i}^v|>\xi]\\
&=o(n^{-1}).
\end{align*}
\item[iv).] By Lemma \ref{lem_1} and the Cauchy-Schwarz inequality, we have
\begin{align*}
&{\rm E}\big[\big|\ln (1+T_{n,i}^u)\ln (1+T_{n,i}^v)\big|;\,
|T_{n,i}^u|>\xi,\, |T_{n,i}^v|>\xi\big]\\
&\leq \sqrt{{\rm E}\big[\ln^2 (1+T_{n,i}^u);\, |T_{n,i}^u|>\xi]}\sqrt{{\rm E}\big[\ln^2(1+T_{n,i}^v);\, |T_{n,i}^v|>\xi\big]}\\
&=o(n^{-1}).
\end{align*}
\end{itemize}
By combining i)-(iv) above, we see that
\begin{align}
\label{log(1+T)log(1+T)}
\text{I}
=O(n^{-1}).
\end{align}
By \eqref{sum_log(1+T)_E(Z)-alpha} and \eqref{log(1+T)log(1+T)}, we have
\begin{align*}
{\rm E}\big[(\hat \alpha_{ii}-\alpha_{ii})^2 \big]
=O(n^{-1})+ O(n^{-2\beta_{ii}}).
\end{align*}

Next, we study the cross term ${\rm E}\big[(\hat \alpha_{11}-\alpha_{11})(\hat \alpha_{22}-\alpha_{22})\big]$, which can be written as
\begin{align}
\label{cross error of alpha_11, alpha_22}
&{\rm E}\big[(\hat \alpha_{11}-\alpha_{11})(\hat \alpha_{22}-\alpha_{22})\big]
\nonumber\\
&=\sum_{u=1}^m\sum_{v=1}^mL_{u,1}L_{v,2}{\rm E}\big[\ln (1+T_{n,1}^u)\ln (1+T_{n,2}^v)\big]\nonumber\\
&\quad+\sum_{u=1}^m\sum_{v=1}^mL_{u,1}L_{v,2}{\rm E}\big[\ln (1+T_{n,1}^u)\big]
(\ln{\rm E}\bar Z_{n,2}^v-\alpha_{22}\ln v)\nonumber\\
&\quad+\sum_{u=1}^m\sum_{v=1}^mL_{u,1}L_{v,2}
\big(\ln{\rm E}\bar Z_{n,1}^u-\alpha_{11}\ln u\big)\,
{\rm E}\big[\ln (1+T_{n,2}^v)\big]\nonumber\\
&\quad+\sum_{u=1}^m\sum_{v=1}^mL_{u,1}L_{v,2}
\big(\ln{\rm E}\bar Z_{n,1}^u-\alpha_{11}\ln u\big)
\big(\ln{\rm E}\bar Z_{n,2}^v-\alpha_{22}\ln v\big)\nonumber\\
&\triangleq \text{I}+\text{II}+\text{III}+\text{IV}.
\end{align}
Applying similar arguments as used in evaluating 
$\E\big[(\hat \alpha_{ii}-\alpha_{ii})^2\big]$,
we obtain
\begin{align} \label{Eq:III}
&\text{II}=O(n^{-1-\beta_{22}}),\ \ \ \text{III}=O(n^{-1-\beta_{11}}),\ \ \ \text{IV}=O(n^{-\beta_{11}-\beta_{22}}).
\end{align}
In order to bound the term \text{I}, 
we distinguish two cases.
\begin{itemize}
\item[1).] If $(\alpha_{11}+\alpha_{22})/2=\alpha_{12}$, then by a similar 
argument as that for proving \eqref{log(1+T)log(1+T)} (using Lemma \ref{lem_1} 
and Theorem \ref{thm: asymptotic variance of Z_1&Z_2})
and the fact that $\mb \Phi_{0,12}\neq \mb 0$, we have
\begin{align*}
\sum_{u=1}^m\sum_{v=1}^mL_{u,1}L_{v,2}{\rm E}\big[\ln (1+T_{n,1}^u)\ln (1+T_{n,2}^v)\big]=O(n^{-1}).
\end{align*}
This, together with \eqref{cross error of alpha_11, alpha_22} and \eqref{Eq:III}, 
implies
\begin{align*}
&{\rm E}\big[(\hat \alpha_{11}-\alpha_{11})
(\hat \alpha_{22}-\alpha_{22})\big]\nonumber\\
&=O(n^{-1})+O(n^{-1-\beta_{11}})+O(n^{-1-\beta_{22}})
+O(n^{-\beta_{11}-\beta_{22}}).
\end{align*}
\item[2).]  If $(\alpha_{11}+\alpha_{22})/2<\alpha_{12}$, then 
by an argument similar to that for proving \eqref{log(1+T)log(1+T)} 
and the fact that $\mb \Phi_{0,12}= \mb 0$, we obtain
\begin{align*}
\sum_{u=1}^m\sum_{v=1}^mL_{u,1}L_{v,2}{\rm E}\big[\ln (1+T_{n,1}^u)
\ln (1+T_{n,2}^v)\big]=o(n^{-1}).
\end{align*}
Consequently, 
\begin{align*}
&{\rm E} \big[(\hat \alpha_{11}-\alpha_{11})(\hat \alpha_{22}-\alpha_{22})\big]
\nonumber\\
&=o(n^{-1})+O(n^{-1-\beta_{11}})+O(n^{-1-\beta_{22}})+O(n^{-\beta_{11}-\beta_{22}}).
\end{align*}
\end{itemize}
Therefore, we have proved  (\ref{Eq:16}) and (\ref{Eq:17}).
\end{proof}

\begin{proof}[Proof of Theorem \ref{thm: Joint normality of alpha_11_22}]
Recall (\ref{Eq:Ztau}) for ${\rm E}\bar Z^u_{n,i}$ and denote
$$\bb \tau= (\tau_{1,1},\ldots,\tau_{m,1},\tau_{1,2},\ldots,\tau_{m,2})^\top.$$
Since 
$\beta_{ii}>1/2$ for $ i =1,2$, we have
$\sqrt{n}({\rm E} \mb {\bar Z}_n-\bb \tau)\rightarrow 0$  as 
$n\rightarrow \infty$. By Theorem
\ref{thm: asymptotic normality of bar Z} and Slutsky's theorem,
we conclude that
\begin{align*}
n^{1/2}(\mb{\bar Z}_n-\bb \tau)\xrightarrow{d} \mathcal{N}_{2m}(\mb 0,\bb\Phi_0), \
\text{ as }\ n\rightarrow \infty.
\end{align*}
Define a mapping $\mathbf f: \mathbb{R}^{2m} \rightarrow \mathbb{R}^2$, 
$\forall \mathbf{x}
\triangleq(x_{1,1},\ldots,x_{m,1},x_{1,2},\ldots,x_{m,2})\in \mathbb{R}^{2m}$,
\begin{align*}
\mathbf f(\mathbf{x}) := \Big(\sum_{u=1}^m L_{u,1}\ln x_{u,1},\sum_{u=1}^m L_{u,2}\ln x_{u,2}\Big)^\top.
\end{align*}
Hence, it is easy to verify that $\mb f(\cdot)$ is continuously 
differentiable, $\hat {\bb\alpha} =\mathbf f(\mb{\bar Z}_n)$ and 
$\bb \alpha =\mb f(\bb \tau)$. By applying the multivariate delta
method \cite[Theorem 8.22]{Lehmann_2006}, we conclude that
\begin{align*}
\sqrt{n} (\hat {\bb\alpha}-\bb\alpha) \xrightarrow d \mathcal{N}(\mb 0, 
\triangledown \mb f(\bb \tau)^\top
\bb \Phi_0 \triangledown \mb f(\bb \tau)),
\end{align*}
where $\triangledown \mb f(\bb \tau)=(\widetilde{\mathbf L}_1^\top, 
\widetilde{\mathbf L}_2^\top)^\top$.
\end{proof}

\begin{proof}[Proof of Lemma \ref{lem_1}]
By H\"older's inequality, we have
\begin{align}
\label{holder ineq}
\begin{split}
&{\rm E} \big[\big|\ln(1+T_{n,i}^u)\big|^k;\, |T_{n,i}^u|> \xi \big]\\
&\leq \sqrt{{\rm E}\big[\ln^{2k}(1+T_{n,i}^u)\big]}
\sqrt{\P( |T_{n,i}^u|> \xi)}.
\end{split}
\end{align}
First, we establish an upper bound for $\P( |T_{n,i}^u|> \xi)$.
By (\ref{sigma_n,ii^uv(h)}), we see that the covariance matrix of the 
random vector 
$\mb Y_{n,i} =(Y_{n,i}^u(1),\ldots,Y_{n,i}^u(n))^\top$ is 
$ \bb\Sigma_{n,i}
=(\sigma_{n,ii}^{uu}(j-k))_{j,k=1}^n$. Let $\bb\Lambda_{n,i}
={\rm diag}(\lambda_{j,i})_{j=1}^n$ be the diagonal matrix whose
diagonal entries are the eigenvalues of
$\bb\Sigma_{n,i}$, and let $\mb U=(U_1,\ldots,U_n)^\top$, where $U_j\stackrel{iid}
{\sim} \mathcal{N}(0,1),\ j=1,\ldots,n$. Then, we have
\begin{align*}
\bar Z_{n,i}^u=\frac{1}{n}\mb Y_{n,i}^\top \mb Y_{n,i}
\stackrel{d}{=}\frac{1}{n}\mb U^\top\bb\Lambda_{n,i}\mb U.
\end{align*}
Since $n{\rm E}\bar Z_{n,i}^u={\rm E}(\mb U^\top
\bb\Lambda_{n,i}\mb U)={\rm trace}(\bb\Lambda_{n,i})$, we
apply the Hanson and Wright inequality \citep{Hanson_1971}
to the tail probability of the quadratic forms to obtain
\begin{align}
\label{HansonWrightBound}
&\P\big( |T_{n,i}^u|> \xi\big)=
\P\big( |\mb U^\top
\bb\Lambda_{n,i}\mb U-{\rm trace}(\bb\Lambda_{n,i})|
> {\rm trace}(\bb\Lambda_{n,i}\big) \xi)\nonumber\\
\leq & \exp\bigg\{-{\rm {min}}\bigg(C_5\xi\frac{{\rm trace}(\bb\Lambda_{n,i})}
{\|\bb\Lambda_{n,i}\|_2},
C_6 \xi^2\frac{({\rm trace}(\bb\Lambda_{n,i}))^2}
{\|\bb\Lambda_{n,i}\|_F^2}\bigg)\bigg\},
\end{align}
where $\|\bb\Lambda_{n,i}\|_2$ and $\|\bb\Lambda_{n,i}\|_F$
are the $\ell_2$ norm and Frobenius norm of $\bb\Lambda_{n,i}$,
respectively, and $C_5,\, C_6$ are positive constants independent
of $\bb\Lambda_{n,i}$, $n$ and $\xi$.

Note that
\begin{align*}
\|\bb\Lambda_{n,i}\|_F^2={\rm trace}(\bb\Lambda_{n,i}^2)
={\rm trace}(\bb\Sigma_n^2)=\sum_{j=1,k=1}^n(\sigma_{n,ii}^{uu}(j-k))^2,
\end{align*}
and
\begin{align*}
\phi_{n,ii}^{uu}=\text{Var}(\bar Z_{n,ii}^u)=\frac{2}{n^2}\sum_{j=1,k=1}^n(\sigma_{n,ii}^{uu}(j-k))^2.
\end{align*}
By Theorem \ref{thm: asymptotic variance of Z_1&Z_2}, we have
\begin{align*}
\|\bb\Lambda_{n,i}\|_F^2=\frac{n^2}{2}\phi_{n,ii}^{uu}
\asymp n\,\phi_{0,ii}^{uu}.
\end{align*}
By combining the above with the facts that $\|\bb\Lambda_{n,i}\|_2\leq
\|\bb\Lambda_{n,i}\|_F$ and ${\rm trace}(\bb\Lambda_{n,i})/n
= {\rm E}\bar Z_{n,i}^u \rightarrow C_i u^{\alpha_{ii}}$ as 
$n \to \infty$, we have
\begin{align*}
\frac{{\rm trace}(\bb\Lambda_{n,i})}{\|\bb\Lambda_{n,i}\|_2}
=\sqrt{n} \frac{{\rm trace}(\bb\Lambda_{n,i})/n}
{\|\bb\Lambda_{n,i}\|_2/\sqrt{n}}\gtrsim 
 u^{\alpha_{ii}}(\phi_{0,ii}^{uu})^{-1/2}\sqrt{n},
\end{align*}
and
\begin{align*}
\frac{({\rm trace}(\bb\Lambda_{n,i}))^2}{\|\bb\Lambda_{n,i}\|_F^2}
\asymp u^{2\alpha_{ii}}(\phi_{0,ii}^{uu})^{-1}n, \
\text{as}\ n\rightarrow \infty.
\end{align*}
Hence, when $n\rightarrow \infty$, \eqref{HansonWrightBound}
decays exponentially with rate $\sqrt{n}$. Consequently,
when $n$ is sufficiently large,
\begin{align}
\label{Tn tail upper bound}
&\P( |T_{n,i}^u|> \xi)\leq e^{-C_0 u^{\alpha_{ii}}
(\phi_{0,ii}^{uu})^{-1/2}\xi\sqrt{n}}.
\end{align}
Next, we prove ${\rm E}\big[\ln^{2k}(1+T_{n,i}^u)\big]$ is
bounded by $C_0n$. It is easy to see that
\begin{align*}
{\rm E}\big[\ln^{2k}(1+T_{n,i}^u)\big] \leq 2^{2k-1}
\Big({\rm E}\ln^{2k}\bar Z_{n,i}^u +\ln^{2k}
({\rm E}\bar Z_{n,i}^u)\Big).
\end{align*}
For any fixed $k\in \mathbb{Z}^+$, there exists $c_k>1$ such
that $\ln^{2k}x\leq x^2, \forall x>c_k$.
Using the fact that ${\rm E}\bar Z_{n,i}^u\to C_i\,
u^{\alpha_{ii}}$ and $n\mb \Phi_n\to\mb \Phi_0$ as $n \to \infty$, 
we obtain that for all sufficiently large $n$,
\begin{align*}
{\rm E}(\ln^{2k}\bar Z_{n,i}^u;\, \bar Z_{n,i}^u>c_k)
\leq {\rm E} (\bar Z_{n,i}^u)^2
=({\rm E}\bar Z_{n,i}^u)^2+\text{Var}(\bar Z_{n,i}^u)
\asymp u^{2\alpha_{ii}}. 
\end{align*}
Therefore, the problem is reduced to proving
${\rm E}(\ln^{2k} \bar Z_{n,i}^u;\,\bar Z_{n,i}^u\leq c_k)
\leq C_0\,n$.
It is sufficient to show
\begin{align*}
{\rm E}\big(\ln^{2k}\bar Z_{n,i}^u;\,\bar Z_{n,i}^u\leq 1\big)
\leq C_0\,n.
\end{align*}
Let $U^2_{{\rm {min}}}={\rm {min}}_{1\leq i \leq n}U_i^2$. Then,
\begin{align}\label{Eq:ZU}
\bar Z_{n,i}^u\stackrel{d}{=}\frac{1}{n}\sum_{j=1}^n \lambda_{j,i}U_j^2
\geq \frac{{\rm trace}(\bb\Lambda_{n,i})}{n}U_{{\rm {min}}}^2
\geq \frac{1}{2} C_i u^{\alpha_{ii}}U_{{\rm {min}}}^2
\triangleq C\,U_{{\rm {min}}}^2.
\end{align}
where the second inequality holds for sufficiently large $n$.

Let $f_n(x)$ be the density function of $U_{{\rm {min}}}^2$, that is $\forall x>0$,
\begin{align*}
f_n(x) =\frac{n}{\sqrt{2\pi x}}\, e^{-x/2}\Big(2\int_{\sqrt x}^\infty \frac{1}{\sqrt{2\pi}}e^{-y^2/2}dy\Big)^{n-1}.
\end{align*}
It is easy to verify that $f_n(x)\leq n/{\sqrt{2\pi x}}$.
It follows from (\ref{Eq:ZU}) that
\begin{align*}
&{\rm E} \big(\ln^{2k}\bar Z_{n,i}^u;\, \bar Z_{n,i}^u\leq 1\big)
\leq {\rm E}\big[\ln^{2k}\big(CU_{{\rm {min}}}^2\big);\,U_{{\rm {min}}}^2 \leq 1/C\big]\\
&=\int_0^{\frac{1}{C}}\ln^{2k}(Cx)f_n(x)dx\leq \frac{n\sqrt{C}}{\sqrt{2\pi}}
\int_{0}^1 y^{-\frac{1}{2}}\ln^{2k}y dy = C_0\,n,
\end{align*}
for all sufficiently large $n$. Therefore, we have proven
\begin{align}
\label{upper bound log Tn}
{\rm E}\big[\ln^{2k}(1+T_{n,i}^u)\big]\leq C_0\,n.
\end{align}
By \eqref{holder ineq}, \eqref{Tn tail upper bound} and
\eqref{upper bound log Tn}, we obtain that when $n$ is large,
\begin{align*}
{\rm E} \big[\big|\ln(1+T_{n,i}^u)\big|^k;\, |T_{n,i}^u|> \xi\big]
\leq C_7 n^{1/2}e^{-C_8\xi\sqrt{n}}\leq C_7 e^{-C_9\xi\sqrt{n}},
\end{align*}
where $C_7$, $C_8$ and $C_9$ are independent of $n$ and $\xi$ and $C_9 < C_8$.
\end{proof}

\section{Appendix}
\label{sec: Appendix}

\begin{proof}[A. Remark on Condition ($\mb A1$)] Let $F_{11}$, $F_{22}$ and $F_{12}$
be the corresponding spectral measures of $C_{11}(\cdot)$, $C_{22}(\cdot)$
and $C_{12}(\cdot)$. By \eqref{covariance assumption} and the Tauberian
Theorem (see, e.g., \cite{Stein_1999Interpo}), we have that as $x \to \infty$,
\begin{align*}
F_{ij}(x,\infty)\sim C_{ij}(0)-C_{ij}(1/x)\sim \widetilde{c}_{ij} |x|^{-\alpha_{ij}},\, i,j=1,2,
\end{align*}
where $ \widetilde{c}_{ii} = c_{ii}$ for $i = 1, 2$ and $ \widetilde{c}_{12}
= c_{12}\rho \sigma_1\sigma_2$.

According to Cramer's theorem (\cite{Chiles_1999}, \cite{Wackernagel_2003}, and \cite{yaglom1987correlation} p.315),
a necessary and sufficient condition for the matrix (\ref{Cov}) to be a valid
covariance function for $\mathbf{X}(t)$ is
\begin{align*}
(F_{12}(B))^2\leq F_{11}(B)F_{22}(B), \ \ \ \forall B\in \mathcal{B}(\R).
\end{align*}
Hence, it is necessary to assume the following conditions on the parameters
$\alpha_{ij}$, $c_{ij}$, $\sigma_i$ ($i = 1, 2$) and $\rho$:
\begin{equation}
\label{restriction on smoothness parameter}
\begin{split}
&\frac{\alpha_{11}+\alpha_{22}}{2} < \alpha_{12}, \ \hbox{ or} \\
&\frac{\alpha_{11}+\alpha_{22}}{2} = \alpha_{12} \ \ \hbox{ and }\  \
c_{12}^2 \rho^2 \sigma_1^2 \sigma_2^2 \le c_{11} c_{22}.
\end{split}
\end{equation}
This shows that (\ref{Eq:valid}) is only slightly stronger than
(\ref{restriction on smoothness parameter}) in the second case, which guarantees that
the bivariate process $\mathbf{X}$ is not degenerate and satisfies (\ref{21}) below. 
\end{proof}

\begin{proof}[B. Proof of \eqref{Xiao95}]
In order to apply Theorem 2.1 in \cite{Xiao_1995} to prove (\ref{Xiao95}), it is sufficient to verify that
there is a constant $c>0$ such that
\begin{equation}\label{21}
{\rm detCov}\big(\mathbf{X}(s)- \mathbf{X}(t)\big) \ge c\, |s-t|^{\alpha_{11} + \alpha_{22}}
\end{equation}
for all $s, t\in [0, 1]$ with sufficiently small $|s-t|$. Here, detCov$(\xi)$ denotes the determinant
of the covariance matrix of the random vector $\xi$. Under Condition ($\mb A1$), we see that for $i = 1, 2$,
\[
\begin{split}
&\E\big[(X_i(s) - X_i(t))^2\big] = 2 C_{ii}(0) - 2C_{ii}(s-t) \sim 2c_{ii} |s-t|^{\alpha_{ii}},\\
&\E\big[(X_1(s) - X_1(t)) (X_2(s) - X_2(t))\big] = 2C_{12} (0)- 2 C_{12}(s-t)\\
& \qquad \qquad \qquad \qquad \qquad \qquad \qquad \quad \
\sim 2 c_{12}\rho \sigma_1\sigma_2 |s-t|^{\alpha_{12}}
\end{split}
\]
as $|s-t| \to 0$. Consequently,
$$ {\rm detCov}\big(\mathbf{X}(s)- \mathbf{X}(t)\big) \sim 4 \; c_{11}c_{22} |s-t|^{\alpha_{11} + \alpha_{22}}
- 4\; c^2_{12}\rho^2 \sigma^2_1\sigma^2_2 |s-t|^{2\alpha_{12}}.
$$
This implies (\ref{21}) and hence proves  (\ref{Xiao95}).
\end{proof}

\begin{proof}[C. Checking the condition (${\mb A2}$) for the bivariate Mat\'{e}rn process] 
Without loss of
generality, assume that $a=1$ and $M_\nu(h):=M(h|\nu,1)$. Denote by $\kappa_\nu=2^{1-\nu}/{\Gamma(\nu)}$, which satisfies $\kappa_{\nu+1}=(2\nu)^{-1}\kappa_\nu $.  Recall that the derivative 
of the Bessel function of the second kind $K_{\nu}$ satisfies the following 
recurrence formula (see, e.g., \cite{Abramowitz_Stegun_1972}, Section $9.6$)
\begin{align*}
K'_{\nu}(z)=-K_{\nu+1}(z)+\frac{\nu}{z}K_{\nu}(z),
\end{align*}
and
for $\ell\in \Z^+\cup \{0\}$, when $\ell<\nu<\ell+1$, we have the following expansion for $M(\cdot)$
\begin{align*}
M_\nu(h)=\sum_{j=0}^{\ell} b_jh^{2j}-b|h|^{2\nu}+o(|t|^{2l+2}),
\end{align*}
where $b_0,\ldots,b_\ell$ are constants and $b=\Gamma(1-\nu)/(2^{2\nu}\Gamma(1+\nu))$ 
(see, e.g., \cite{Stein_1999Interpo}, p. 32).
Hence,
\begin{align*}
&M'_\nu(h)={\rm sgn}(h)(\kappa_\nu \nu |h|^{\nu-1} K_{\nu}(|h|)+\kappa_\nu |h|^\nu K'_{\nu}(|h|))\nonumber\\
&=2\nu\cdot{\rm sgn}(h)|h|^{-1}(M_\nu(h)-M_{\nu+1}(h))\nonumber\\
&=-2\nu b\cdot {\rm sgn}(h)  |h|^{2\nu-1}+o(|h|^{2\nu-1}),
\end{align*}
where $\text{sgn}(h)$ is the sign function. Similarly,
\begin{align*}
M''_\nu(h)&=(2\nu-1){\rm sgn}(h)|h|^{-1}M'_\nu(h)-2\nu\cdot{\rm sgn}(h)|h|^{-1}M'_{\nu+1}(h)\nonumber\\
&=-2\nu(2\nu-1)b\cdot {\rm sgn}^2(h) |h|^{2\nu-2}+o(|h|^{2\nu-2}),\nonumber\\
M^{(3)}_\nu(h)&=(2\nu-2){\rm sgn}(h)|h|^{-1}M''_\nu(h)-2\nu\cdot{\rm sgn}(h)|h|^{-1}M''_{\nu+1}(h),\nonumber\\
&=-2\nu(2\nu-1)(2\nu-2)b\cdot {\rm sgn}^3(h) |h|^{2\nu-3}+o(|h|^{2\nu-3}),\nonumber\\
\cdots&\nonumber\\
M^{(q)}_\nu(h)&=(2\nu-q+1){\rm sgn}(h)|h|^{-1}M^{(q-1)}_\nu(h)-2\nu\cdot{\rm sgn}(h)|h|^{-1}M^{(q-1)}_{\nu+1}(h)\nonumber\\
&=-\frac{b(2\nu)!}{(2\nu-q)!} {\rm sgn}^q(h)  |h|^{2\nu-q}+o(|h|^{2\nu-q}).
\end{align*}
When $q=4$, the nonsmooth bivariate Mat\'{e}rn field $\mb X$
satisfies the regularity condition (${\mb A2}$).
\end{proof}

\noindent\textit{D. Proof of Theorems \ref{thm: asymptotic variance of Z_1&Z_2} 
$\sim$ \ref{thm: asymptotic normality of bar Z}.}
To prove Theorem \ref{thm: asymptotic variance of Z_1&Z_2} and Theorem
 \ref{thm: asymptotic normality of bar Z}, we make use of the following lemma.
\begin{lemma}
\label{lem: asymptotic of Y on h}
If Conditions (${\mb A1}$) and (${\mb A2}$) hold, then as $|h| \to \infty$,
\begin{align}
\label{sigma_n,ii(h)}
\sigma_{n,ii}^{uv}(h)=O(|h|^{\alpha_{ii}-4}),\  \text{uniformly for }\ n>|h|, \, i=1,2 
\end{align}
and
\begin{align}
\label{sigma_n,12(h)}
\sigma_{n,12}^{uv}(h)&=n^{\frac{\alpha_{11}+\alpha_{22}} 2- \alpha_{12}}
O(|h|^{\alpha_{12}-4})
=O(|h|^{\frac{\alpha_{11}+\alpha_{22}}{2}-4})
\end{align}
uniformly for $ n>|h|$.
\end{lemma}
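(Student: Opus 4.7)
The plan is to exploit the annihilating property of the second-difference filter $(a_{-1}, a_0, a_1) = (1, -2, 1)$, which has vanishing moments of orders $0, 1, 3$ but not of order $2$. Starting from
\[
\sigma_{n,ii}^{uv}(h) = n^{\alpha_{ii}} \sum_{j,k=-1}^{1} a_j a_k\, C_{ii}\Big(\frac{h + kv - ju}{n}\Big),
\]
I would set $x := h/n$ and $\epsilon_{jk} := (kv - ju)/n$ and apply Taylor's theorem to $C_{ii}(x+\epsilon_{jk})$ to order four around $x$, with Lagrange remainder $\tfrac{1}{24} C_{ii}^{(4)}(\xi_{jk})\, \epsilon_{jk}^4$ for some $\xi_{jk}$ between $x$ and $x + \epsilon_{jk}$.

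For each $q \in \{0, 1, 2, 3\}$ the $q$-th order contribution is $\sum_{j,k} a_j a_k (kv - ju)^q$, which the binomial theorem breaks into products $\big(\sum_j a_j\, j^{q-p}\big)\big(\sum_k a_k\, k^p\big)$. Since the only nonvanishing moment of $(a_j)$ of index $\leq 3$ is the second one, and $p = q - p = 2$ is impossible when $q \leq 3$, every such product vanishes. Thus the polynomial terms cancel identically and
\[
\sigma_{n,ii}^{uv}(h) = \frac{n^{\alpha_{ii}}}{24}\, \sum_{j,k=-1}^{1} a_j a_k\, C_{ii}^{(4)}(\xi_{jk})\, \epsilon_{jk}^4,
\]
with $|\epsilon_{jk}| \le 2m/n$ uniformly in $j, k$.

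It then suffices to bound $C_{ii}^{(4)}(\xi_{jk})$ uniformly in the regime $n > |h|$. Condition $({\mathbf A2})$ gives $C_{ii}^{(4)}(t) \asymp |t|^{\alpha_{ii}-4}$ as $t \to 0$, and $C_{ii}^{(4)}$ is continuous away from the origin, hence bounded on compact subsets of $(0, \infty)$; combining these, and using $\alpha_{ii} - 4 < 0$, yields the uniform bound $|C_{ii}^{(4)}(t)| \lesssim |t|^{\alpha_{ii}-4}$ for $t \in (0, 1]$. Since $|h|$ is large and $n > |h|$, each $\xi_{jk} \asymp h/n$, so $|C_{ii}^{(4)}(\xi_{jk})| = O\big((|h|/n)^{\alpha_{ii}-4}\big)$. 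Substituting,
\[
\sigma_{n,ii}^{uv}(h) = O\big(n^{\alpha_{ii}} \cdot n^{-4} \cdot (|h|/n)^{\alpha_{ii}-4}\big) = O(|h|^{\alpha_{ii}-4}),
\]
uniformly in $n > |h|$. The identical argument applied to $C_{12}$ produces $\sigma_{n,12}^{uv}(h) = O\big(n^{(\alpha_{11}+\alpha_{22})/2 - \alpha_{12}} |h|^{\alpha_{12}-4}\big)$, which is the first equality in \eqref{sigma_n,12(h)}; the second equality follows because $(\alpha_{11}+\alpha_{22})/2 - \alpha_{12} \le 0$ under $({\mathbf A1})$ and $n > |h|$, so $n^{(\alpha_{11}+\alpha_{22})/2 - \alpha_{12}} \le |h|^{(\alpha_{11}+\alpha_{22})/2 - \alpha_{12}}$.

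The step I expect to be most delicate is the uniform control of $C_{ii}^{(4)}$ across the full range $0 < |h|/n < 1$: Condition $({\mathbf A2})$ only provides the asymptotic as $t \to 0$, so when $|h|/n$ is bounded away from zero one needs a separate boundedness argument for $C_{ii}^{(4)}$ on compact intervals away from the origin, and then to check that the two regimes glue into a single power-law majorant using the negativity of $\alpha_{ii} - 4$. Everything else is essentially a bookkeeping exercise built on the filter-moment cancellation and the Taylor remainder.
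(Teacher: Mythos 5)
Your proof is correct and follows essentially the same route as the paper: a fourth-order Taylor expansion about $h/n$, cancellation of the order $0$--$3$ terms via the vanishing moments of the filter $(1,-2,1)$, and a bound on the Lagrange remainder through Condition $(\mb A2)$ using $\xi_{jk}\asymp h/n$ (the paper simply cites Kent and Wood for the marginal case \eqref{sigma_n,ii(h)} and writes out this argument only for \eqref{sigma_n,12(h)}). Your extra remark about gluing the near-origin asymptotic of $C_{ij}^{(4)}$ to a compactness bound away from the origin addresses a point the paper leaves implicit, and is handled correctly.
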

We postpone the proof of Lemma \ref{lem: asymptotic of Y on h} 
to the end of this section.
\begin{proof}[Proof of Theorem \ref{thm: asymptotic variance of Z_1&Z_2}]
Let
\begin{align*}
d_{n,ij}^{uv}(h):=\left\{\begin{array}{ll}
\bigg(1-\frac{|h|}{n}\bigg)(\sigma_{n,ij}^{uv}(h))^2,& |h|<n\\
0,&\text{otherwise}.
\end{array}
\right.
\end{align*}
By \eqref{sigma_n,ii^uv(h)} and \eqref{sigma_n,12^uv(h)}, 
for any fixed $h$, we have $d_{n,ij}^{uv}(h)\rightarrow 
\sigma_{0,ij}^{uv}(h)$ as $n\rightarrow \infty$.
By Lemma \ref{lem: asymptotic of Y on h}, we know
\begin{align*}
d_{n,ij}^{uv}(h)\leq C_0 |h|^{\alpha_{ii}+\alpha_{jj}-8},
\end{align*}
with the power $\alpha_{ii}+\alpha_{jj}-8<-4$. Therefore, 
$\sum_{h=-\infty}^\infty d_{n,ij}^{uv}(h)$ is bounded
by a summable series, and \eqref{nPhi_n convergence} can be 
concluded by the dominated convergence theorem.
\end{proof}
\begin{proof}[Proof of Theorem \ref{thm: asymptotic normality of bar Z}] 
The argument in the following generalizes Kent and Wood \cite{Kent_Wood_1995}'s method 
to the bivariate case. According to the Cram\'{e}r-Wold theorem, it is equivalent 
to prove that for $\forall \bb\gamma =(\gamma_{1,1},\ldots,\gamma_{m,1},\gamma_{1,2},\ldots,\gamma_{m,2})^\top\in 
\mathbb{R}^{2m}$,
\begin{align*}
n^{1/2}\bb\gamma^\top(\mb{\bar Z}_n-{\rm E}[\mb{\bar Z}_n])\xrightarrow{d} \mathcal{N}(0,\bb\gamma^\top\mb\Phi_0\bb\gamma), \ \text{as}\ n\rightarrow \infty.
\end{align*}
Let $\bb\gamma_i:=(\gamma_{1,i},\ldots,\gamma_{m,i})^\top, i=1,2$ and
\begin{align*}
\mb\Gamma_n={\rm diag}(\underbrace{\bb\gamma_1^\top, 
\ldots,
\bb\gamma_1^\top}_{n\ \text{times}},\underbrace{\bb\gamma_2^\top,
\ldots, \bb\gamma_2^\top}_{n\ \text{times}})^\top.
\end{align*}
Therefore, $\bb\Gamma_n$ is a $(2mn)\times (2mn)$ matrix including $n$ 
copies of $\bb\gamma_1$ and $\bb\gamma_2$ on the diagonal.  
Let
\begin{align*}
\mb Y_{n,i}(j):=(Y_{n,i}^1(j),\ldots,Y_{n,i}^m(j))^\top, 
i=1,2,\, j=1,\ldots,n,
\end{align*}
and
\begin{align*}
\mb W_n=(\mb Y_{n,1}^\top(1),\ldots,
\mb Y_{n,1}^\top(n),\mb Y_{n,2}^\top(1),\ldots,
\mb Y_{n,2}^\top(n))^\top.
\end{align*}
Therefore, $\mb W_n$ is a $(2mn)$-dimensional vector. Then, we have
\begin{align*}
S_n\triangleq n^{1/2}\bb\gamma^\top(\mb{\bar Z}_n
-{\rm E}\mb{\bar Z}_n)=n^{-1/2}(\mb W_n^\top\bb\Gamma_n
\mb W_n-{\rm E}(\mb W_n^\top\bb\Gamma_n\mb W_n)).
\end{align*}
Denote by $\mb V_{n}={\rm E}(\mb W_n\mb W_n^\top)$ the covariance 
matrix of $\mb W_n$ and by $\mb V_n^{1/2}$, the Cholesky factor 
of $\mb V_n$, i.e., the lower triangular matrix satisfying
$\mb V_n=\mb V_n^{1/2}(\mb V_n^{1/2})^\top$. Denote 
by $\bb \Lambda_n ={\rm diag}(\lambda_{n,j})_{j=1}^{2mn}$ the 
diagonal matrix whose diagonal entries are eigenvalues
of $2n^{-1/2}(\mb V_n^{1/2})^\top\bb \Gamma_n
\mb V_n^{1/2}$. Then, for a $(2mn)$-dimensional vector 
$\bb\epsilon_n = (\epsilon_{1,n}, \ldots, \epsilon_{2mn,n})^\top$ of 
i.i.d. standard normal random variables, we obtain
\begin{align*}
n^{-\frac{1}{2}}\mb W_n^\top\bb\Gamma_n\mb W_n\stackrel{d}{=}\bb \epsilon_n^\top\big(n^{-\frac{1}{2}}(\mb V_n^{\frac{1}{2}})^\top
\bb \Gamma_n\mb V_n^{\frac{1}{2}}\big) \bb \epsilon_n
\stackrel{d}{=}\frac{1}{2}\bb \epsilon_n^\top\bb \Lambda_n \bb \epsilon_n.
\end{align*}
Therefore, for $\forall \theta<{\rm {min}}_{1\leq j\leq 2mn}\lambda_{n,j}^{-1}$, 
the cumulant generating function $S_n$ is given by
\begin{align*}
k_n(\theta)\triangleq \ln{\rm E}e^{\theta S_n}
=-\frac{1}{2}\sum_{j=1}^{2mn}(\ln(1-\theta \lambda_{n,j})
+\theta \lambda_{n,j})
\end{align*}
To obtain the limit of $k_n(\theta)$ as $n\rightarrow \infty$, 
we first prove
\begin{align}
\label{summation of fourth power of eigenvalues converges}
{\rm trace}(\bb\Lambda_n^4)=\sum_{j=1}^{2mn}\lambda_{n,j}^4\rightarrow 0, 
\ \text{as}\ n\rightarrow \infty.
\end{align}
For $ 1\leq i_1,i_2\leq 2, 1\leq j_1,j_2\leq n, 1\leq k_1,k_2\leq m
$, let
\begin{align}
\ell_1=(i_1-1)mn+(j_1-1)m+k_1,\nonumber\\
\ell_2=(i_2-1)mn+(j_2-1)m+k_2.\nonumber
\end{align}
The $(\ell_1,\ell_2)$ entry of $\mb W_n$ is
\begin{align*}
\mb V_n(\ell_1,\ell_2)={\rm E}[Y_{n,i_1}^{k_1}(j_1)Y_{n,i_2}^{k_2}(j_2)]
=\sigma_{n,i_1i_2}^{k_1k_2}(j_2-j_1).
\end{align*}
Therefore,
\begin{align}
\label{expression_trace Lambda^4}
&{\rm trace}(\bb \Lambda_n^4)=\frac{16}{n^2}{\rm trace}((\mb V_n\bb \Gamma_n)^4)\nonumber\\
&=\frac{16}{n^2}\sum_{\ell_1,\ldots,\ell_4=1}^{2mn}(\mb V_n\bb\Gamma_n)(\ell_1,\ell_2)
(\mb V_n\bb\Gamma_n)(\ell_2,\ell_3)
(\mb V_n\bb\Gamma_n)(\ell_3,\ell_4)(\mb V_n\bb\Gamma_n)(\ell_4,\ell_1)\nonumber\\
&=\frac{16}{n^2}\sum_{i_1,\ldots,i_4=1}^2\sum_{k_1,\ldots,k_4=1}^m
\gamma_{k_1,i_1}\gamma_{k_2,i_2}\gamma_{k_3,i_3}\gamma_{k_4,i_4}
\Delta_n(k_1,\ldots,k_4,i_1,\ldots,i_4),
\end{align}
where
\begin{align*}
&\Delta_n(k_1,\ldots,k_4,i_1,\ldots,i_4)\\
&:=\sum_{j_1,\ldots,j_4=1}^n\sigma_{n,i_1i_2}^{k_1k_2}(j_2-j_1)
\sigma_{n,i_2i_3}^{k_2k_3}(j_3-j_2)\sigma_{n,i_3i_4}^{k_3k_4}(j_4-j_3)
\sigma_{n,i_4i_1}^{k_4k_1}(j_4-j_1).
\end{align*}
Letting $h_i=j_{i+1}-j_{i},i=1,2,3$, we have
\begin{align*}
&\Delta_n(k_1,\ldots,k_4,i_1,\ldots,i_4)\nonumber\\
&=\sum_{j_1,\ldots,j_4=1}^n\sigma_{n,i_1i_2}^{k_1k_2}(h_1)
\sigma_{n,i_2i_3}^{k_2k_3}(h_2)\sigma_{n,i_3i_4}^{k_3k_4}(h_3)
\sigma_{n,i_4i_1}^{k_4k_1}(h_1+h_2+h_3).
\end{align*}
Given fixed $h_1,h_2$ and $h_3$, the cardinality of the set
\begin{align*}
\#\{(j_1,\ldots,j_4)\ |\ 1\leq j_1,\ldots,j_4\leq n\}\leq n.
\end{align*}
Hence,
\begin{align*}
&|\Delta_n(k_1,\ldots,k_4,i_1,\ldots,i_4)|\nonumber\\
\leq & n\sum_{|h_1|,|h_2|,|h_3|\leq n-1} |\sigma_{n,i_1i_2}^{k_1k_2}(h_1)\sigma_{n,i_2i_3}^{k_2k_3}(h_2)
\sigma_{n,i_3i_4}^{k_3k_4}(h_3)\sigma_{n,i_4i_1}^{k_4k_1}(h_1+h_2+h_3)|
\end{align*}
Further, by Lemma \ref{lem: asymptotic of Y on h}, we have
\begin{align}
\label{convergence rate_Delta_n}
&|\Delta_n(k_1,\ldots,k_4,i_1,\ldots,i_4)|\nonumber\\
\leq & C_0 n\prod_{r=1}^3\sum_{h_r=-n+1}^{n-1}h_r^{\frac{\alpha_{i_ri_r}}{2}
+\frac{\alpha_{i_{r+1}i_{r+1}}}{2}-4}\nonumber\\
\leq & C_0 n \prod_{r=1}^3 \sum_{h_r=-\infty}^{\infty}
h_r^{\frac{\alpha_{i_ri_r}}{2}
+\frac{\alpha_{i_{r+1}i_{r+1}}}{2}-4}\nonumber\\
=&O(n).
\end{align}
The last equality holds since $\alpha_{i_ri_r}/2
+\alpha_{i_{r+1}i_{r+1}}/2-4<-2$. By 
\eqref{expression_trace Lambda^4} and \eqref{convergence rate_Delta_n}, 
we have
\begin{align*}
{\rm trace}(\bb\Lambda_n^4)=O(n^{-1})\rightarrow 0, \ \text{as}\ n\rightarrow \infty.
\end{align*}
Now, we are ready to prove the asymptotic normality of $S_n$. 
By applying Taylor's expansion to $\ln(1-\theta\lambda_{n,j})$ at $\theta=0$, 
we obtain
\begin{align*}
k_n(\theta)&=\frac{\theta^2}{4}\sum_{j=1}^{2mn}\lambda_{n,j}^2
+\frac{\theta^3}{6}\sum_{j=1}^{2mn}\lambda_{n,j}^3
+\frac{\theta^4}{8}\sum_{j=1}^{2mn}(1-\theta_{n,j}
\lambda_{n,j})^{-4}\lambda_{n,j}^4,
\end{align*}
where $\theta_{n,j}$ is between $0$ and $\theta$.

Let us first consider the term $\sum_{j=1}^{2mn}\lambda_{n,j}^2/2$. 
Since
\begin{align*}
&\frac{1}{2}\sum_{j=1}^{2mn}\lambda_{n,j}^2=\frac{1}{2}
{\rm trace}(\bb\Lambda_n^2)=\frac{2}{n} 
{\rm trace}((\mb V_n\bb\Gamma_n)^2)\nonumber\\
&=\frac{2}{n} \sum_{i_1,i_2=1}^2\sum_{k_1,k_2=1}^m\sum_{j_1,j_2=1}^n
\gamma_{k_1,i_1}
\gamma_{k_2,i_2}\big(\sigma_{n,i_1i_2}^{k_1k_2}(j_2-j_1)\big)^2,
\end{align*}
and
\begin{align*}
&\gamma^\top\bb\Phi_n\gamma=\sum_{i_1,i_2=1}^2\sum_{k_1,k_2=1}^m\gamma_{k_1,i_1}
\gamma_{k_2,i_2}\phi_{n,i_1i_2}^{k_1k_2}\nonumber\\
=&\frac{2}{n^2}\sum_{i_1,i_2=1}^2\sum_{k_1,k_2=1}^m\sum_{j_1,j_2=1}^n
\gamma_{k_1,i_1}\gamma_{k_2,i_2}\big(\sigma_{n,i_1i_2}^{k_1k_2}(j_2-j_1)\big)^2,
\end{align*}
it follows from Theorem \ref{thm: asymptotic variance of Z_1&Z_2} that
\begin{align}
\label{convergence_sum_lambda^2}
\frac{1}{2}\sum_{j=1}^{2mn}\lambda_{n,j}^2=\gamma^\top(n\bb\Phi_n)\gamma
\rightarrow \gamma^\top \bb\Phi_0\gamma,\ \text{as}\ n\rightarrow \infty.
\end{align}
Secondly, by \eqref{summation of fourth power of eigenvalues converges}, we have
\begin{align}
\label{convergence_max lambda}
{\rm max}_{1\leq j\leq 2mn} |\lambda_{n,j}|\leq \bigg(\sum_{j=1}^{2mn}\lambda_{n,j}^4\bigg)^{\frac{1}{4}}\rightarrow 0,\ \text{as}\ n\rightarrow \infty,
\end{align}
which implies
\begin{align}
\label{convergence_sum_lambda^3}
\bigg|\sum_{j=1}^{2mn} \lambda_{n,j}^3\bigg|\leq {\rm max}_{1\leq j\leq 
2mn}|\lambda_{n,j}|\sum_{j=1}^{2mn}\lambda_{n,j}^2\rightarrow 0,\
\text{as}\ n\rightarrow \infty.
\end{align}
Thirdly, note that $\delta:=\rm sup_{n\geq 1}{\rm max}_{1\leq j\leq 2mn}$ 
$|\lambda_{n,j}|$ is positive and finite by \eqref{convergence_max lambda}. 
If we restrict attention to $|\theta|\leq (2\delta)^{-1}$, 
we have $(1-\theta_{n,j}\lambda_{n,j})^{-4}\leq 16$; hence, for 
$\theta\in (-(2\delta)^{-1},(2\delta)^{-1})$,
\begin{align}
\label{convergence_sum_1-theta_lambda lambda^4}
\sum_{j=1}^{2mn}(1-\theta_{n,j}\lambda_{n,j})^{-4}\lambda_{n,j}^4
\rightarrow 0,\ \text{as}\ n\rightarrow \infty.
\end{align}
Therefore, by \eqref{convergence_sum_lambda^2},\eqref{convergence_sum_lambda^3} 
and \eqref{convergence_sum_1-theta_lambda lambda^4}, for $\forall 
\theta\in (-(2\delta)^{-1},(2\delta)^{-1})$, we have
\begin{align*}
k_n(\theta)\rightarrow \frac{\theta^2}{2}\gamma^\top\bb\Phi_0\gamma,
\end{align*}
which leads to
\begin{align*}
S_n:=n^{1/2}\gamma^\top(\mb{\bar Z}_n-{\rm E}\mb{\bar Z}_n)
\xrightarrow{d} \mathcal{N}(0,\gamma^\top\bb\Phi_0\gamma), 
\ \text{as}\ n\rightarrow \infty.
\end{align*}
This proves Theorem \ref{thm: asymptotic normality of bar Z}.
\end{proof}

Finally, we prove Lemma \ref{lem: asymptotic of Y on h}.
\begin{proof}[Proof of Lemma \ref{lem: asymptotic of Y on h}]
\eqref{sigma_n,ii(h)} comes directly from the proof of Theorem $1$ 
in Kent and Wood \cite{Kent_Wood_1997}. We only need to prove \eqref{sigma_n,12(h)}.
To this end, we expand $C_{12}\big((h+kv-ju)/n\big)$ in a 
Taylor series about $h/n$ to the fourth order to obtain
\begin{align}\label{Eq:L}
\sigma_{n,12}^{uv}(h)&=n^{\frac{\alpha_{11}+\alpha_{22}}{2}}
\sum_{j,k=-1}^1a_ja_kC_{12}\bigg(\frac{h+kv-ju}{n}\bigg)\nonumber\\
&=n^{\frac{\alpha_{11}+\alpha_{22}}{2}}\sum_{r=0}^{3}
\sum_{j,k=-1}^1a_ja_k\frac{(kv-ju)^r}{r!n^r} C_{12}^{(r)}
\bigg(\frac{h}{n}\bigg)\nonumber\\
&\quad+n^{\frac{\alpha_{11}+\alpha_{22}}{2}}\sum_{j,k=-1}^1a_ja_k
\frac{(kv-ju)^{4}}{4!n^{4}} C_{12}^{(4)}\bigg(\frac{h_{kj}^*}{n}\bigg)\nonumber\\
&=n^{\frac{\alpha_{11}+\alpha_{22}}{2}}\sum_{j,k=-1}^1a_ja_k
\frac{(kv-ju)^{4}}{4!n^{4}} C_{12}^{(4)}\bigg(\frac{h_{kj}^*}{n}\bigg),
\end{align}
where $h_{kj}^*$ lies between $h$ and $h+kv-ju$. Since $|kv-ju|
\leq u+v\leq 2m$, $h_{kj}^*\leq 2|h|$ for all $|h|\geq 2m$. 
By applying Condition (${\mb A2}$) to the last terms in (\ref{Eq:L}),
we derive that 
\begin{align*}
|\sigma_{n,12}^{uv}(h)|\leq C_0 |h|^{\alpha_{12}-4}\cdot  n^{\frac{\alpha_{11}+\alpha_{22}}{2}-\alpha_{12}}
\leq C_0 |h|^{\frac{\alpha_{11}+\alpha_{22}}{2}-4}
\end{align*}
for all $|h|\geq 2m$ and all $n>|h|$. This concludes the proof.
\end{proof}

\section{Acknowledgement}
We thank the anonymous reviewers and the associate editor for their thoughtful comments
and helpful suggestions, which have led to several improvements of our manuscript.
\bigskip

\bibliographystyle{plain}
\bibliography{Zhou_Reference}

\begin{thebibliography}{42}

\bibitem[\protect\citeauthoryear{Abramowitz and
  Stegun}{1972}]{Abramowitz_Stegun_1972}
\begin{bbook}[author]
\bauthor{\bsnm{Abramowitz},~\bfnm{Milton}\binits{M.}} \AND
  \bauthor{\bsnm{Stegun},~\bfnm{Irene~A}\binits{I.~A.}}
(\byear{1972}).
\btitle{Handbook of mathematical functions with formulas, graphs, and
  mathematical tables}.
\bpublisher{Dover, New York}.
\end{bbook}
\endbibitem

\bibitem[\protect\citeauthoryear{Adler}{1981}]{Adler_1981}
\begin{bbook}[author]
\bauthor{\bsnm{Adler},~\bfnm{Robert~J}\binits{R.~J.}}
(\byear{1981}).
\btitle{The geometry of random fields}.
\bpublisher{Wiley, London}.
\end{bbook}
\endbibitem

\bibitem[\protect\citeauthoryear{Amblard and
  Coeurjolly}{2011}]{Amblard_Coeurjolly_2011}
\begin{barticle}[author]
\bauthor{\bsnm{Amblard},~\bfnm{Pierre-Olivier}\binits{P.-O.}} \AND
  \bauthor{\bsnm{Coeurjolly},~\bfnm{Jean-Fran{\c{c}}ois}\binits{J.-F.}}
(\byear{2011}).
\btitle{Identification of the multivariate fractional {B}rownian motion}.
\bjournal{IEEE Trans. Signal Process.}
\bvolume{59}
\bpages{5152--5168}.
\bdoi{10.1109/TSP.2011.2162835}
\bmrnumber{2883588}
\end{barticle}
\endbibitem

\bibitem[\protect\citeauthoryear{Apanasovich, Genton and
  Sun}{2012}]{Apanasovich_Genton_Sun_2012}
\begin{barticle}[author]
\bauthor{\bsnm{Apanasovich},~\bfnm{Tatiyana}\binits{T.}},
  \bauthor{\bsnm{Genton},~\bfnm{Marc~G.}\binits{M.~G.}} \AND
  \bauthor{\bsnm{Sun},~\bfnm{Ying}\binits{Y.}}
(\byear{2012}).
\btitle{{Cross-covariance functions for multivariate random fields based on
  latent dimensions}}.
\bjournal{J. Amer. Statist. Assoc.}
\bvolume{107}
\bpages{180--193}.
\bmrnumber{MR2949350}
\end{barticle}
\endbibitem

\bibitem[\protect\citeauthoryear{Bardet and
  Surgailis}{2011}]{Bardet_Surgailis_2011}
\begin{barticle}[author]
\bauthor{\bsnm{Bardet},~\bfnm{Jean-Marc}\binits{J.-M.}} \AND
  \bauthor{\bsnm{Surgailis},~\bfnm{Donatas}\binits{D.}}
(\byear{2011}).
\btitle{Measuring the roughness of random paths by increment ratios}.
\bjournal{Bernoulli}
\bvolume{17}
\bpages{749--780}.
\bdoi{10.3150/10-BEJ291}
\bmrnumber{2787614}
\end{barticle}
\endbibitem

\bibitem[\protect\citeauthoryear{Chan and Wood}{2000}]{Chan_Wood_2000}
\begin{barticle}[author]
\bauthor{\bsnm{Chan},~\bfnm{Grace}\binits{G.}} \AND
  \bauthor{\bsnm{Wood},~\bfnm{Andrew T.~A.}\binits{A.~T.~A.}}
(\byear{2000}).
\btitle{{Increment-based estimators of fractal dimension for two-dimensional
  surface data}}.
\bjournal{Statist. Sinica}
\bvolume{10}
\bpages{343--376}.
\bmrnumber{MR1769748 (2001c:62110)}
\end{barticle}
\endbibitem

\bibitem[\protect\citeauthoryear{Chan and Wood}{2004}]{Chan_Wood_2004}
\begin{barticle}[author]
\bauthor{\bsnm{Chan},~\bfnm{Grace}\binits{G.}} \AND
  \bauthor{\bsnm{Wood},~\bfnm{Andrew T.~A.}\binits{A.~T.~A.}}
(\byear{2004}).
\btitle{{Estimation of fractal dimension for a class of non-Gaussian stationary
  processes and fields}}.
\bjournal{Ann. Statist.}
\bvolume{32}
\bpages{1222--1260}.
\bmrnumber{MR2065204 (2005i:60062)}
\end{barticle}
\endbibitem

\bibitem[\protect\citeauthoryear{Chen, Simpson and
  Ying}{2000}]{Chen_Simpson_Ying_2000}
\begin{barticle}[author]
\bauthor{\bsnm{Chen},~\bfnm{Huann-Sheng}\binits{H.-S.}},
  \bauthor{\bsnm{Simpson},~\bfnm{Douglas~G.}\binits{D.~G.}} \AND
  \bauthor{\bsnm{Ying},~\bfnm{Zhiliang}\binits{Z.}}
(\byear{2000}).
\btitle{Infill asymptotics for a stochastic process model with measurement
  error}.
\bjournal{Statist. Sinica}
\bvolume{10}
\bpages{141--156}.
\bmrnumber{1742105}
\end{barticle}
\endbibitem

\bibitem[\protect\citeauthoryear{Chil\`es and Delfiner}{1999}]{Chiles_1999}
\begin{bbook}[author]
\bauthor{\bsnm{Chil\`es},~\bfnm{Jean-Paul}\binits{J.-P.}} \AND
  \bauthor{\bsnm{Delfiner},~\bfnm{Pierre}\binits{P.}}
(\byear{1999}).
\btitle{Geostatistics: Modeling spatial uncertainty}.
\bpublisher{Wiley, New York}.
\end{bbook}
\endbibitem

\bibitem[\protect\citeauthoryear{Coeurjolly}{2008}]{Coeurjolly_2008}
\begin{barticle}[author]
\bauthor{\bsnm{Coeurjolly},~\bfnm{Jean-Fran{\c{c}}ois}\binits{J.-F.}}
(\byear{2008}).
\btitle{Hurst exponent estimation of locally self-similar {G}aussian processes
  using sample quantiles}.
\bjournal{Ann. Statist.}
\bvolume{36}
\bpages{1404--1434}.
\bdoi{10.1214/009053607000000587}
\bmrnumber{2418662}
\end{barticle}
\endbibitem

\bibitem[\protect\citeauthoryear{Constantine and
  Hall}{1994}]{Constantine_Hall_1994}
\begin{barticle}[author]
\bauthor{\bsnm{Constantine},~\bfnm{A.~G.}\binits{A.~G.}} \AND
  \bauthor{\bsnm{Hall},~\bfnm{Peter}\binits{P.}}
(\byear{1994}).
\btitle{{Characterizing surface smoothness via estimation of effective fractal
  dimension}}.
\bjournal{J. Roy. Statist. Soc. Ser. B}
\bvolume{56}
\bpages{97--113}.
\bmrnumber{MR1257799 (95k:62260)}
\end{barticle}
\endbibitem

\bibitem[\protect\citeauthoryear{Cressie}{1993}]{Cressie_1993}
\begin{bbook}[author]
\bauthor{\bsnm{Cressie},~\bfnm{Noel A.~C.}\binits{N.~A.~C.}}
(\byear{1993}).
\btitle{{Statistics for spatial data}}.
\bpublisher{New York: Jone Wiley \& Sons}.
\end{bbook}
\endbibitem

\bibitem[\protect\citeauthoryear{Daley, Porcu and
  Bevilacqua}{2015}]{Daley_Porcu_Bevilacqua_2015}
\begin{barticle}[author]
\bauthor{\bsnm{Daley},~\bfnm{Daryl~J}\binits{D.~J.}},
  \bauthor{\bsnm{Porcu},~\bfnm{Emilio}\binits{E.}} \AND
  \bauthor{\bsnm{Bevilacqua},~\bfnm{Moreno}\binits{M.}}
(\byear{2015}).
\btitle{Classes of compactly supported covariance functions for multivariate
  random fields}.
\bjournal{Stoch. Environ. Res. Risk. Assess.}
\bvolume{29}
\bpages{1249--1263}.
\end{barticle}
\endbibitem

\bibitem[\protect\citeauthoryear{Du and Ma}{2013}]{Du_Ma_2013}
\begin{barticle}[author]
\bauthor{\bsnm{Du},~\bfnm{Juan}\binits{J.}} \AND
  \bauthor{\bsnm{Ma},~\bfnm{Chunsheng}\binits{C.}}
(\byear{2013}).
\btitle{Vector random fields with compactly supported covariance matrix
  functions}.
\bjournal{J. Statist. Plann. Inference}
\bvolume{143}
\bpages{457--467}.
\end{barticle}
\endbibitem

\bibitem[\protect\citeauthoryear{Du, Zhang and
  Mandrekar}{2009}]{Du_Zhang_Mandrekar_2009}
\begin{barticle}[author]
\bauthor{\bsnm{Du},~\bfnm{Juan}\binits{J.}},
  \bauthor{\bsnm{Zhang},~\bfnm{Hao}\binits{H.}} \AND
  \bauthor{\bsnm{Mandrekar},~\bfnm{V.~S.}\binits{V.~S.}}
(\byear{2009}).
\btitle{Fixed-domain asymptotic properties of tapered maximum likelihood
  estimators}.
\bjournal{Ann. Statist.}
\bvolume{37}
\bpages{3330--3361}.
\bdoi{10.1214/08-AOS676}
\bmrnumber{2549562}
\end{barticle}
\endbibitem

\bibitem[\protect\citeauthoryear{Furrer, Bachoc and
  Du}{2016}]{Furrer_Bachoc_Du_2016}
\begin{barticle}[author]
\bauthor{\bsnm{Furrer},~\bfnm{Reinhard}\binits{R.}},
  \bauthor{\bsnm{Bachoc},~\bfnm{Fran\c{c}ois}\binits{F.}} \AND
  \bauthor{\bsnm{Du},~\bfnm{Juan}\binits{J.}}
(\byear{2016}).
\btitle{Asymptotic properties of multivariate tapering for estimation and
  prediction}.
\bjournal{J. Multivariate Anal.}
\bvolume{149}
\bpages{177--191}.
\bdoi{10.1016/j.jmva.2016.04.006}
\bmrnumber{3507322}
\end{barticle}
\endbibitem

\bibitem[\protect\citeauthoryear{Gelfand
  et~al.}{2010}]{Gelfand_Diggle_Fuentes_Guttorp_2010}
\begin{bbook}[author]
\bauthor{\bsnm{Gelfand},~\bfnm{A.}\binits{A.}},
  \bauthor{\bsnm{Diggle},~\bfnm{P.}\binits{P.}},
  \bauthor{\bsnm{Fuentes},~\bfnm{M.}\binits{M.}} \AND
  \bauthor{\bsnm{Guttorp},~\bfnm{P.}\binits{P.}}
(\byear{2010}).
\btitle{{Handbook of spatial statistics}}.
\bpublisher{CRC press}.
\end{bbook}
\endbibitem

\bibitem[\protect\citeauthoryear{Genton and Kleiber}{2015}]{Genton_2014_cross}
\begin{barticle}[author]
\bauthor{\bsnm{Genton},~\bfnm{Marc~G}\binits{M.~G.}} \AND
  \bauthor{\bsnm{Kleiber},~\bfnm{William}\binits{W.}}
(\byear{2015}).
\btitle{Cross-covariance functions for multivariate Geostatistics}.
\bjournal{Statist. Sci.}
\bvolume{30}
\bpages{147–163}.
\bmrnumber{MR3353096}
\end{barticle}
\endbibitem

\bibitem[\protect\citeauthoryear{Gneiting, Kleiber and
  Schlather}{2010}]{Gneiting_Kleiber_Schlather2010}
\begin{barticle}[author]
\bauthor{\bsnm{Gneiting},~\bfnm{T.}\binits{T.}},
  \bauthor{\bsnm{Kleiber},~\bfnm{W.}\binits{W.}} \AND
  \bauthor{\bsnm{Schlather},~\bfnm{M.}\binits{M.}}
(\byear{2010}).
\btitle{{Mat\'{e}rn cross-covariance functions for multivariate random
  fields}}.
\bjournal{J. Amer. Statist. Assoc.}
\bvolume{105}
\bpages{1167--1177}.
\bmrnumber{MR2752612 (2012d:62326)}
\end{barticle}
\endbibitem

\bibitem[\protect\citeauthoryear{Gneiting, {\v{S}}ev{\v{c}}{\'{\i}}kov{\'a} and
  Percival}{2012}]{Gneiting_Sevcikova_Percival2012}
\begin{barticle}[author]
\bauthor{\bsnm{Gneiting},~\bfnm{Tilmann}\binits{T.}},
  \bauthor{\bsnm{{\v{S}}ev{\v{c}}{\'{\i}}kov{\'a}},~\bfnm{Hana}\binits{H.}}
  \AND \bauthor{\bsnm{Percival},~\bfnm{Donald~B.}\binits{D.~B.}}
(\byear{2012}).
\btitle{Estimators of fractal dimension: assessing the roughness of time series
  and spatial data}.
\bjournal{Statist. Sci.}
\bvolume{27}
\bpages{247--277}.
\bdoi{10.1214/11-STS370}
\bmrnumber{2963995}
\end{barticle}
\endbibitem

\bibitem[\protect\citeauthoryear{Hall and Wood}{1993}]{Hall_Wood_1993}
\begin{barticle}[author]
\bauthor{\bsnm{Hall},~\bfnm{Peter}\binits{P.}} \AND
  \bauthor{\bsnm{Wood},~\bfnm{Andrew}\binits{A.}}
(\byear{1993}).
\btitle{{On the performance of box-counting estimators of fractal dimension}}.
\bjournal{Biometrika}
\bvolume{80}
\bpages{246--252}.
\bmrnumber{MR1225230 (94m:62108)}
\end{barticle}
\endbibitem

\bibitem[\protect\citeauthoryear{Hanson and Wright}{1971}]{Hanson_1971}
\begin{barticle}[author]
\bauthor{\bsnm{Hanson},~\bfnm{D.~L.}\binits{D.~L.}} \AND
  \bauthor{\bsnm{Wright},~\bfnm{F.~T.}\binits{F.~T.}}
(\byear{1971}).
\btitle{A bound on tail probabilities for quadratic forms in independent random
  variables}.
\bjournal{Ann. Math. Statist.}
\bvolume{42}
\bpages{1079--1083}.
\bdoi{10.1214/aoms/1177693335}
\bmrnumber{0279864}
\end{barticle}
\endbibitem

\bibitem[\protect\citeauthoryear{Kaufman, Schervish and
  Nychka}{2008}]{Kaufman_Schervish_Nychka_2008}
\begin{barticle}[author]
\bauthor{\bsnm{Kaufman},~\bfnm{Cari~G.}\binits{C.~G.}},
  \bauthor{\bsnm{Schervish},~\bfnm{Mark~J.}\binits{M.~J.}} \AND
  \bauthor{\bsnm{Nychka},~\bfnm{Douglas~W.}\binits{D.~W.}}
(\byear{2008}).
\btitle{Covariance tapering for likelihood-based estimation in large spatial
  data sets}.
\bjournal{J. Amer. Statist. Assoc.}
\bvolume{103}
\bpages{1545--1555}.
\bdoi{10.1198/016214508000000959}
\bmrnumber{2504203 (2010h:62258)}
\end{barticle}
\endbibitem

\bibitem[\protect\citeauthoryear{Kent and Wood}{1995}]{Kent_Wood_1995}
\begin{barticle}[author]
\bauthor{\bsnm{Kent},~\bfnm{John~T.}\binits{J.~T.}} \AND
  \bauthor{\bsnm{Wood},~\bfnm{Andrew T.~A.}\binits{A.~T.~A.}}
(\byear{1995}).
\btitle{Estimating the fractal dimension of a locally self-similar {G}aussian
  process using increments}.
\bjournal{Statistics Research Report SRR 034-95.}
\bpages{Centre for Mathematics and Its Applications, Australian National
  University, Canberra}.
\end{barticle}
\endbibitem

\bibitem[\protect\citeauthoryear{Kent and Wood}{1997}]{Kent_Wood_1997}
\begin{barticle}[author]
\bauthor{\bsnm{Kent},~\bfnm{John~T.}\binits{J.~T.}} \AND
  \bauthor{\bsnm{Wood},~\bfnm{T.~A.}\binits{T.~A.}}
(\byear{1997}).
\btitle{{Estimating the fractal dimension of a locally self-similar Gaussian
  process by using increments}}.
\bjournal{J. Roy. Statist. Soc. Ser. B}
\bvolume{59}
\bpages{679-699}.
\bmrnumber{MR1452033 (99a:62136)}
\end{barticle}
\endbibitem

\bibitem[\protect\citeauthoryear{Kleiber and
  Nychka}{2012}]{Kleiber_Nychka_2012}
\begin{barticle}[author]
\bauthor{\bsnm{Kleiber},~\bfnm{William}\binits{W.}} \AND
  \bauthor{\bsnm{Nychka},~\bfnm{Douglas}\binits{D.}}
(\byear{2012}).
\btitle{Nonstationary modeling for multivariate spatial processes}.
\bjournal{J. Multivariate Anal.}
\bvolume{112}
\bpages{76--91}.
\bdoi{10.1016/j.jmva.2012.05.011}
\bmrnumber{2957287}
\end{barticle}
\endbibitem

\bibitem[\protect\citeauthoryear{Lehmann and Casella}{2006}]{Lehmann_2006}
\begin{bbook}[author]
\bauthor{\bsnm{Lehmann},~\bfnm{Erich~Leo}\binits{E.~L.}} \AND
  \bauthor{\bsnm{Casella},~\bfnm{George}\binits{G.}}
(\byear{2006}).
\btitle{Theory of point estimation}.
\bpublisher{Springer Science \& Business Media}.
\end{bbook}
\endbibitem

\bibitem[\protect\citeauthoryear{Lim and Stein}{2008}]{lim2008properties}
\begin{barticle}[author]
\bauthor{\bsnm{Lim},~\bfnm{Chae~Young}\binits{C.~Y.}} \AND
  \bauthor{\bsnm{Stein},~\bfnm{Michael}\binits{M.}}
(\byear{2008}).
\btitle{Properties of spatial cross-periodograms using fixed-domain
  asymptotics}.
\bjournal{J. Multivariate Anal.}
\bvolume{99}
\bpages{1962--1984}.
\end{barticle}
\endbibitem

\bibitem[\protect\citeauthoryear{Loh}{2015}]{Loh_2015}
\begin{barticle}[author]
\bauthor{\bsnm{Loh},~\bfnm{Wei-Liem}\binits{W.-L.}}
(\byear{2015}).
\btitle{Estimating the smoothness of a {G}aussian random field from irregularly
  spaced data via higher-order quadratic variations}.
\bjournal{Ann. Statist.}
\bvolume{43}
\bpages{2766--2794}.
\bdoi{10.1214/15-AOS1365}
\bmrnumber{3405611}
\end{barticle}
\endbibitem

\bibitem[\protect\citeauthoryear{Moreva and
  Schlather}{2016}]{Moreva_Martin_2016}
\begin{barticle}[author]
\bauthor{\bsnm{Moreva},~\bfnm{Olga}\binits{O.}} \AND
  \bauthor{\bsnm{Schlather},~\bfnm{Martin}\binits{M.}}
(\byear{2016}).
\btitle{Modeling and simulation of bivariate Gaussian random fields}.
\bjournal{arXiv preprint arXiv:1609.06561}.
\end{barticle}
\endbibitem

\bibitem[\protect\citeauthoryear{Pascual and
  Zhang}{2006}]{pascual2006estimation}
\begin{barticle}[author]
\bauthor{\bsnm{Pascual},~\bfnm{F.}\binits{F.}} \AND
  \bauthor{\bsnm{Zhang},~\bfnm{H.}\binits{H.}}
(\byear{2006}).
\btitle{Estimation of linear correlation coefficient of two correlated spatial
  processes}.
\bjournal{Sankhy\=a}
\bvolume{68}
\bpages{307--325}.
\bmrnumber{2303086}
\end{barticle}
\endbibitem

\bibitem[\protect\citeauthoryear{Pitman}{1968}]{pitman1968}
\begin{barticle}[author]
\bauthor{\bsnm{Pitman},~\bfnm{EJG}\binits{E.}}
(\byear{1968}).
\btitle{On the behaviour of the characteristic function of a probability
  distribution in the neighbourhood of the origin}.
\bjournal{J. Australian Math. Soc.}
\bvolume{8}
\bpages{423--443}.
\end{barticle}
\endbibitem

\bibitem[\protect\citeauthoryear{Porcu et~al.}{2013}]{Porcu_Daley_Buhmann_2013}
\begin{barticle}[author]
\bauthor{\bsnm{Porcu},~\bfnm{Emilio}\binits{E.}},
  \bauthor{\bsnm{Daley},~\bfnm{Daryl~J}\binits{D.~J.}},
  \bauthor{\bsnm{Buhmann},~\bfnm{Martin}\binits{M.}} \AND
  \bauthor{\bsnm{Bevilacqua},~\bfnm{Moreno}\binits{M.}}
(\byear{2013}).
\btitle{Radial basis functions with compact support for multivariate
  geostatistics}.
\bjournal{Stoch. Environ. Res. Risk. Assess.}
\bvolume{27}
\bpages{909--922}.
\end{barticle}
\endbibitem

\bibitem[\protect\citeauthoryear{Ruiz-Medina and Porcu}{2015}]{Ruiz_Porcu_2015}
\begin{barticle}[author]
\bauthor{\bsnm{Ruiz-Medina},~\bfnm{MD}\binits{M.}} \AND
  \bauthor{\bsnm{Porcu},~\bfnm{E}\binits{E.}}
(\byear{2015}).
\btitle{Equivalence of Gaussian measures of multivariate random fields}.
\bjournal{Stoch. Environ. Res. Risk. Assess.}
\bvolume{29}
\bpages{325--334}.
\end{barticle}
\endbibitem

\bibitem[\protect\citeauthoryear{Stein}{1999}]{Stein_1999Interpo}
\begin{bbook}[author]
\bauthor{\bsnm{Stein},~\bfnm{Michael}\binits{M.}}
(\byear{1999}).
\btitle{{Interpolation of spatial data: some theory for kriging}}.
\bpublisher{Springer}.
\end{bbook}
\endbibitem

\bibitem[\protect\citeauthoryear{Wackernagel}{2003}]{Wackernagel_2003}
\begin{bbook}[author]
\bauthor{\bsnm{Wackernagel},~\bfnm{H.}\binits{H.}}
(\byear{2003}).
\btitle{{Multivariate geostatistics: an introduction with applications. 2nd
  ed.}}
\bpublisher{Springer}.
\end{bbook}
\endbibitem

\bibitem[\protect\citeauthoryear{Xiao}{1995}]{Xiao_1995}
\begin{barticle}[author]
\bauthor{\bsnm{Xiao},~\bfnm{Yi~Min}\binits{Y.~M.}}
(\byear{1995}).
\btitle{Dimension results for {G}aussian vector fields and index-{$\alpha$}
  stable fields}.
\bjournal{Ann. Probab.}
\bvolume{23}
\bpages{273--291}.
\bmrnumber{1330771}
\end{barticle}
\endbibitem

\bibitem[\protect\citeauthoryear{Xiao}{2013}]{xiao2013recent}
\begin{bincollection}[author]
\bauthor{\bsnm{Xiao},~\bfnm{Yimin}\binits{Y.}}
(\byear{2013}).
\btitle{Recent developments on fractal properties of Gaussian random fields}.
In \bbooktitle{Further Developments in Fractals and Related Fields}
\bpages{255--288}.
\bpublisher{Springer}.
\end{bincollection}
\endbibitem

\bibitem[\protect\citeauthoryear{Yaglom}{1987}]{yaglom1987correlation}
\begin{bbook}[author]
\bauthor{\bsnm{Yaglom},~\bfnm{A.~M.}\binits{A.~M.}}
(\byear{1987}).
\btitle{Correlation theory of stationary and related random functions. {V}ol.
  {I}}.
\bseries{Springer Series in Statistics}.
\bpublisher{Springer-Verlag, New York}
\bnote{Basic results}.
\bmrnumber{893393}
\end{bbook}
\endbibitem

\bibitem[\protect\citeauthoryear{Zhang}{2004}]{Zhang_2004}
\begin{barticle}[author]
\bauthor{\bsnm{Zhang},~\bfnm{Hao}\binits{H.}}
(\byear{2004}).
\btitle{Inconsistent estimation and asymptotically equal interpolations in
  model-based geostatistics}.
\bjournal{J. Amer. Statist. Assoc.}
\bvolume{99}
\bpages{250--261}.
\bdoi{10.1198/016214504000000241}
\bmrnumber{2054303}
\end{barticle}
\endbibitem

\bibitem[\protect\citeauthoryear{Zhang and Cai}{2015}]{zhang2015doesn}
\begin{barticle}[author]
\bauthor{\bsnm{Zhang},~\bfnm{Hao}\binits{H.}} \AND
  \bauthor{\bsnm{Cai},~\bfnm{Wenxiang}\binits{W.}}
(\byear{2015}).
\btitle{When doesn't cokriging outperform kriging?}
\bjournal{Statist. Sci.}
\bvolume{30}
\bpages{176--180}.
\bdoi{10.1214/15-STS518}
\bmrnumber{3353100}
\end{barticle}
\endbibitem

\bibitem[\protect\citeauthoryear{Zhu and Stein}{2002}]{Zhu_Stein_2002}
\begin{barticle}[author]
\bauthor{\bsnm{Zhu},~\bfnm{Zhengyuan}\binits{Z.}} \AND
  \bauthor{\bsnm{Stein},~\bfnm{Michael~L.}\binits{M.~L.}}
(\byear{2002}).
\btitle{{Parameter estimation for fractional Brownian surfaces}}.
\bjournal{Statist. Sinica}
\bvolume{12}
\bpages{863--883}.
\bmrnumber{MR1929968}
\end{barticle}
\endbibitem

\end{thebibliography}



\end{document}